\def\todaysdate{31\textsuperscript{st} October 2022}
\newcolumntype{C}[1]{>{\centering\let\newline\\\arraybackslash\hspace{0pt}}m{#1}}
\definecolor{lightblue}{rgb}{0.8,0.8,1}
\numberwithin{equation}{section}
\numberwithin{figure}{section}
\definecolor{vdarkred}{rgb}{0.7,0,0}
\declaretheoremstyle[
  spaceabove=\topsep,
  spacebelow=\topsep,
  headpunct=,
  numbered=no,
  postheadspace=1ex,
  headfont=\color{vdarkred}\normalfont\bfseries,
  bodyfont=\normalfont\itshape,
]{colored}
\declaretheoremstyle[
  spaceabove=\topsep,
  spacebelow=\topsep,
  headpunct=,
  numbered=no,
  postheadspace=1ex,
  headfont=\normalfont\bfseries,
  bodyfont=\normalfont\itshape,
]{italic}
\declaretheoremstyle[
  spaceabove=\topsep,
  spacebelow=\topsep,
  headpunct=,
  numbered=no,
  postheadspace=1ex,
  headfont=\normalfont\bfseries,
  bodyfont=\normalfont\upshape,
]{upright}
\declaretheorem[style=italic,name=Theorem,numbered=yes,numberwithin=section]{thm}
\declaretheorem[style=italic,name=Lemma,numbered=yes,numberlike=thm]{lem}
\declaretheorem[style=italic,name=Proposition,numbered=yes,numberlike=thm]{prop}
\declaretheorem[style=italic,name=Corollary,numbered=yes,numberlike=thm]{coro}
\declaretheorem[style=upright,name=Definition,numbered=yes,numberlike=thm]{defn}
\declaretheorem[style=upright,name=Remark,numbered=yes,numberlike=thm]{rmk}
\declaretheorem[style=upright,name=Notation,numbered=yes,numberlike=thm]{notation}
\renewcommand*{\@seccntformat}[1]{\upshape\csname the#1\endcsname.\hspace{1ex}}
\renewcommand*{\section}{\@startsection{section}{1}{\z@}%
	{2.5ex \@plus 1ex \@minus 0.2ex}%
	{1.5ex \@plus 0.2ex}%
	{\normalfont\normalsize\bfseries}}
\renewcommand*{\subsection}{\@startsection{subsection}{2}{\z@}%
	{2.5ex \@plus 1ex \@minus 0.2ex}%
	{-1.5ex \@plus -0.2ex}%
	{\normalfont\normalsize\bfseries}}
\renewcommand*{\subsubsection}{\@startsection{subsubsection}{3}{\z@}%
	{2.5ex \@plus 1ex \@minus 0.2ex}%
	{-1.5ex \@plus -0.2ex}%
	{\normalfont\normalsize\bfseries}}
\renewcommand*{\paragraph}{\@startsection{paragraph}{4}{\z@}%
	{2.5ex \@plus 1ex \@minus 0.2ex}%
	{-1.5ex \@plus -0.2ex}%
	{\normalfont\normalsize\bfseries}}
\renewcommand*{\subparagraph}{\@startsection{subparagraph}{5}{\z@}%
	{2.5ex \@plus 1ex \@minus 0.2ex}%
	{-1.5ex \@plus -0.2ex}%
	{\normalfont\normalsize\slshape}}
\newcommand{\cs}{\mathscr S}
\newcommand{\ct}{\mathscr T}
\newcommand{\C}{C_{3n,n}}
\newcommand{\Cp}{C_{3n-2,n-1}}
\newcommand{\N}{\mathbb N}
\newcommand{\Z}{\mathbb Z}
\newcommand{\LL}{\mathbb L}
\newcommand{\Aut}{\mathrm{Aut}}
\newcommand{\Conf}{\mathrm{Conf}}
\definecolor{dgreen}{RGB}{0,150,0}
\newcommand{\incl}[3][right]%
{%
\draw[<-,>=#1 hook] #2 to ($ #2!0.5!#3 $);
\draw[->,>=stealth'] ($ #2!0.5!#3 $) to #3;%
}
\newcommand{\inclusion}[5][right]%
{%
\draw[<-,>=#1 hook] #4 to ($ #4!0.5!#5 $) node[#2,font=\small]{#3};
\draw[->,>=stealth'] ($ #4!0.5!#5 $) to #5;%
}
\renewcommand{\geq}{\geqslant}
\renewcommand{\leq}{\leqslant}
\renewcommand{\footnoterule}{%
  \kern -3pt
  \hrule width \textwidth height 0.4pt
  \kern 2.6pt
}
\definecolor{dgreen}{RGB}{0,150,0}
\begin{document}
\title{\vspace{-12mm} \Large\bfseries A globalisation of Jones and Alexander polynomials constructed from a graded intersection of two Lagrangians in a configuration space}
\author{ \small Cristina Anghel \quad $/\!\!/$\quad \todaysdate\vspace{-3ex}}
\date{}
\maketitle
{
\makeatletter
\renewcommand*{\BHFN@OldMakefntext}{}
\makeatother
}
\vspace{-5mm}
\begin{abstract}
 We consider two Laurent polynomials in two variables associated to a braid, given by {\em graded intersections} between {\em fixed Lagrangians in configuration spaces}.
In order to get link invariants, we notice that we have to quotient by a quadratic relation. Then we prove by topological tools that this relation is sufficient and the first graded intersection gives an invariant which is the Jones polynomial. This shows a {\em topological model for the Jones polynomial} and a direct {\em topological proof}\hspace{0.4mm} that it is a well-defined invariant. The other intersection model in the quotient turns out to be an invariant globalising the Jones and Alexander polynomials. This globalisation in the quotient ring is given by a {\em specific interpolation between the Alexander and Jones polynomials}. 
\end{abstract}
{\tableofcontents}

\section{Introduction}\label{introduction}
Jones and Alexander polynomials are two knot invariants which were defined initially by different tools, but can both be described from skein theory and also representation theory of the quantum group $U_q(sl(2))$ (\cite{RT}, \cite{ADO}). However, they differ from the geometric perspective: the Alexander polynomial is well understood in terms of knot complements but there is an important open problem to describe the Jones polynomial by such means. Further on, categorifications for these two invariants provided by Khovanov homology and Heegaard Floer homology proved to be powerful tools, but which have different natures. It is an important problem to provide geometric categorifications for the Jones polynomial and also to relate such theory to knot Floer homology (\cite{Ras}, \cite{D}, \cite{SM}). Bigelow \cite{Big} provided the first topological model for the Jones polynomial, as a graded intersection of submanifolds in configuration spaces, using the homological representations of braid groups introduced by Lawrence \cite{Law}. They used plat closures of braids and proved the invariance of this model for the Jones polynomial using skein relations. 

In \cite{Cr} we constructed a graded intersection pairing in a configuration space, associated to a braid and taking values in the Laurent polynomial ring in two variables, which recovers the (coloured) Jones polynomial and (coloured) Alexander polynomial through specialisations of coefficients to polynomials in one variable. Based on this result, we pose the following question: what is the largest ring in which this topological model provides link invariants? In this paper we show that it is necessary to quotient by a quadratic relation and in this case this construction provides a {\em topological model} for an {\em interpolation between Jones and Alexander polynomials}, constructed in a quotient of the Laurent polynomial ring by a quadratic relation. We work with links seen as braid closures, as opposed to plat closures, of braids.
\vspace{-2mm}
\subsection{Main result} For $n,m \in \N$, we define $C_{n,m}$ to be the unordered configuration space of $m$ points in the $n$-punctured disc $\mathscr D_n$. We will construct two graded intersections in such configuration spaces in the punctured disc: $\Omega(\beta_n), \Omega'(\beta_n) \in \Z[x^{\pm1},d^{\pm 1}]$ for $\beta_n \in B_n$, which will be parametrised by the intersection points between two fixed Lagrangian submanifolds, graded in a certain way. The construction of the Lagrangians is done by fixing a collection of arcs/ circles in the punctured disc, taking their product and considering its image in the quotient to the unordered configuration space (figure \ref{IntroL}). 

In order to answer the problem coming from \cite{Cr}, we compute an example (Section \ref{S:3}) and remark that in order to obtain invariants from these topological models we should quotient the Laurent polynomial ring by a quadratic relation and work in this quotient (denoted by $\LL$). Further on we proceed as follows:
\begin{itemize}
\item[•] We prove by {topological and homological techniques} that the intersection form $\Omega(\beta_n)$ becomes {invariant under the Markov moves in this quotient $\LL$}, so it gives a {well defined link invariant}.
\item[•] Then, we compute these two intersection forms $\Omega'(\beta_n)$ and $\Omega(\beta_n)$ in this quotient.
\end{itemize}
The main results that we obtain are the following:
\begin{itemize}
\item[•] The open intersection form $\Omega'$ becomes an {\em interpolation between Jones and Alexander polynomials} given directly by a {\em graded intersection of two Lagrangians in a configuration space}, over the quotient ring $\LL$.
\item[•] We provide an {\em intrinsic homological construction of the Jones polynomial} and a {purely homological proof that it is a well-defined link invariant} (using the intersection $\Omega$).
\item[•] Also, we obtain a {\em general method for checking invariance under the Markov moves} of constructions based on {\em Lawrence type representations}.
\end{itemize}
\vspace{-4mm}
\subsection{Description of the models}
For the first intersection model, $\Omega(\beta_n)$, we start with $\cs$ and $\ct$ which are the Lagrangian submanifolds given by the collections of red arcs and green circles from the left hand side of figure \ref{IntroL}, in the configuration space of $n$ points in the $(3n)$-punctured disc. The second intersection pairing, $\Omega'(\beta_n)$, is constructed using the Lagrangian submanifolds encoded by the collections of red arcs and green circles from the right hand side of figure \ref{IntroL}, $\cs'$ and $\ct'$, seen in the configuration space of $n-1$ points in the $(3n-2)$-punctured disc.
\begin{figure}[H]
$$ \ \ \ \ \ \ \ \ \ \ \ \ \  \cs,\ct \subseteq \text{Conf}_{n}(\mathscr D_{3n})  \ \ \ \ \ \ \ \ \ \ \ \ \ \ \ \ \ \ \ \ \ \ \ \ \   \ \ \ \ \ \ \ \ \ \cs',\ct'\subseteq \text{Conf}_{n-1}(\mathscr D_{3n-2}) \ \ \ \ \ \ \ \ \ \ $$
\vspace{-5mm}
\centering
\includegraphics[scale=0.305]{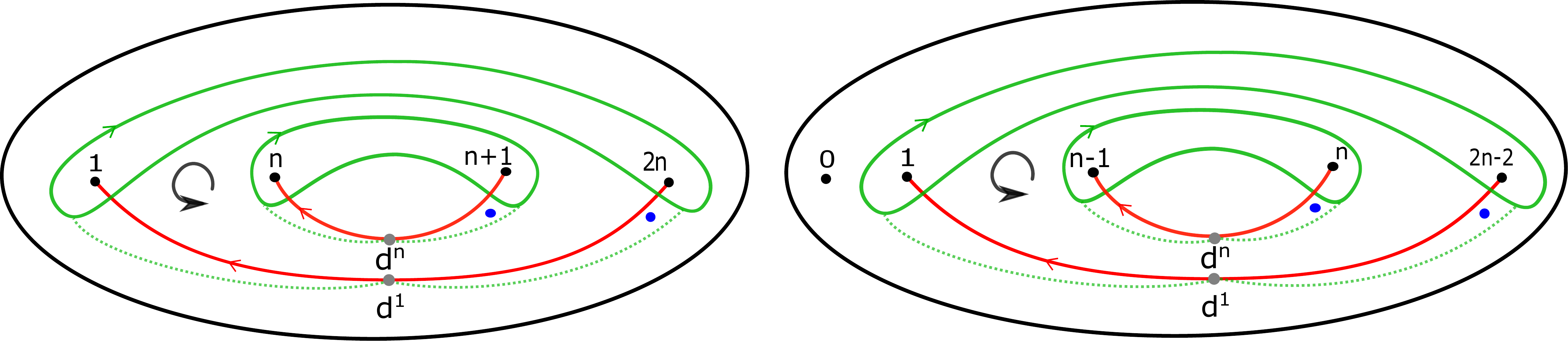}
\vspace{-3mm}
\hspace{-10mm}\caption{Closed intersection $\Omega(\beta_n)$ \hspace{35mm} Open intersection $\Omega'(\beta_n)$ \ \ \ \ \ \ \ \ \ \ \ \ \ \ \ \  }\label{IntroL}
\end{figure}

\vspace{-4mm}
We denote by $\mathbb I_{m}$ the trivial braid with $m$ strands. For the next part, we see the braid groups $B_{3n}$ and $B_{3n-2}$ as the mapping class groups of the $(3n)-$punctured disc and $(3n-2)-$punctured disc respectively. This will lead to two well-defined Lagrangians: $$(\beta_n\cup \mathbb I_{2n}) \ \cs  \subseteq C_{3n,n}; \ \ \ (\beta_n\cup \mathbb I_{2n-2}) \ \cs' \subseteq C_{3n-2,n-1}$$
which are associated to a braid $\beta_n\in B_n$. We consider the sets of intersection points:
\begin{equation}
I_{\beta_n}=(\beta_n \cup \mathbb I_{2n}) \cs\cap \ct; \ \ \ I'_{\beta_n}=(\beta_n \cup \mathbb I_{2n-2}) \cs'\cap \ct'.
\end{equation}
Then, we present two graded intersections, denoted by $\langle  (\beta_n \cup \mathbb I_{2n}) \cs, \ct \rangle$ and $\langle  (\beta_n \cup \mathbb I_{2n-1}) \cs', \ct' \rangle$, which are parametrised by the set of intersection points between the above Lagrangians and graded using a local system, as presented in relation \eqref{int}.

The blue punctures from Figure \ref{IntroL} play an important role in the grading procedure, and from the algebraic perspective they correspond to the quantum trace which is associated to the representation theory of the quantum group $U_q(sl(2))$.
\begin{defn}(Graded intersections)\label{defn} Let us consider the following polynomials:
$$\Omega(\beta_n)(x,d), \Omega'(\beta_n)(x,d) \in \Z[x^{\pm \frac{1}{2}}, d^{\pm 1}],$$
which are defined from graded intersections using the Lagrangian submanifolds from Figure \ref{IntroL}:
\begin{equation}
\begin{aligned}
& \Omega(\beta_n)(x,d):=(d^2x)^{\frac{w(\beta_n)+n}{2}} \cdot d^{-n}\langle  (\beta_n \cup \mathbb I_{2n}) \cs, \ct \rangle\\
& \Omega'(\beta_n)(x,d):=(d^2x)^{\frac{w(\beta_n)+n-1}{2}} \cdot d^{-(n-1)}\langle  (\beta_n \cup \mathbb I_{2n-1}) \cs', \ct' \rangle.
\end{aligned}
\end{equation}
Here, $w(\beta_n)$ is the writhe of the braid $\beta_n$. We call $\Omega(\beta_n)(x,d)$ the graded intersection associated to the closed model and $\Omega'(\beta_n)(x,d)$ the graded intersection corresponding to the open model.
\end{defn}
Let $\tilde{J}(L)$ be the normalised Jones polynomial and $J(L)$ the un-normalised Jones polynomial (Notation \ref{Jones}). In \cite{Cr}, we have proved that the open intersection model recovers the Jones and Alexander polynomials of the closure of the braid, through the following specialisations of coefficients:
\begin{equation}\label{eq:1}
\begin{aligned}
&\Omega'(\beta_n)(x,d)|_{x=d^{-1}}=\tilde{J}(\hat{\beta}_n,x)\\
&\Omega'(\beta_n)|_{d=-1}=\Delta(\hat{\beta}_n,x).
\end{aligned}
\end{equation}
\vspace{-7mm}
\subsection{Invariants in the quotient ring}
Here we start with the problem concerning the invariance of the closed intersection form $\Omega$. Let $\LL:=\Z[x^{\pm \frac{1}{2}}, d^{\pm 1}]/\left( (d+1)(dx-1)\right)$ and consider the quotient morphism:
$$\overline{\phantom{a}} : \Z[x^{\pm \frac{1}{2}}, d^{\pm 1}]\rightarrow \Z[x^{\pm \frac{1}{2}}, d^{\pm 1}]/\left( (d+1)(dx-1)\right).$$
\vspace{-5mm}
\begin{thm}[Invariant in the quotient ring]\label{THEOREM}
The ring $\LL$ is the largest quotient of $\Z[x^{\pm \frac{1}{2}}, d^{\pm 1}]$ such that the image of the intersection form $\Omega(\beta_n)$ in this quotient becomes a link invariant. More precisely, let us denote the image of the graded intersection in this quotient ring by:
\begin{equation}
 \bar{\Omega}(\beta_n)(x,d) \in \Z[x^{\pm\frac{1}{2}}, d^{\pm 1}]/\left((d+1)(dx-1) \right)
\end{equation}
Then $\bar{\Omega}(L)(x,d):=\bar{\Omega}(\beta_n)(x,d)$ is a well defined link invariant for an oriented link $L$ which is the closure of $\beta_n$. 
Also, if $\LL'$ is a quotient of the Laurent polynomial ring in which $\Omega(\beta_n)$ becomes a link invariant then the quotient onto $\LL'$ factors through $\LL$.
\end{thm}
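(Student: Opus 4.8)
The plan is to invoke Markov's theorem: two braids, possibly on different numbers of strands, have isotopic closures if and only if they are related by a finite sequence of Markov moves, namely conjugation $\beta_n \mapsto \gamma\beta_n\gamma^{-1}$ (Markov~I) and positive or negative stabilisation $\beta_n \mapsto \beta_n\sigma_n^{\pm1} \in B_{n+1}$ (Markov~II). Hence $\bar\Omega$ descends to a link invariant precisely when its value is unchanged under both moves, and the theorem splits into a \emph{sufficiency} part (invariance already holds in $\LL$) and a \emph{maximality} part (no strictly larger quotient suffices). Writing $R := \Z[x^{\pm\frac12}, d^{\pm1}]$, let $\cI \subseteq R$ be the ideal generated by all differences $\Omega(\beta') - \Omega(\beta)$ as $\beta,\beta'$ range over pairs related by a single Markov move. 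Then $\Omega$ becomes invariant in a quotient $R/J$ if and only if $\cI \subseteq J$, so the whole theorem is equivalent to the single identity $\cI = \big((d+1)(dx-1)\big)$: the inclusion $\cI \subseteq \big((d+1)(dx-1)\big)$ gives well-definedness in $\LL$, and the reverse inclusion gives maximality.

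The first step is to dispose of Markov~I. Conjugation by $\gamma \in B_n$ corresponds, in the $(3n)$-punctured disc, to applying the diffeomorphism induced by $\gamma\cup\mathbb I_{2n}$ to the braided Lagrangian and simultaneously to $\ct$; since a graded intersection of two Lagrangians is unchanged when an ambient diffeomorphism is applied to both, and since the writhe and the $d$-normalisation in Definition~\ref{defn} are conjugation-invariant, one gets $\Omega(\gamma\beta_n\gamma^{-1}) = \Omega(\beta_n)$ already in $R$. Thus conjugation contributes nothing to $\cI$, and the entire content is carried by stabilisation.

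The heart of the argument, and the main obstacle, is the Markov~II computation. Stabilisation raises $n$ by one, so the configuration space passes from $C_{3n,n}$ to $C_{3n+3,\,n+1}$ and the Lagrangians $\cs,\ct$ acquire one extra arc and one extra circle around the three new punctures. I would identify the points of $(\beta_n\sigma_n^{\pm1}\cup\mathbb I_{2n+2})\cs \cap \ct$ with the points of $(\beta_n\cup\mathbb I_{2n})\cs\cap\ct$ together with a controlled family of \emph{new} intersection points localised near the added strand, and then compute the local contribution of these new points to the graded intersection. The grading of each point is read off from winding numbers through the local system, producing monomials in $x$ and $d$; the claim to establish is that, after the writhe renormalisation, the new points contribute a factor whose deviation from $1$ is divisible by $(d+1)(dx-1)$, yielding $\Omega(\beta_n\sigma_n^{\pm1}) - \Omega(\beta_n) \in \big((d+1)(dx-1)\big)$. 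The delicate point is to show that the local model near the stabilising strand is independent of $\beta_n$, so that the computation reduces to a single universal local picture whose contribution can be evaluated once and for all; this is where the real work lies. This yields $\cI \subseteq \big((d+1)(dx-1)\big)$ and hence the sufficiency half of the theorem.

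For maximality I would exhibit a single braid whose stabilisation obstruction is a \emph{unit multiple} of $(d+1)(dx-1)$, thereby showing this polynomial actually lies in $\cI$. The explicit computation of Section~\ref{S:3} is tailored to this: it evaluates $\Omega$ on a small example before and after a stabilisation and shows the difference equals $(d+1)(dx-1)$ up to an invertible monomial. Combined with the inclusion of the previous paragraph, this gives $\cI = \big((d+1)(dx-1)\big)$. Consequently, if $\LL'$ is any quotient of $R$ in which $\Omega$ becomes a link invariant, then $\cI \subseteq \ker(R \to \LL')$, so $(d+1)(dx-1)$ maps to $0$ in $\LL'$ and the quotient map factors through $\LL$, which is exactly the asserted maximality.
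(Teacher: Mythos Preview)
Your overall framework via Markov's theorem and the obstruction ideal $\cI$ is sound, and your treatment of maximality (exhibiting the relation $(d+1)(dx-1)$ via the unknot versus stabilised unknot, exactly as in Section~\ref{S:3}) matches the paper. The gap is in your handling of Markov~I.

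You claim conjugation invariance already holds in $R$ because ``a graded intersection of two Lagrangians is unchanged when an ambient diffeomorphism is applied to both.'' But conjugation does \emph{not} correspond to applying the same diffeomorphism to both Lagrangians. The second Lagrangian $\ct$ is fixed, independent of the braid; replacing $(\beta_n\cup\mathbb I_{2n})\cs$ by $(\gamma\beta_n\gamma^{-1}\cup\mathbb I_{2n})\cs$ while leaving $\ct$ alone is not the same as acting by some braid on both, and neither $\cs$ nor $\ct$ is invariant under the $B_n$-action. Even granting diffeomorphism invariance of the graded pairing (itself not obvious, since the grading uses a fixed base point and local system), applying $(\gamma^{-1}\cup\mathbb I_{2n})$ to both sides of $\langle(\gamma\beta_n\gamma^{-1}\cup\mathbb I_{2n})\cs,\ct\rangle$ yields $\langle(\beta_n\gamma^{-1}\cup\mathbb I_{2n})\cs,(\gamma^{-1}\cup\mathbb I_{2n})\ct\rangle$, which has no reason to equal $\langle(\beta_n\cup\mathbb I_{2n})\cs,\ct\rangle$.

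The paper's route is quite different and uses the quotient relation essentially. It first rewrites $\bar\Omega(\beta_n)$ via the state sum of Corollary~\ref{CFstate} as a weighted sum of pairings $\lll(\beta_n\cup\mathbb I_n)\mathscr F'_{\bar i},\mathscr L'_{\bar i}\ggg$ over multiplicity-free indices $\bar i$, and then shows this equals $\sum_{m} d^{-m}\,\mathrm{tr}\bigl(L^1_{n,m}(\beta_n)\bigr)$ for a braid \emph{sub}-representation $L^1_{n,m}$ on the span $H^1_{n,m}$ of multiplicity-free classes. The key lemma is that this span is preserved by the braid action only after imposing $(d+1)(dx-1)=0$: an explicit computation gives $\sigma\,\mathscr U'_{1,1}=(1-x^{-1}+d)\,\mathscr U'_{1,1}+(1+d)(1-x^{-1}d^{-1})\,\mathscr U'_{0,2}$ in $H_{2,2}$, and the coefficient on the multiplicity-two class $\mathscr U'_{0,2}$ vanishes precisely in $\LL$. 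Once the sub-representation exists, conjugation invariance follows from trace invariance. So the quadratic relation is needed for \emph{both} Markov moves, not only for stabilisation as your outline assumes.
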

The proof of this result is topological. We use homological tools in order to prove that  $\bar{\Omega}(\beta_n)$ is invariant under the two Markov moves. 
\vspace{-2mm}
\subsection{The intersection forms recover the Jones and Alexander polynomials}
In the next part we want to understand this invariant, which takes values in the quotient of the Laurent polynomial ring. First, we prove that it has two specialisations, one of which recoveres the Jones polynomial and the other one which vanishes. 
\begin{thm}\label{Tsk}
 The closed intersection form $\bar{\Omega}(L)$ specialises to the un-normalised Jones polynomial and vanishes for the specialisation of coefficients associated to roots of unity:
\begin{equation}\label{eqC:3}
\begin{aligned}
&\bar{\Omega}(L)|_{x=q^{2}, d=q^{-2}}=J(L,q)\\
&\bar{\Omega}(L)|_{d=-1}=0. 
\end{aligned}
\end{equation}
\end{thm}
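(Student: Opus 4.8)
The plan is to deduce both identities from the already-established specialisations of the \emph{open} model recorded in \eqref{eq:1}, by first proving an explicit comparison between the closed and open graded intersections. Concretely, I would aim to establish a comparison formula in $\LL$ of the shape
\[
\bar\Omega(\beta_n)(x,d)=x^{\frac12}(1+d)\,\bar\Omega'(\beta_n)(x,d),
\]
possibly up to an explicit unit, which exhibits the closed intersection as the \emph{quantum-dimension factor} $x^{\frac12}(1+d)$ times the open one. The point is that this factor is the homological avatar of the quantum dimension of the fundamental $U_q(sl(2))$-module: on the locus $dx=1$ it specialises to $[2]=q+q^{-1}$, while on the locus $d=-1$ it vanishes --- exactly the two branches cut out by the quadratic relation $(d+1)(dx-1)$ defining $\LL$. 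Granting the formula, the theorem is immediate.

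To prove the comparison formula I would compare the two configuration-space pictures of Figure \ref{IntroL} directly. Passing from the open model in $\Conf_{n-1}(\mathscr D_{3n-2})$ to the closed model in $\Conf_{n}(\mathscr D_{3n})$ adds exactly one configuration point and two punctures, which together realise the closure of the last strand; the braid $\beta_n$ is supported away from this region, so the mapping-class action does not mix it with the rest. Consequently the intersection set $I_{\beta_n}$ fibres over $I'_{\beta_n}$: each open intersection point extends, and the new configuration point may occupy precisely the two local intersection points of the added red arc with the added green circle. I would then compute the local grading contributed by these two points under the local system defining $\langle\,,\,\rangle$; the two points differ by one unit of winding, so they contribute the monomials $1$ and $d$, whose sum is $(1+d)$. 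Finally the normalising prefactors of Definition \ref{defn} differ by $(d^2x)^{1/2}d^{-1}=x^{1/2}$ (the writhe $w(\beta_n)$ being common to both), and multiplying the two contributions yields $x^{\frac12}(1+d)$.

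With the comparison formula in hand, both specialisations follow from \eqref{eq:1}. On $x=q^2,\,d=q^{-2}$ one has $dx=1$, so $\bar\Omega'$ specialises to the normalised Jones polynomial $\tilde J$ by \eqref{eq:1}, while $x^{\frac12}(1+d)=q(1+q^{-2})=q+q^{-1}=[2]$; hence $\bar\Omega=[2]\,\tilde J=J(L,q)$ by the standard relation $J=[2]\tilde J$ between the two normalisations. On $d=-1$ the factor $x^{\frac12}(1+d)$ vanishes identically, so $\bar\Omega|_{d=-1}=0$ irrespective of the finite value $\bar\Omega'|_{d=-1}=\Delta(L,x)$. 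Conceptually, this second identity records the familiar fact that the Alexander regime $d=-1$ is precisely where the quantum dimension degenerates to zero, which is why the closed (full-trace) model vanishes there while the open (modified-trace) model still detects $\Delta$.

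The main obstacle is the comparison formula, and within it the precise bookkeeping of the local system on the two extra intersection points: one must verify that closing the last strand contributes exactly the powers $1$ and $d$ with no stray monomial, and that this survives the writhe-dependent normalisation unchanged. A secondary subtlety is that the identity is asserted in the quotient $\LL$ rather than in $\Z[x^{\pm\frac12},d^{\pm1}]$, so I would check compatibility at the singular point $(d,x)=(-1,-1)$ where the components $\{d=-1\}$ and $\{dx=1\}$ of $\mathrm{Spec}\,\LL$ meet, ensuring the two evaluations genuinely come from a single class $\bar\Omega\in\LL$ and are consistent with the well-definedness of $\bar\Omega$ established in Theorem \ref{THEOREM}.
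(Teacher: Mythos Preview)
Your approach is genuinely different from the paper's, and the comparison formula $\bar\Omega(\beta_n)=x^{\frac12}(1+d)\,\bar\Omega'(\beta_n)$ that you aim for is indeed true in $\LL$ --- it is an immediate consequence of Theorems \ref{THEOREM3} and \ref{THEOREM2}. The problem is that in the paper's logical order those two theorems are \emph{deduced from} Theorems \ref{Tsk} and \ref{Tsk'}, so invoking them here would be circular, and your proposed direct geometric proof of the comparison does not go through.

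The gap is in the sentence ``the braid $\beta_n$ is supported away from this region, so the mapping-class action does not mix it with the rest.'' The strand that distinguishes the closed from the open picture is one of the strands $1,\dots,n$ on which $\beta_n$ genuinely acts; the extra red arc has one endpoint on that strand, so after applying $\beta_n$ it is entangled with the remaining arcs and the intersection set $I_{\beta_n}$ does \emph{not} fibre over $I'_{\beta_n}$. In fact the identity $\Omega(\beta_n)=x^{\frac12}(1+d)\,\Omega'(\beta_n)$ is \emph{false} in $\Z[x^{\pm\frac12},d^{\pm1}]$ before passing to the quotient. One sees this already on the stabilised unknot: from Section \ref{S:3}, $\Omega(\sigma)=x^{\frac12}(1+d)(d^2x-d+1)$, so your factorisation would force $\Omega'(\sigma)=d^2x-d+1$; but then $\Omega'(\sigma)|_{d=-1}=x+2$, contradicting $\Omega'(\sigma)|_{d=-1}=\Delta(\mathscr U)=1$ from \eqref{eq:1}. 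Thus the comparison formula only becomes true \emph{after} imposing $(d+1)(dx-1)=0$, and any proof of it must use that relation in an essential way --- your sketch gives no indication of how. What you dismiss as ``bookkeeping'' is exactly the substance of the argument.

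By contrast, the paper proves Theorem \ref{Tsk} in Section \ref{S:7} without any comparison to $\bar\Omega'$. Using the state-sum description (Corollary \ref{CFstate}) and the same localisation trick as in the Markov~II proof, the verification of the skein relation is reduced to four elementary cases on two strands (Table \ref{Skein1}); one then checks that $\bar\Omega|_{\psi_J}$ and $\bar\Omega|_{\psi_\Delta}$ satisfy the Jones and Alexander skein relations respectively. The unknot value $\bar\Omega(\mathscr U)=x^{\frac12}(1+d)$ from Section \ref{S:3} then fixes $\bar\Omega|_{\psi_J}(\mathscr U)=x^{\frac12}+x^{-\frac12}$ (giving the un-normalised $J$) and $\bar\Omega|_{\psi_\Delta}(\mathscr U)=0$ (forcing $\bar\Omega|_{d=-1}\equiv 0$ by the skein relation). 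The comparison with $\bar\Omega'$ that you wanted is only established afterwards, in Section \ref{S:5}, as a corollary.
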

The proof of this models is also topological. Using homological representations, we show that it is enough to verify two particular skein type relations and we check this by intersecting curves in the punctured disc. 

Secondly, we also prove that the two specialisations of the open intersection model $\bar{\Omega}'$ satisfy the the skein relations characterising the Jones and Alexander polynomials. 
\begin{thm}\label{Tsk'}
 The open intersection form $\bar{\Omega}'(\beta_n)$ specialises to the normalised Jones polynomial and the Alexander polynomial of the closure of the braid, as below:
\begin{equation}\label{eqC:3'}
\begin{aligned}
&\bar{\Omega}'(\beta_n)|_{x=q^{2};d=q^{-2}}=\tilde{J}(L,q)\\
&\bar{\Omega}'(\beta_n)|_{d=-1}=\Delta(L,x). 
\end{aligned}
\end{equation}
\end{thm}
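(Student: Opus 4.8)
The plan is to deduce both identities straight from the specialisation formulas \eqref{eq:1} proved in \cite{Cr}, the only new observation being that the two substitutions in the statement are exactly the loci on which the two linear factors of the defining relation of $\LL$ vanish. Write $R:=\Z[x^{\pm\frac{1}{2}},d^{\pm1}]$, so that $\LL=R/\big((d+1)(dx-1)\big)$ with quotient morphism $\overline{\phantom{a}}:R\to\LL$ and $\bar{\Omega}'(\beta_n)=\overline{\Omega'(\beta_n)}$. The quadratic relation is the product of the two linear terms $d+1$ and $dx-1$, and each specialisation kills precisely one of them: the substitution $x=q^2,\,d=q^{-2}$ yields $dx-1=q^{-2}q^{2}-1=0$, which is the locus $x=d^{-1}$, whereas $d=-1$ yields $d+1=0$.

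First I would check that both specialisations factor through the quotient. The assignment $x^{1/2}\mapsto q,\ d\mapsto q^{-2}$ defines a ring homomorphism $\varphi_J:R\to\Z[q^{\pm1}]$ with $\varphi_J\big((d+1)(dx-1)\big)=0$, so it descends uniquely to $\bar{\varphi}_J:\LL\to\Z[q^{\pm1}]$ along $\overline{\phantom{a}}$; similarly $x^{1/2}\mapsto x^{1/2},\ d\mapsto -1$ defines $\varphi_\Delta:R\to\Z[x^{\pm\frac12}]$ annihilating the relation and descending to $\bar{\varphi}_\Delta:\LL\to\Z[x^{\pm\frac12}]$. Because $\bar{\varphi}_J\circ\overline{\phantom{a}}=\varphi_J$ and $\bar{\varphi}_\Delta\circ\overline{\phantom{a}}=\varphi_\Delta$, evaluating the class $\bar{\Omega}'(\beta_n)$ under either descended map returns the evaluation of its lift, i.e.
\[
\bar{\Omega}'(\beta_n)\big|_{x=q^2,d=q^{-2}}=\Omega'(\beta_n)\big|_{x=q^2,d=q^{-2}},\qquad \bar{\Omega}'(\beta_n)\big|_{d=-1}=\Omega'(\beta_n)\big|_{d=-1}.
\]
This is where well-definedness in $\LL$ is actually used, and it is immediate only because the ideal is principal with this particular factorisation.

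It then remains to read off the right-hand sides from \eqref{eq:1}. On the first locus the constraint $dx=1$ is exactly $x=d^{-1}$, so $\Omega'(\beta_n)|_{x=d^{-1}}=\tilde J(\hat{\beta}_n,x)$, and substituting $x=q^{2}$, together with $L=\hat{\beta}_n$ and the variable convention $x=q^{2}$ of Notation \ref{Jones}, identifies the value with $\tilde J(L,q)$; on the second locus \eqref{eq:1} gives $\Omega'(\beta_n)|_{d=-1}=\Delta(\hat{\beta}_n,x)=\Delta(L,x)$. I expect no genuine obstacle here, since all the substantive content already resides in \eqref{eq:1}. The only point requiring care is the bookkeeping of conventions: matching the single-variable normalisation $\tilde J(\hat{\beta}_n,x)$ of \cite{Cr} with $\tilde J(L,q)$ under $x=q^{2}$, and confirming that the prefactor $(d^2x)^{(w(\beta_n)+n-1)/2}$ of Definition \ref{defn} specialises consistently (on the Jones locus $d^2x=q^{-2}$, so this prefactor is the integral power $q^{-(w(\beta_n)+n-1)}$ and no spurious half-power of $q$ survives).
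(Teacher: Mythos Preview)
Your argument is correct: since both substitutions kill the generator $(d+1)(dx-1)$ of the ideal defining $\LL$, the evaluations of $\bar{\Omega}'(\beta_n)$ coincide with those of its lift $\Omega'(\beta_n)$, and then \eqref{eq:1} from \cite{Cr} gives the result immediately. The bookkeeping you flag (matching $x=d^{-1}$ with $x=q^2,\,d=q^{-2}$, and the variable convention $x=q^2$ for $\tilde J$) is the only thing to check, and it goes through.

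However, the paper deliberately takes a different route. In Section~\ref{S:7} the identities of Theorem~\ref{Tsk'} are established by verifying directly, via a topological computation of graded intersections in the punctured disc, that the specialisations $\bar{\Omega}'|_{\psi_J}$ and $\bar{\Omega}'|_{\psi_\Delta}$ satisfy the characterising skein relations for $\tilde J$ and $\Delta$ (Tables~\ref{Skein1}--\ref{Skein3}), together with the correct value on the unknot. The only input taken from \cite{Cr} is the conjugacy invariance \eqref{eq:conj} of these two specialisations, not the full identification \eqref{eq:1}. The paper states this intent explicitly: ``This makes this paper self-contained and independent of all identifications from \cite{Cr} (presented in equation \eqref{eq:1}) except the assumption of the conjugacy invariance of the specialisations of the open model $\Omega'$.'' So your proof, while valid, bypasses exactly the content the paper is trying to supply: an intrinsic topological argument (reducing to a local check on two strands, then reading off the four cases I--IV from Figure~\ref{Skein}) that the intersection model obeys the skein relations. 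Your approach buys brevity; the paper's buys a direct geometric verification of the skein relations and near-independence from the algebraic identifications of \cite{Cr}, which is one of the paper's advertised goals.
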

For this, we start from the fact that these specialisations are conjugacy invariants, which comes from \cite{Cr}. Then we prove directly by the same topological techniques as the ones used for $\bar{\Omega}$ that they satisfy the skein relations. This makes this paper self-contained and independent of all identifications from \cite{Cr} (presented in equation \eqref{eq:1}) except the assumption of the conjugacy invariance of the specialisations of the open model $\Omega'$.
\vspace{-4mm}
\subsection{The explicit form of the invariants in the quotient ring} Now we want to understand what the two intersection models in the quotient ring are. We use algebraic arguments and, starting from the fact that the two models recover the Jones and Alexander polynomials, we conclude that in the quotient ring they have to be interpolations between these two invariants, as below.
\begin{thm}[The closed intersection model as the Jones polynomial]\label{THEOREM3} The closed model has the following form in the quotient ring $\LL$:
\begin{equation}
\bar{\Omega}(L)(x,d)=x^{\frac{1}{2}}(d+1)\cdot \tilde{J}(L)(x).
\end{equation}
\end{thm}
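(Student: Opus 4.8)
The plan is to deduce this statement from the two specialisations already established in Theorem~\ref{Tsk}, together with an analysis of the module structure of the quotient ring $\LL$; the topological content has been spent on Theorems~\ref{THEOREM} and~\ref{Tsk}, so what remains is essentially linear algebra over $\Z[x^{\pm\frac12}]$. First I would pin down the structure of $\LL$. Expanding the defining relation gives
\[
(d+1)(dx-1)=x\,d^2+(x-1)d-1,
\]
so in $\LL$ one has $x\,d^2=(1-x)d+1$; since $x$ is a unit this expresses $d^2$ as a $\Z[x^{\pm\frac12}]$-linear combination of $1$ and $d$. Moreover $d(xd+x-1)=1$ shows $d^{-1}=xd+x-1$, so all powers $d^{\pm k}$ reduce as well. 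Because the relation is monic in $d$ up to the unit $x$, the ring $\LL$ is free of rank $2$ as a $\Z[x^{\pm\frac12}]$-module with basis $\{1,d\}$, and I may write uniquely
\[
\bar{\Omega}(L)(x,d)=a(x)+b(x)\,d,\qquad a,b\in\Z[x^{\pm\frac12}].
\]

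Next I would exploit that the defining polynomial factors, so that $d=-1$ and $dx=1$ each descend to a ring homomorphism out of $\LL$ (each kills one factor of $(d+1)(dx-1)$). Applying $d=-1$ and using $\bar{\Omega}(L)|_{d=-1}=0$ from Theorem~\ref{Tsk} gives $a(x)-b(x)=0$, hence $a=b$ and
\[
\bar{\Omega}(L)(x,d)=a(x)\,(d+1).
\]
It then remains only to identify the scalar $a(x)$, and for this I use the other branch. The Jones specialisation $x=q^2,\ d=q^{-2}$ satisfies $dx=1$, so it is precisely the evaluation on the second factor. Substituting into $\bar{\Omega}(L)=a(x)(d+1)$ and invoking Theorem~\ref{Tsk} with Notation~\ref{Jones}, together with the normalisation $J(L,q)=(q+q^{-1})\tilde{J}(L)(q^2)$, yields
\[
a(q^2)\,(q^{-2}+1)=J(L,q)=(q+q^{-1})\,\tilde{J}(L)(q^2).
\]
Writing $q^{-2}+1=q^{-2}(q^2+1)$ and $q+q^{-1}=q^{-1}(q^2+1)$ and cancelling the common factor $q^2+1$ in the domain $\Z[q^{\pm1}]$ gives $a(q^2)=q\,\tilde{J}(L)(q^2)$. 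Since the assignment $x^{\frac12}\mapsto q$ is an isomorphism $\Z[x^{\pm\frac12}]\xrightarrow{\ \sim\ }\Z[q^{\pm1}]$, this determines $a(x)=x^{\frac12}\,\tilde{J}(L)(x)$, and therefore $\bar{\Omega}(L)(x,d)=x^{\frac12}(d+1)\,\tilde{J}(L)(x)$, as claimed.

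The genuinely delicate points here are ring-theoretic rather than geometric. One must verify that $\{1,d\}$ is a free basis (equivalently, that $1$ and $d$ are $\Z[x^{\pm\frac12}]$-independent modulo $(d+1)(dx-1)$, which is where the unit leading coefficient $x$ is used) and that the two evaluations $d=-1$ and $dx=1$ really descend to $\LL$. Once these foundations are laid, the argument is forced: the first specialisation collapses $\bar{\Omega}(L)$ onto the line $\Z[x^{\pm\frac12}]\cdot(d+1)$, removing the ambiguity between the two basis vectors, and the second reads off the remaining scalar. I expect the only place demanding real care to be the bookkeeping relating $J$ and $\tilde{J}$ and the half-integer powers of $x$ — in particular confirming that $x^{\frac12}\tilde{J}(L)(x)$ genuinely lies in $\Z[x^{\pm\frac12}]$, so that the asserted identity holds on the nose in $\LL$ and not merely after tensoring with the fraction field.
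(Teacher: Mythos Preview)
Your proof is correct and follows essentially the same route as the paper: use the vanishing at $d=-1$ to factor out $(d+1)$, then use the Jones specialisation $d=x^{-1}$ to pin down the remaining $\Z[x^{\pm\frac12}]$-scalar as $x^{\frac12}\tilde{J}(L)(x)$. The only cosmetic difference is that you make the rank-$2$ module structure of $\LL$ explicit via the basis $\{1,d\}$ and solve for the coordinates, whereas the paper phrases the same step as an ideal-theoretic factorisation $\bar\Omega=(d+1)B$ followed by determining $B$ modulo $(xd-1)$; the content is identical.
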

\begin{thm}[The open intersection model interpolates between the Jones and Alexander polynomials]\label{THEOREM2}
The open model is a well defined invariant in the quotient, which is given by:
\begin{equation}
\bar{\Omega}'(L)(x,d)=\Delta(L)(x)+(d+1) \cdot \frac{\tilde{J}(L)(x)-\Delta(L)(x)}{(x^{-1}+1)} \text{  in  } \LL.
\end{equation}
\end{thm}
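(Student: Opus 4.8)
The plan is to derive the explicit form of $\bar\Omega'(L)$ purely algebraically from the two specialisations established in Theorem~\ref{Tsk'}, together with the structure of the quotient ring $\LL$. The key observation is that in $\LL$ the relation $(d+1)(dx-1)=0$ holds, so the two specialisations $d=-1$ and $dx=1$ (i.e.\ $d=x^{-1}$, equivalently $x=d^{-1}$) are exactly the two factors being set to zero, and they are complementary in the sense that any element of $\LL$ is determined by its images under these two evaluations provided the images are compatible. First I would set up the Chinese-Remainder-type decomposition: since $(d+1)$ and $(dx-1)$ generate comaximal ideals in $\Z[x^{\pm\frac12},d^{\pm1}]$ after inverting the relevant difference, the map $\LL \to \LL/(d+1)\times \LL/(dx-1)$ is injective, so an element of $\LL$ is pinned down by its reductions modulo each factor.

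Next I would make the ansatz that $\bar\Omega'(L)$ can be written in the form
\begin{equation}
\bar\Omega'(L)(x,d)=\Delta(L)(x)+(d+1)\cdot G(L)(x)
\end{equation}
for some Laurent polynomial $G(L)(x)$ to be determined. The motivation is that the reduction modulo $d+1$ must recover the Alexander polynomial: setting $d=-1$ kills the second term and leaves $\Delta(L)(x)$, matching the second line of \eqref{eqC:3'}. This fixes the constant term of the ansatz. Then I would impose the other specialisation: reducing modulo $(dx-1)$, i.e.\ substituting $x=d^{-1}$ (so $d+1 \mapsto x^{-1}+1$ after expressing $d=x^{-1}$), the first line of \eqref{eqC:3'} forces
\begin{equation}
\tilde J(L)(x)=\Delta(L)(x)+(x^{-1}+1)\cdot G(L)(x).
\end{equation}
Solving this linear equation for $G(L)(x)$ yields $G(L)(x)=\bigl(\tilde J(L)(x)-\Delta(L)(x)\bigr)/(x^{-1}+1)$, which is precisely the claimed coefficient.

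The main obstacle, and the step requiring genuine care, is justifying that $\bar\Omega'(L)$ \emph{actually has} the proposed shape $\Delta+(d+1)G$ with $G$ a function of $x$ alone, rather than merely agreeing with this expression under the two specialisations. I would address this by analysing the ring $\LL$ explicitly: every class in $\LL$ has a canonical representative that is at most linear in $d$ on each side of the relation, because the quadratic relation $(d+1)(dx-1)=xd^2+(x-1)d-1=0$ lets one reduce the $d$-degree, so one can write any element as $a(x)+d\,b(x)$ modulo the relation (with coefficients in $\Z[x^{\pm\frac12}]$ suitably localised). I must then verify that the coefficient $G(L)(x)$ is genuinely a Laurent polynomial, i.e.\ that $(x^{-1}+1)$ divides $\tilde J(L)(x)-\Delta(L)(x)$; this divisibility is the substantive arithmetic input and I would check it via the behaviour of both invariants at $x=-1$, where the normalised Jones and Alexander polynomials are known to coincide (both equal the determinant of the link up to a standard sign convention), so their difference vanishes at $x=-1$ and is therefore divisible by $(x+1)$, hence by $(x^{-1}+1)$ after clearing the unit $x^{-1}$. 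Granting this, the element $\Delta+(d+1)G$ is a well-defined class in $\LL$ whose two specialisations match those of $\bar\Omega'(L)$; by the injectivity of the Chinese-Remainder map the two classes coincide, which completes the proof.
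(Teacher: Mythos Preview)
Your argument is correct and follows essentially the same route as the paper: both determine $\bar\Omega'(L)$ from its two specialisations $d=-1$ and $d=x^{-1}$, exploiting that $(d+1)(dx-1)=0$ in $\LL$. The paper phrases this as a direct factoring rather than a Chinese-Remainder statement: since $\bar\Omega'(\beta_n)-\Delta(L)$ vanishes at $d=-1$, it equals $(d+1)A(x,d)$ for some $A\in\LL$; evaluating at $d=x^{-1}$ then forces $A|_{d=x^{-1}}=(\tilde J-\Delta)/(x^{-1}+1)$, and the residual ambiguity $A'(x,d)(dx-1)$ is annihilated upon multiplying by $(d+1)$.

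The one practical difference is that in the paper's version the divisibility of $\tilde J(L)-\Delta(L)$ by $(x^{-1}+1)$ is automatic: it is simply the value $A|_{d=x^{-1}}$ of an element already known to live in $\LL$, so no separate determinant argument at $x=-1$ is needed. Your CRT framing is equally valid but makes you verify this divisibility by hand; conversely, it has the virtue of making explicit that the two specialisations genuinely determine the class (injectivity of $\LL\to\LL/(d+1)\times\LL/(dx-1)$, which follows from $(d+1)$ and $(dx-1)$ being non-associate irreducibles in the UFD $\Z[x^{\pm\frac12},d^{\pm1}]$). Note also that comaximality is not needed for this injectivity, only that the intersection of the two principal ideals equals their product.
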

\vspace{-4mm}
\subsection{Further work} 

\

{\bf Categorifications} One motivation for this research direction concerns the description of geometrical type categorifications for Jones and Alexander polynomials and relations between them (such as spectral sequences \cite{Ras}, \cite{D}, \cite{SM}). We expect that there is a categorification procedure from the specialised open model $\bar{\Omega}'(L)_{d=-1}$ which gives knot Floer homology (for knots, the geometric supports of the Lagrangians from our picture are Heegaard diagrams). We are interested in studying this machinery directly at the interpolation level $\bar{\Omega}'(L)$ (over $\LL$), where we have a grading for the intersection points given by two variables. In particular, we are interested in investigating the grading for this interpolation model, which is geometrically described as in section \ref{S:1}, and relating this to the grading from the Floer homology picture and certain gradings for possible geometrical categorifications for the Jones polynomial. 

{\bf Twisted invariants from Lawrence representations} This work is also part of a wider joint project with Fathi Ben Aribi (\cite{Fathi}) where we aim to define twisted invariants for knots starting from twisted Lawrence type representations. In section \ref{S:3} we see that the intersection form $\bar{\Omega}$ is given also by a {\em sum of traces of Lawrence representations} and we prove that this is in turn {\em invariant under Markov moves}. This provids a {\em method for checking Markov moves on constructions defined using Lawrence type representations}. This method is a starting point in this joint work, where we aim to check Markov moves for twisted versions of Lawrence representations.  
\vspace{-3mm}
\subsection*{Structure of the paper}
In Section \ref{S:1} we present the grading procedure and the construction of the two intersection models in the configuration space. In Section \ref{S:3}, we compute the closed model $\Omega$ for the unknot and stabilised unknot and deduce that in order to have invariance we need a quadratic relation. Then, in Section \ref{S:2} we discuss the relation between these intersection models and two state sums of Lagrangian intersections, defined using pairings between Lawrence representations, from \cite{Cr2}. Section \ref{S:4} is devoted to the proof of the invariance of the closed intersection form $\bar{\Omega}$ under the Markov moves (in the quotient ring). After that, in Section \ref{S:7}, we prove topologically that the specialisations of the two intersection forms $\bar{\Omega}$ and $\bar{\Omega}'$ satisfy appropriate skein relations and so they are the Jones and Alexander polynomials. In Section \ref{S:5} we show that these two models become interpolations of Jones and Alexander polynomials in the quotient. In the last section we compute the open intersection model for the trefoil knot and check that it is given by the above interpolation. 
\vspace*{-3mm}
\subsection{Acknowledgements} I would like to thank Rinat Kashaev very much for discussions concerning the form of these intersection models in the quotient ring and the conclusion that they are the above interpolations between Jones and Alexander polynomials. I would also like to thank Emmanuel Wagner for useful discussions regarding the first version of the paper. I acknowledge the support of SwissMAP, a National Centre of Competence in Research funded by the Swiss National Science Foundation.

\section{Notations} We start with the quotient ring:
\begin{equation}
\LL=\Z[x^{\pm \frac{1}{2}}, d^{\pm 1}]/\left( (d+1)(dx-1)\right).
\end{equation}
\begin{rmk}
Looking at $\LL$ as an algebra over $\Z[x^{\pm\frac{1}{2}}]$, we have a basis given by: $$\{d^{i} \mid 0\leq i\leq 1\}.$$ In other words, the powers of $d$ which are bigger than $2$ can be expressed in terms of the above basis. For example, we have:
\begin{equation}\label{relations}
\begin{aligned}
&d^{2}=x^{-1}d-d+x^{-1}\\
&d^3=(x^{-2}-x^{-1}+1)d+x^2-x^{-1}.
\end{aligned}
\end{equation}
\end{rmk}

\begin{notation}\label{ind}
For a set of indices $\bar{i}=(i_1,...,i_n)$ where $i_1,...,i_n\in \{0,1\}$ we denote the symmetric set of indices by:
\begin{equation}
1-\bar{i}:=(1-i_n,...,1-i_1).
\end{equation} 
Also, we will change the coefficients for certain modules, using the following definition.
\begin{notation}\label{N:spec} 
Let $R$ be a ring and consider $M$ an $R$-module which has a basis $\mathscr B$.
We consider $S$ to be another ring and let us suppose that we have a specialisation of coefficients, given by a morphism:
$$\psi: R \rightarrow S.$$
The specialisation of the module $M$ by the morphism $\psi$ is the following $S$-module: $$M|_{\psi}:=M \otimes_{R} S.$$ It will have a basis described by:
$$\mathscr B_{M|_{\psi}}:=\mathscr B \otimes_{R}1 \in M|_{\psi}. $$
\end{notation}

\end{notation}
\begin{defn}(Specialisations of coefficients towards one variable)\label{N}
We consider two specialisations of coefficients, given by:
\begin{equation}
\begin{aligned}
\psi_{J}: \Z[x^{\pm \frac{1}{2}}, d^{\pm 1}]/& \left((d+1)(dx-1) \right) \rightarrow \Z[x^{\pm \frac{1}{2}}]\\
&\psi_{J}(d)=x^{-1}; \psi_{J}(x)=x.
\end{aligned}
\end{equation}
\begin{equation}
\begin{aligned}
\psi_{\Delta}: \Z[x^{\pm \frac{1}{2}}, d^{\pm 1}]/&\left((d+1)(dx-1) \right)\rightarrow \Z[x^{\pm \frac{1}{2}}]\\
&\psi_{\Delta}(d)=-1; \psi_{\Delta}(x)=x.
\end{aligned}
\end{equation}
\end{defn}
We will use these two changes of coefficients $\psi_{J}$ and $\psi_{\Delta}$ in order to pass from the intersection form from the quotient ring in two variables $\LL$ towards the Jones polynomial and Alexander polynomial respectively. 

\begin{figure}[H]
\begin{center}
\begin{tikzpicture}
[x=1.2mm,y=1.4mm]

\node (b1)  at   (44,0)  {$\Z[x^{\pm \frac{1}{2}}]$};
\node (b5)  at  (0,0) {$\Z[x^{\pm \frac{1}{2}}]$};
\node (b4) at (22,15)   {$\Z[x^{\pm \frac{1}{2}}, d^{\pm 1}]/\left((d+1)(dx-1) \right)$};
\node (s1)  at   (0,6)  {$d=x^{-1}$};
\node (s2)  at   (44,6)  {$d=-1$};
\draw[<-]  (b1)      to node [left,yshift=-2mm,font=\large]{$\psi_{\Delta}$}   (b4);
\draw[<-]   (b5) to node [right,yshift=-2mm,font=\large] {$\psi_{J}$}                        (b4);
\end{tikzpicture}
\end{center}
\end{figure}
\begin{defn} (Specialisations for Lawrence representations) Let us denote the quotient morphism to $\LL$ by:
\begin{equation}\label{NN}
s:\Z[x^{\pm1},d^{\pm1}]\rightarrow\Z[x^{\pm1},d^{\pm1}]/((d-1)(xd-1)).
\end{equation}
\end{defn}
\begin{notation}\label{Jones}
We consider $\tilde{J}(L)$ to be the normalised Jones polynomial, whose evaluation on the unknot $\mathscr U$ is the following:
$$\tilde{J}(\mathscr U)(x)=1.$$ 
Also, let $J(L)$ be the un-normalised version of the Jones polynomial, which is given by:
$$J(\mathscr U)(x)=x^{\frac{1}{2}}+x^{-\frac{1}{2}}.$$ 
\end{notation}
\section{Definition of the intersection forms}\label{S:1} In this part we introduce the two intersection forms, which are given by certain graded intersections in the configuration spaces in the punctured disc.

For $n,m\in \N$ we consider the unordered configuration space of $m$ points in the $n$-punctured disc and denote it by
$C_{n,m}:=\Conf_{m}(\mathscr D_{n}).$
We consider a collection of base points $d_1,...,d_m \in \mathscr D_n$ and the associated base point in the configuration space ${\bf d}=\{d_1,...,d_m\}\in C_{n,m}$.

\vspace{-1mm}
\begin{center}
\begin{tikzpicture}[scale=0.95]
\foreach \x/\y in {-0.3/2,2/2,4/2,2/1,2.5/1,3/1.06} {\node at (\x,\y) [circle,fill,inner sep=1pt] {};}
\node at (-0.1,2.6) [anchor=north east] {$1$};
\node at (2.2,2.6) [anchor=north east] {$i$};
\node at (4.2,2.6) [anchor=north east] {$n$};
\node at (3,2) [anchor=north east] {$\sigma_i$};
\node at (2.2,1) [anchor=north east] {$\tiny \mathrm d_1$};
\node at (2.8,1) [anchor=north east] {$\tiny \mathrm d_2$};
\node at (3.6,1) [anchor=north east] {$\tiny \mathrm d_m$};
\node at (2.63,2.3) [anchor=north east] {$\wedge$};
\draw (2,1.8) ellipse (0.4cm and 0.8cm);
\draw (1.9,1.8) ellipse (3.65cm and 1.65cm);
\foreach \x/\y in {7/2,9/2,11/2,8.9/1,9.5/1,10/1.07} {\node at (\x,\y) [circle,fill,inner sep=1pt] {};}
\node at (7.2,2.6) [anchor=north east] {$1$};
\node at (9.2,2.6) [anchor=north east] {$i$};
\node at (11.2,2.6) [anchor=north east] {$n$};
\node at (9.2,1) [anchor=north east] {$\tiny \mathrm d_1$};
\node at (9.8,1) [anchor=north east] {$\tiny \mathrm d_2$};
\node at (10.6,1) [anchor=north east] {$\tiny \mathrm d_m$};
\node at (9,1.5) [anchor=north east] {$\delta$};
\draw (9.4,1.8) ellipse (3.65cm and 1.65cm);
\draw (9.5,1)  arc[radius = 3mm, start angle= 0, end angle= 180];
\draw [->](9.5,1)  arc[radius = 3mm, start angle= 0, end angle= 90];
\draw (8.9,1) to[out=50,in=120] (9.5,1);
\draw [->](8.9,1) to[out=50,in=160] (9.25,1.12);
\end{tikzpicture}
\end{center}
For the grading procedure, we will use a local system on this configuration space.
\begin{notation} For this, we start with the abelianisation map $ab: \pi_1(C_{n,m}) \rightarrow H_1(C_{n,m})$. Then, for any $m\geq 2$ we have: 
\begin{equation*}
\begin{aligned}
H_1(C_{n,m}) \ & \simeq \ \ \ \ \Z^{n} \ \ \oplus \ \ \Z \\
& \hspace{6mm} \langle ab(\Sigma_i)\rangle \ \ \langle ab(\Delta)\rangle, \  {i\in \{1,...,n\}}.
\end{aligned}
\end{equation*}
More specifically, the generators of the first group, $\Z^{n}$, are given by classes of loops defined by the property that their first component goes around the $i$-th puncture and the other components are constant:
$\Sigma_i(t):=\{ \left(\sigma_i(t), d_2,...,d_m \right) \}, t \in [0,1].$
The generator of the second group is the class of a loop $\Delta$ which swaps the first two base points: $\Delta(t):=\{ \left(\delta(t), d_3,...,d_m \right) \}, t \in [0,1].$
\end{notation}

For the next part we fix $l \in \N$ such that $l \leq n$. We will next use a morphism that distinguishes the $n$ punctures, by separating them into two sets: $n-l$ black punctures and $l$ blue punctures, as in figure \ref{Localsystem}. Further on, for orientation purposes, we also fix a number $k\in \{0,...,n-l\}$.
\begin{center}
\begin{figure}[H]
\centering
\includegraphics[scale=0.27]{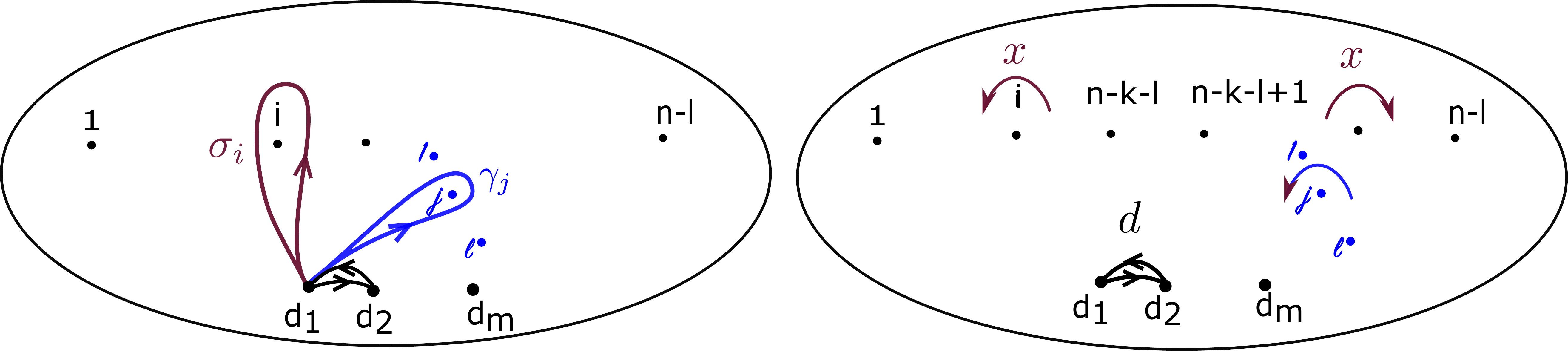}
\hspace{-10mm}\caption{Local system }\label{Localsystem}
\end{figure}
\end{center}
\vspace{-8mm}
\begin{defn}(Local system)\label{localsystem} For $l \in \{1,...,n\}$ and $k \in \{0,...,n-l\}$, we define the following morphism: 
\begin{equation}
\begin{aligned}
&  \ \hspace{-6mm} \text{ab} \ \ \ \ \ \ \ \ \ \ \ \ \ \ \ \ \ \ \ \ \ \ \ \ f\\
 \phi: \pi_1\left(C_{n,m} \right) \ \rightarrow \ & \ \Z^{n-l} \oplus \Z^{l} \oplus \Z  \ \rightarrow  \ \Z \ \oplus \Z \oplus \Z\\
& \langle [\sigma_i] \rangle \ \ \langle [\gamma_j]\rangle \ \ \langle [\delta]\rangle \ \ \ \ \langle x \rangle \ \  \langle y \rangle \ \ \langle d \rangle\\
&{i\in \{1,...,n-l\}}, j\in \{1,...,l\}\\
&\ \hspace{-28mm} \phi=f \circ ab.
\end{aligned}
\end{equation}
Here, the morphism $f$ is defined as augmentations on the first two summands as follows:
\begin{equation}
\begin{cases}
&f(\sigma_i)=x, i\in \{1,...,n-k-l\}\\
&f(\sigma_i)=-x, i\in \{n-k-l+1,...,n-l\}\\
&f(\gamma_j)=y, j\in \{1,...,l\}\\
&f(\delta)=d.
\end{cases}
\end{equation}
\end{defn}
\subsection{The open and closed intersection forms}
In the next part we present the grading procedures that will be used for the definition of the two graded intersections. For $\Omega$, we will work in the setting from Definition \ref{localsystem} where the ambient space and the parameters $n,l,k$ are: $$\Big(\C=\Conf_{n}(\mathscr D_{3n}), n\rightarrow 3n, l\rightarrow n, k\rightarrow n \Big).$$ For $\Omega'$ we will use the following data:
$$\Big(\Cp=\Conf_{n-1}(\mathscr D_{3n-2}), n\rightarrow 3n-2, l\rightarrow n-1, k\rightarrow n-1 \Big).$$
The construction of the graded intersection $\Omega'$ is precisely the one presented in \cite{Cr}, for the case where the colour $N=2$. 
This is parametrised by the set of intersection points between the two Lagrangians from figure \ref{IntroL}:
\begin{equation}
I'_{\beta_n}:=(\beta_n \cup \mathbb I_{2n-1}) \cs'\cap \ct'
\end{equation}
together with a grading coming from a certain local system defined on the configuration space.

For the closed model $\Omega$, the intersection will be defined in an analog manner, where we add one particle in our configuration space, as below.
\subsubsection{Grading for $\Omega$}
   Let us fix $\beta_n \in B_n$. Using the property that the braid group is the mapping class group of the punctured disc, we act with such a braid (to which we add $2n$ trivial strands) on $\cs$ and consider the submanifold:
$$(\beta_n \cup \mathbb I_{2n}) \cs \subseteq \C.$$
We choose a representative of this action such that it is supported in a grey disk around the punctures labeled by $\{1,...,n\}$, and also such that the above submanifold is a Lagrangian submanifold, as discussed in \cite{Cr} Section 2.2. 

Further on, we define the graded intersection pairing, which is generated by the set of intersection points, denoted by:
\begin{equation}
I_{\beta_n}:=(\beta_n \cup \mathbb I_{2n}) \cs\cap \ct
\end{equation}
and graded by the above local system.
The grading procedure will be done by associating to each intersection point $\bar{x}$ a loop $l_{\bar{x}}$ in the configuration space, which will be evaluated by the morphism $\phi$:
$$x \in I_{\beta_n} \ \ \rightsquigarrow \ \ l_{\bar{x}} \ \ \rightsquigarrow \ \  \phi(l_{\bar{x}}).$$
In order to prescribe the loop, we use a base point which is chosen on the submanifold $\cs$, and we denote it as ${\bf d}:=(d^1,...,d^{n})$, using picture \ref{Diffeo}.
Let us denote by
$s\cs, s\ct \subseteq \mathscr D_{3n}$ the collections of $n$ red curves and $n$ green circles which give the geometric supports for $\cs$ and $\ct$.
\begin{defn}(Paths to the base points) For the next part, we fix a set of $n$ paths in the punctured disc  which connect the red curves with the left hand side of the circles, as in figure \ref{Diffeo}, and denoted them by $\eta_1,...,\eta_{n}$. Similarly, we consider a collection of paths $\eta'_1,...,\eta'_{n}$ which start from the red curves and end on the right hand side of the circles.\end{defn}
\vspace{-3mm}
\begin{figure}[H]
\centering
\includegraphics[scale=0.4]{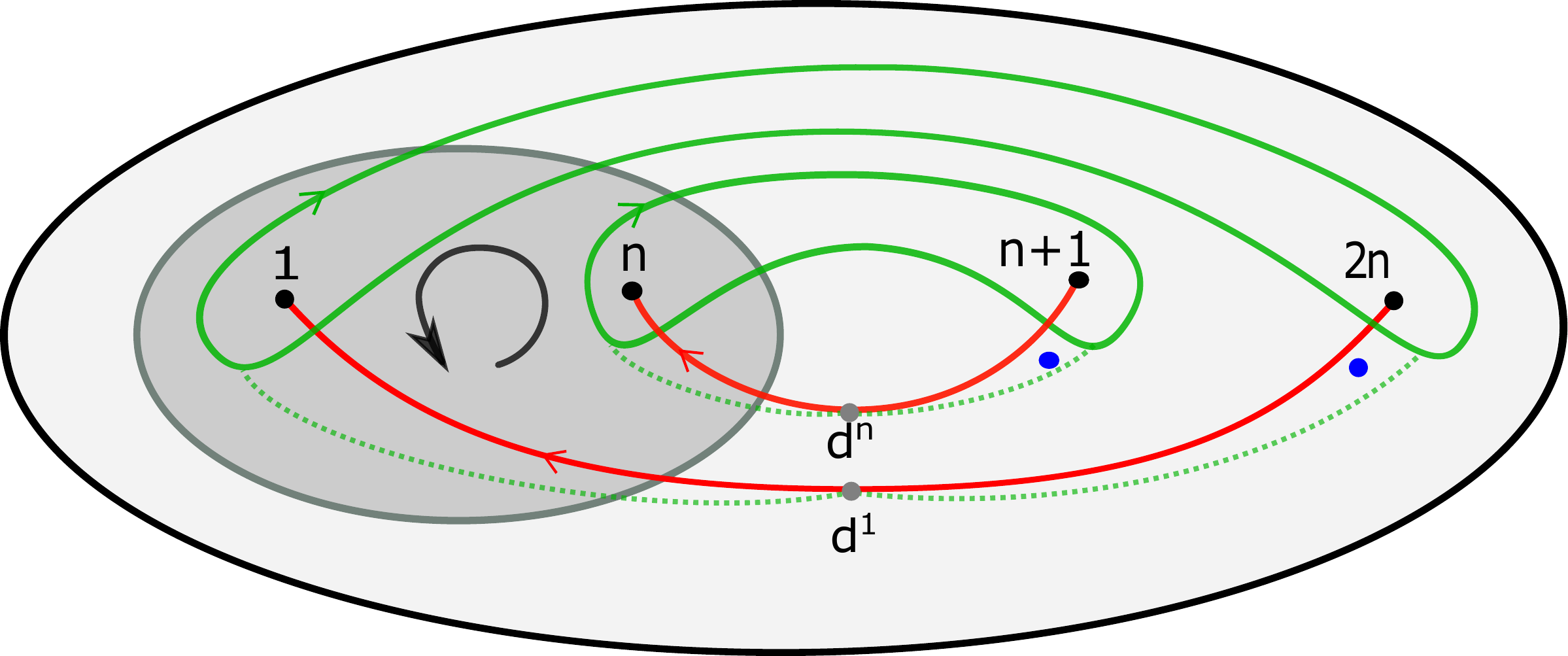}
\vspace{-1mm}
\caption{Braid action}
\label{Diffeo}
\end{figure}

\begin{defn}(Loop associated to an intersection point)
Let $\bar{x}=(x_1,...,x_{n}) \in I_{\beta_n}$. The loop, based in $\bf{d}$, will be constructed in two steps.

For the first part, we fix $k \in \{1,...,n\}$ and use the $k^{th}$ green circle which goes around the punctures $(k, 2n+1-k)$. There is exactly one component of $\bar{x}$, denoted by $x_{\iota(k)}$, which belongs to this circle.

If $x_{\iota(k)}$ is in the left hand side of the punctured disc, we define $\bar{\nu}^k$ to be the path (in the punctured disc) which starts from $d^k$ following $\eta_k$ and then continue on the green curve until it reaches the point $x_{\iota(k)}$, as in figure \ref{Picture1}. If $x_{\iota(k)}$ is on the right hand side of the punctured disc, then we do a similar procedure and define a path $\bar{\nu}^k$ by using the path $\eta'_k$ to begin with, and then go to the intersection point following part of the green circle.

\begin{figure}[H]
\centering
\includegraphics[scale=0.4]{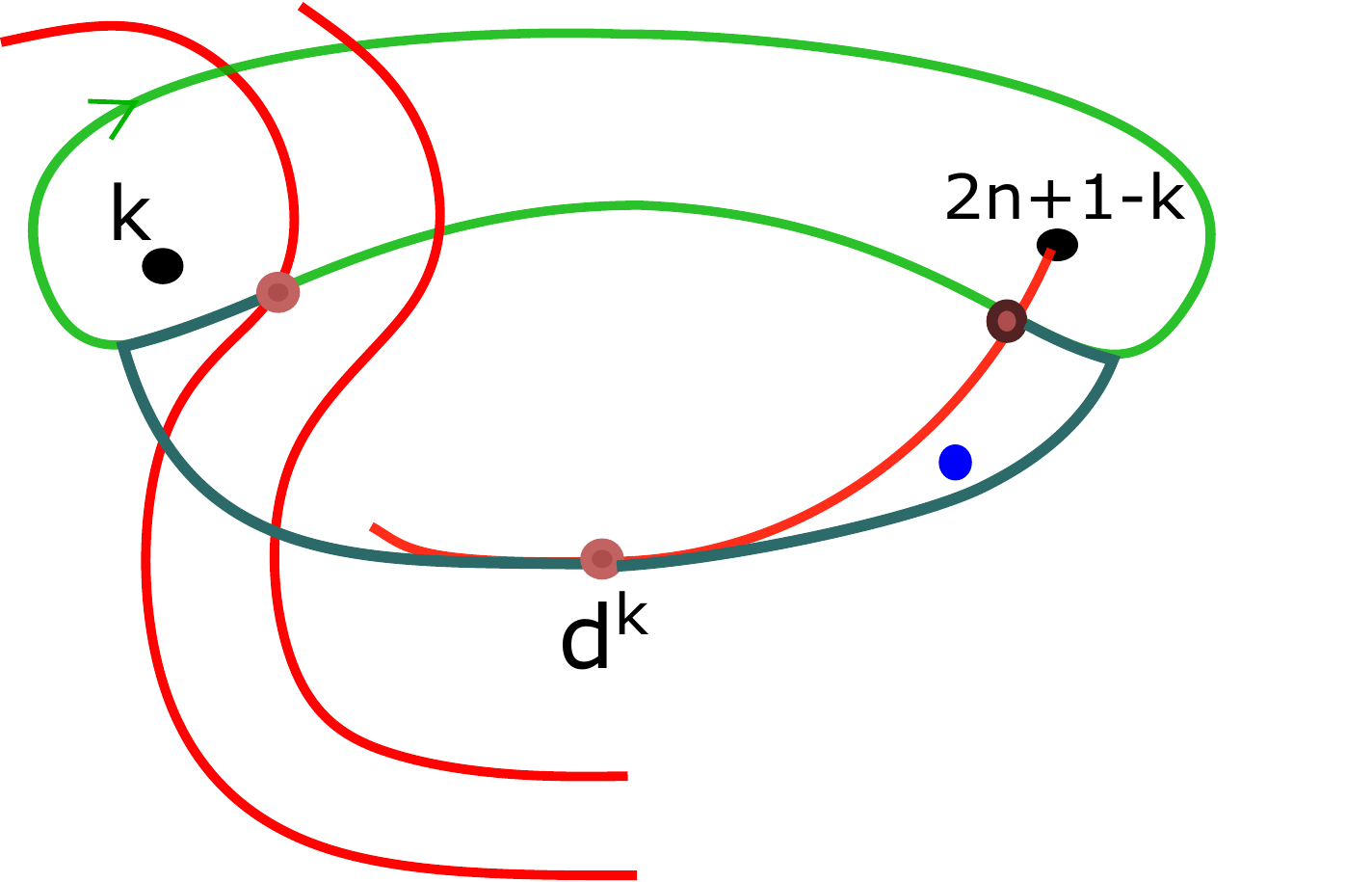}
\caption{Paths from the base points}\label{Picture1}
\end{figure}
\vspace{-33mm}
$$\eta_k \ \ \ \ \ \ \ \ \ \ \ \ \ \ \ \ \ \ \ \ \ \ \ \ \ \ \ \ \ \ \ \ \ \ \eta'_k \ \ \ \ \ \ \ \ \ \ $$

\vspace{20mm}

Doing this construction for all $k\in \{1,...,n\}$ we get a collection of $n$ paths in the punctured disc. Now we consider the path in the configuration space from $\bf d$ to $\bar{x}$ given by the union of these paths:
\begin{equation}
\bar{\gamma}_{\bar{x}}:= \{\bar{\nu}^1,...,\bar{\nu}^{n}\}. 
\end{equation}

For the second part of the construction, we start from the components of $\bar{x}$ and we go back to the base points in the punctures disc using the red arcs. More precisely, each component $x_k$ of $\bar{x}$ belongs to a unique red arc, denoted by $j(k)$. Let $\nu_k$ be the path in the punctured disc starting in $x_k$ and ending in $d^{j(k)}$ following the $j(k)^{th}$ red curve. Now, we look at the path in the configuration space from $\bar{x}$ to $\bf{d}$ given by this collection of paths, and denote it as below:
\begin{equation}
\gamma_{\bar{x}}:= \{\nu^1,...,\nu^{n}\}. 
\end{equation}
Now we define the loop associated to the intersection point $\bar{x}$ (base in $\bf d$) as the composition of the two previous paths:
\begin{equation}
l_{\bar{x}}:=\gamma_{\bar{x}} \circ \bar{\gamma}_{\bar{x}}. 
\end{equation}
\end{defn}
\subsection{Graded intersections} In this part we recall the definition of the graded intersection defined in \cite{Cr}. After that, we consider a smaller local system and use that to define the graded intersection which we use for the main result. 
\begin{defn}\label{D:int0} We consider a graded intersection $\ll (\beta_n\cup \mathbb I_{2n}) \cs,\ct \gg \ \in \Z[x^{\pm1},y^{\pm 1},d^{\pm 1}]$, which is parametrised by the intersection points and graded using the associated loops and the local system as below:
\begin{equation}\label{int0}
\ll (\beta_n\cup \mathbb I_{2n}) \cs,\ct \gg:= \sum_{\bar{x}\in I_{\beta_n}} \epsilon_{x_1}\cdot...\cdot \epsilon_{x_n}\cdot \phi(l_{\bar{x}}).
\end{equation}
In this formula $\epsilon_{x_i}$ is the sign of the local intersection between the circle and the red curve that $x_i$ belongs to, in the punctured disc.
\end{defn}
For the grading procedure that we need for this model, we will use a further quotient which is defined as follows:
\begin{equation}
\begin{aligned}
F:~ & \Z[x^{\pm1},y^{\pm1},d^{\pm1}] \rightarrow \Z[x^{\pm1},d^{\pm1}]\\
& \hspace{-5mm}\begin{cases}
F(x)=x; \\
F(y)=-d; F(d)=d.
\end{cases}
\end{aligned}
\end{equation}
\begin{defn}(Change of coefficients) Let us define the morphism $\tilde{\phi}$ which is obtained from $\phi$ and has the co-domain the group ring $\Z[\Z \oplus \Z \oplus \Z]\simeq \Z[x^{\pm1},y^{\pm1},d^{\pm1}]$:
$$\tilde{\phi}:\pi_1(C_{n,m})\rightarrow \Z[x^{\pm1},y^{\pm1},d^{\pm1}].$$ 
Then, we define the morphism obtained from $\tilde{\phi}$ by taking the quotient using $F$:
\begin{equation}
\begin{aligned}
&\varphi:\pi_1(C_{n,m})\rightarrow \Z[x^{\pm1},d^{\pm1}]\\
&\varphi=F \circ \tilde{\phi}.
\end{aligned}
\end{equation}
\end{defn}
\begin{defn}(Grading)\label{D:int} Let us define the following graded intersection:
\begin{equation}\label{int}
\begin{aligned} 
& \langle (\beta_n\cup \mathbb I_{2n}) \cs,\ct \rangle  \in \Z[x^{\pm1},d^{\pm 1}]\\
&\langle (\beta_n\cup \mathbb I_{2n}) \cs,\ct \rangle:= \sum_{\bar{x}\in I_{\beta_n}} \epsilon_{x_1}\cdot...\cdot \epsilon_{x_{n}}\cdot \varphi(l_{\bar{x}}).
\end{aligned}
\end{equation}
\end{defn}
We remark that:
\begin{equation*}
\begin{aligned}\label{eq:2}
\langle (\beta_n\cup \mathbb I_{2n}) \cs,\ct \rangle=&~F\left(\ll (\beta_n\cup \mathbb I_{2n}) \cs,\ct \gg\right)=\\
&=~\ll (\beta_n\cup \mathbb I_{2n}) \cs,\ct \gg |_{y=-d}.
\end{aligned}
\end{equation*}

In the next part we use this graded intersection in order to define two intersection models: one which is associated to the total closure of a braid and another one which corresponds the closure which leaves the first strand open.
\begin{center}
$\cs,\ct \ \ \ \ \ \ \ \ \ \ \ \ \ \ \  \ \ \ \ \ \ \ \ \ \ \ \ \ \ \ \ \ \ \ \ \ \ \ \ \ \ \ \   \ \ \ \ \ \ \ \ \ \ \ \ \cs',\ct'$
\begin{figure}[H]
\centering
\includegraphics[scale=0.3]{Diffeotwo.pdf}
\hspace{-10mm}\caption{Closed up intersection $\Omega(\beta_n)$ \hspace{30mm} Open intersection $\Omega'(\beta_n)$ \ \ \ \ \ \ \ \ \ \ \ \ \ \ \ \  }\label{Openmodel}
\end{figure}
\end{center}
\begin{defn}(Graded intersections)\label{defn} Let us consider the following polynomials:
$$\Omega(\beta_n)(x,d), \Omega'(\beta_n)(x,d) \in \Z[x^{\pm \frac{1}{2}}, d^{\pm 1}],$$
which are obtained from graded intersections coming from the  the Lagrangian submanifolds form picture \ref{Openmodel}, and are given by the formulas:
\begin{equation}
\begin{aligned}
& \Omega(\beta_n)(x,d):=(d^2x)^{\frac{w(\beta_n)+n}{2}} \cdot d^{-n}\langle  (\beta_n \cup \mathbb I_{2n}) \cs, \ct \rangle\\
& \Omega'(\beta_n)(x,d):=(d^2x)^{\frac{w(\beta_n)+n-1}{2}} \cdot d^{-(n-1)}\langle  (\beta_n \cup \mathbb I_{2n-1}) \cs, \ct \rangle.
\end{aligned}
\end{equation}
We call $\Omega(\beta_n)(x,d)$ the graded intersection associated to the closed up model and $\Omega'(\beta_n)(x,d)$ the graded intersection corresponding to the open model.
\end{defn}
\section{Unknot and the stabilised unknot}\label{S:3}

In this section we investigate the necessary conditions on the ring of coefficients such that the intersection model leads to a link invariant. 
Let us start with the unknot, seen as the closure of the following braids:
$\mathbb I_1\in B_1$ and $ \sigma \in B_2$.
Now we compute the intersection model, which is obtained from the intersection points between the following Lagrangian submanifolds:
\begin{figure}[H]
\centering
\includegraphics[scale=0.4]{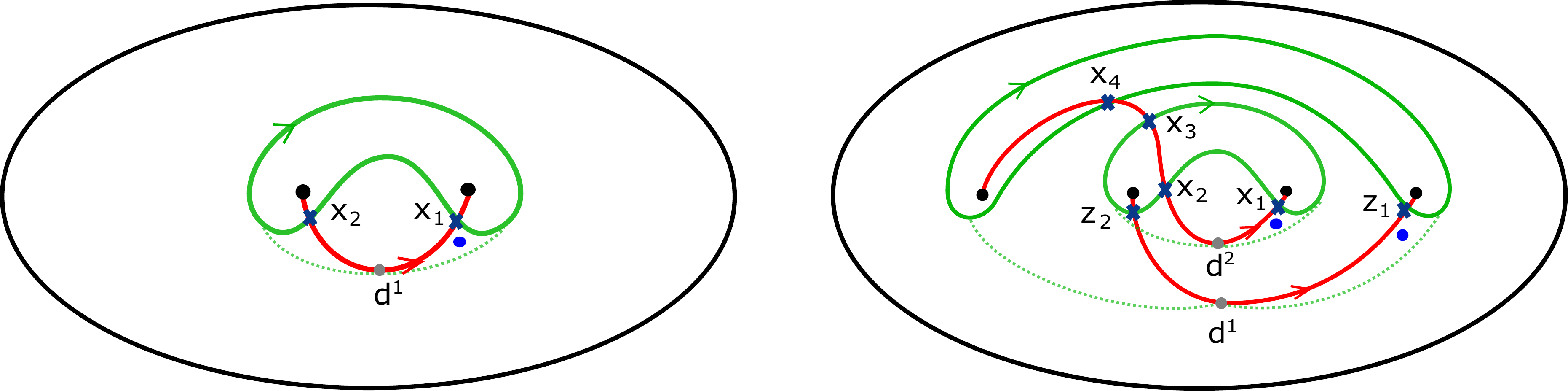}
\caption{Unknot \hspace{50mm} Stabilised unknot}
\end{figure}
Computing the grading of these intersection points from the picture, we obtain the coefficients from below:
\begin{center}
\hspace{15mm}\label{tabel}
{\renewcommand{\arraystretch}{1.7}\begin{tabular}{ | c | c | }
 \hline
$x_1$ & $x_2$\\ 
\hline  
$d$ & $1$\\ 
\hline
\hline                                    
\end{tabular}}
\quad
 \hspace{20mm}{\renewcommand{\arraystretch}{1.7}\begin{tabular}{ | c | c | c | c | }
 \hline
$(x_1,z_1)$ & $(x_2,z_1)$ & $(x_3,z_1)$ & $(x_4,z_2)$  \\ 
\hline  
$d^2$ & $d$ & $-dx^{-1}$ & $d^{-1}x^{-1}$ \\ 
\hline
\hline                                    
\end{tabular}}
\end{center}
This means that the intersection form has the following formulas:
\begin{equation}
\begin{aligned}
&\Omega(\mathbb I_1)=x^{\frac{1}{2}}(1+d)\\
&\Omega(\sigma)=dx^{\frac{3}{2}}(d^{2}+d-dx^{-1}+d^{-1}x^{-1}).
\end{aligned}
\end{equation}
In order to obtain from $\Omega$ a link invariant, this should be the same for these two braids, which give the same knot by braid closure, so the following relation should be true:
\begin{equation}
\Omega(\mathbb I_1)=\Omega(\sigma).
\end{equation}
Using the formulas for the two intersections, we obtain the following relation:
\begin{equation}
d^{2}x+dx-d-1=0
\end{equation}
which is equivalent to:
\begin{equation}
(d+1)(dx-1)=0.
\end{equation}
This shows that $\LL=\Z[x^{\pm \frac{1}{2}}, d^{\pm 1}]/\left( (d+1)(dx-1)\right)$ is the largest quotient in which the intersection form can become a link invariant.
\section{Description of the closed and open graded intersections in terms of state sums}\label{S:2} In order to prove the invariance of our intersection forms with respect to braid conjugation, we will use a different description of $\Omega(\beta_n)$ and $\Omega'(\beta_n)$, as a state sums of Lagrangian intersections following \cite{Cr}. In the next part we outline the construction of these state sum invariants, which use homological representations of braid groups.

\subsubsection{Homological representations}\label{homclasses}

We use the structure of certain Lawrence representations which are presented in \cite{Cr1} and \cite{CrM}. They are braid group representations on subspaces in the homology of a $\mathbb Z \oplus \mathbb Z$-covering of the configuration space $C_{n,m}$, which are generated by explicit homology classes.
For the following definitions we consider the parameter $l=0$.

Let us look at the local system $\phi$ from definition \ref{localsystem}, where we replace the variable $d$ with a variable which we call $d'$ (this is for a sign reason that we will see later on), and denote it by $\phi'$. 
\begin{equation}\label{eq:23}
\begin{aligned}
\phi': \pi_1(C_{n,m}) \rightarrow \ & \Z \oplus \Z\\ 
 & \langle x \rangle \ \langle d' \rangle\\
\end{aligned}
\end{equation}
Let $\tilde{C}_{n,m}$ be the $\Z \oplus \Z$-covering associated to $\phi'$.
Let $w$ be a point on the boundary of the punctured disc and denote by $C_{w}$ the part of the boundary of the configuration space $C_{n,m}$ which is given by configurations containing $w$. Let $\bar{C}_{w}$ be the complement of $C_{w}$ in the boundary of the configuration space.  Then, let $\pi^{-1}({w})$ be the part of the boundary of the covering $\tilde{C}_{n,m}$ given by the fiber over $C_{w}$.

 In the next part we consider part of the Borel-Moore homology of this covering which comes from the Borel-Moore homology of the base space twisted by the local system $\phi'$. 
\begin{prop}(\cite{Cr1})
Let $H^{\text{lf}}_m(\tilde{C}_{n,m},\pi^{-1}(w); \Z)$ be the Borel-Moore homology of the covering relative to part of the boundary given by $\pi^{-1}(w)$, which is a $\Z[x^{\pm1}, d'^{\pm1}]$-module via the group of deck transformations. Then there is a well defined braid group action which is compatible with the structure of a $\Z[x^{\pm1}, d'^{\pm1}]$-module:
$$B_n \curvearrowright H^{\text{lf}}_m(\tilde{C}_{n,m},\pi^{-1}(x); \Z) \ (\text{as a }\Z[x^{\pm1}, d'^{\pm1}]\text{-module}).$$ 
\end{prop}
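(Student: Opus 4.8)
The plan is to realise the braid group action geometrically through the mapping class group interpretation of $B_n$ and then lift it to the covering. First I would represent a braid $\beta \in B_n$ by a homeomorphism $h_\beta$ of the $n$-punctured disc $\mathscr D_n$ fixing $\partial \mathscr D_n$ pointwise, which may moreover be taken to be the identity on a collar neighbourhood of the boundary; such a representative is unique up to isotopy relative to the boundary. Acting diagonally on configurations, $h_\beta$ induces a homeomorphism $H_\beta$ of $C_{n,m}=\Conf_m(\mathscr D_n)$ which is the identity near the part $C_w$ of the boundary coming from configurations containing $w$, and which therefore preserves the pair $(C_{n,m}, C_w)$ as well as the relevant boundary piece.

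The key compatibility step is to verify that $H_\beta$ preserves the local system, i.e. $\phi' \circ (H_\beta)_* = \phi'$ on $\pi_1(C_{n,m})$. This reduces to checking that $(H_\beta)_*$ preserves the two classes to which $\phi'$ is sensitive: the point-swapping class recorded by $d'$, which is intrinsic and hence manifestly preserved, and the class recorded by $x$, which registers the total winding of the configuration around the punctures and is preserved because a braid permutes the meridians $\Sigma_i$ of the punctures among themselves. I expect this $\phi'$-invariance to be the main obstacle, since it is exactly the hypothesis under which the lift to the covering can be taken compatibly with the deck group.

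Granted this, the homeomorphism $H_\beta$ lifts to the $\Z \oplus \Z$-covering $\tilde C_{n,m}$ associated to $\phi'$. I would pin down the lift $\tilde H_\beta$ uniquely by requiring it to be the identity on the fibre over the collar near $C_w$; this normalisation is available precisely because $H_\beta$ is the identity there and $\phi'$ is preserved. The lift then restricts to the identity on $\pi^{-1}(w)$, so it induces a well-defined endomorphism of the relative Borel--Moore homology $H^{\text{lf}}_m(\tilde C_{n,m}, \pi^{-1}(w);\Z)$ by functoriality of $H^{\text{lf}}$ under proper maps (a homeomorphism being proper, and the pair being respected).

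Finally I would establish the three formal properties. Linearity over $\Z[x^{\pm1},d'^{\pm1}]$ follows because $\tilde H_\beta$ commutes with the deck group: conjugating a deck transformation $g$ by $\tilde H_\beta$ produces a deck transformation agreeing with $g$ on the fixed base fibre, hence equal to $g$. The homomorphism property $\tilde H_{\beta_1\beta_2}=\tilde H_{\beta_1}\tilde H_{\beta_2}$ follows from uniqueness of the normalised lift, and independence of the isotopy class of $h_\beta$ follows by lifting an isotopy (again normalised on the collar), so that isotopic braids induce homotopic lifts and therefore the same map on homology. Together these exhibit a well-defined action of $B_n$ on $H^{\text{lf}}_m(\tilde C_{n,m}, \pi^{-1}(w);\Z)$ by $\Z[x^{\pm1},d'^{\pm1}]$-module automorphisms.
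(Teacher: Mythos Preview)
Your argument is correct and is essentially the standard construction of Lawrence-type representations. Note, however, that the paper itself does not prove this proposition: it is stated with a citation to \cite{Cr1} and no proof is given in the present paper. So there is no in-paper proof to compare against; your outline is precisely the kind of argument one finds in the cited reference, namely realising $B_n$ via the mapping class group, checking that the induced action on $\pi_1(C_{n,m})$ preserves the local system $\phi'$ (which holds because the braid merely permutes the meridians $\Sigma_i$, all of which are sent to $x$, and fixes the intrinsic class $\Delta$), and then lifting uniquely to the covering using a normalisation on the fibre over the fixed boundary collar.
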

\begin{notation}
Let $H^{\partial}_{m}(\tilde{C}_{n,m}, \partial^{-}; \Z)$ the homology relative to the boundary of $\tilde{C}_{n,m}$ which is not in the fiber over $w$.

\end{notation}
\begin{prop}(\cite{CrM} Theorem E)\label{R:D}
Let $H^{\text{lf}}_m(C_{n,m}, C_{w}; \mathscr L_{\phi})$ and $H_{m}(\tilde{C}_{n,m}, \bar{C}_{w}; \mathscr L_{\phi})$ be the Borel-Moore homology and the homology of the base space relative to the boundary, with coefficients in the local system associated to $\phi'$ (which we denote by $\mathscr L_{\phi}$). Then, we have natural injective maps, which are compatible with the braid group actions as below:
\begin{equation}
\begin{aligned}
&\iota: H^{\text{lf}}_m(C_{n,m}, C_{w}; \mathscr L_{\phi})\rightarrow H^{\text{lf}}_m(\tilde{C}_{n,m}, \pi^{-1}({w});\Z)\\
&\iota^{\partial}: H_{m}(\tilde{C}_{n,m}, \bar{C}_{w}; \mathscr L_{\phi}) \rightarrow H^{\partial}_{m}(\tilde{C}_{n,m}, \partial^{-}; \Z).
\end{aligned}
\end{equation}
\end{prop}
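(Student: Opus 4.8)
The plan is to realise both $\iota$ and $\iota^\partial$ as the effect on homology of one chain-level comparison between the twisted chain complex of the base $C_{n,m}$ and the equivariant, locally finite chain complex of the cover $\tilde C_{n,m}$, and then to establish injectivity by pairing against an explicit dual family of geometric classes. First I would fix a handle/CW decomposition of $C_{n,m}=\Conf_m(\mathscr D_n)$ adapted to the two boundary pieces $C_w$ and $\bar C_w$, and lift it to $\tilde C_{n,m}$. Since $\pi$ is the regular $\Z\oplus\Z$-cover attached to $\phi'$, the deck group is $\langle x\rangle\oplus\langle d'\rangle$ and the lifted cellular chains form a complex of free $\Z[x^{\pm1},d'^{\pm1}]$-modules with one generator per cell of the base. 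Under this identification the complex computing $H^{\text{lf}}_m(C_{n,m},C_w;\mathscr L_\phi)$ is the finitely supported part, with coefficients in the group ring $\Z[x^{\pm1},d'^{\pm1}]=\bigoplus_{\Z\oplus\Z}\Z$, whereas the Borel--Moore complex computing $H^{\text{lf}}_m(\tilde C_{n,m},\pi^{-1}(w);\Z)$ allows chains with locally finite — hence possibly infinite — support in the non-compact deck directions. The map $\iota$ is induced by the inclusion of finitely supported chains into locally finite ones, which is injective on chains; running the same construction relative to $\bar C_w$ and $\partial^-$ produces $\iota^\partial$.

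The real content is that injectivity survives to homology, i.e.\ that no finitely supported cycle becomes a boundary only through an infinitely supported locally finite chain. To control this I would use the middle-dimensional intersection pairing coming from Poincar\'e--Lefschetz duality on the oriented $2m$-manifold $\tilde C_{n,m}$, which pairs a class of $H^{\text{lf}}_m(\tilde C_{n,m},\pi^{-1}(w);\Z)$ with a class of $H^\partial_m(\tilde C_{n,m},\partial^-;\Z)$ and lands in $\Z[x^{\pm1},d'^{\pm1}]$. I would take the multifork basis of $H^{\text{lf}}_m(C_{n,m},C_w;\mathscr L_\phi)$ — products of the red arcs and green circles of Figure \ref{IntroL} lifted to the cover — together with the dual family of noodle classes in $H^\partial_m$, and compute their geometric intersection in $\mathscr D_n$ transversally, point by point, each transverse point contributing a monomial in $x,d'$ read off from $\phi'$. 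Ordering the multi-indices by which punctures each fork encloses makes the resulting pairing matrix triangular with diagonal entries equal to a single signed monomial, hence invertible over $\Z[x^{\pm1},d'^{\pm1}]$.

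Invertibility of this pairing matrix forces the $\iota$-images of the multifork basis to be linearly independent over the group ring; since the multiforks form a basis of the source, $\iota$ is injective. The adjoint pairing, with the two support conditions interchanged, shows in the same way that the noodle classes map to linearly independent elements, giving injectivity of $\iota^\partial$. Compatibility with the braid action is then automatic: $\phi'$ is invariant under the $B_n$-action on $C_{n,m}$, because a braid permutes the loops $\sigma_i$ but $\phi'$ sends all of them to $x$, so the action lifts to $\tilde C_{n,m}$ and commutes with the lifting of cells, with the duality pairing, and hence with both comparison maps.

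The step I expect to be the main obstacle is the second one. The genuine discrepancy between the Borel--Moore homology of the cover and the twisted Borel--Moore homology of the base is precisely the gap between $\bigoplus_{\Z\oplus\Z}\Z$ and its completion, and ruling out homology that is created only by infinite chains is not a formal matter. Everything hinges on producing a duality pairing that takes values in the un-completed ring $\Z[x^{\pm1},d'^{\pm1}]$ and is genuinely computable on the explicit geometric classes; verifying triangularity with monomial, and therefore invertible, diagonal entries is where the substantive work lies.
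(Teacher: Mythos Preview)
The paper does not prove this proposition; it is quoted verbatim as Theorem~E of \cite{CrM} and used as a black box. There is therefore no proof in the present paper to compare your proposal against.

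That said, your outline is essentially the strategy carried out in \cite{CrM} and in the surrounding Bigelow--Lawrence literature: identify the twisted chain complex of the base with the equivariant cellular complex of the regular cover, realise $\iota$ as the inclusion of finitely supported chains into locally finite chains, and then detect injectivity on homology by pairing against an explicit dual family whose intersection matrix is triangular with unit (monomial) diagonal over $\Z[x^{\pm1},d'^{\pm1}]$. The identification of the hard step --- that injectivity on chains need not survive to homology without such a pairing --- is correct, and the remedy you propose is the standard one.

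Two points to tighten. First, the explicit classes you want are not ``the red arcs and green circles of Figure~\ref{IntroL}'': those are the particular Lagrangians $\cs,\ct$ in $C_{3n,n}$ used to build the knot invariant. The basis of the general $H_{n,m}$ consists of the arc-products $\mathscr U'_{\bar j}$ of Figure~\ref{fig3}, and the dual classes in $H^\partial_{n,m}$ are the ``barcode'' classes (vertical segments from the boundary), as used later in the paper (e.g.\ the classes $\mathscr V,\mathscr W,\mathscr Z$ in Section~\ref{S:4}). The triangularity computation goes through with those. Second, your pairing argument shows that the $\iota$-images of the multiforks are linearly independent; to conclude that $\iota$ is injective on the \emph{whole} source you are tacitly using that the multiforks already span $H^{\text{lf}}_m(C_{n,m},C_w;\mathscr L_\phi)$. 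That spanning statement is not a consequence of the pairing and requires a separate argument (a fibration or handlebody computation of the twisted Borel--Moore homology of the base), which is part of what \cite{CrM} supplies.
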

\begin{notation}(Our homology groups)\label{R:1}  
We denote the images of the maps $\iota$ and $\iota^{\partial}$ by:
\begin{equation}
\begin{aligned}
& H_{n,m}\subseteq H^{\text{lf}}_m(\tilde{C}_{n,m}, \pi^{-1}({w});\Z)\\
& H^{\partial}_{n,m}\subseteq H^{\partial}_{m}(\tilde{C}_{n,m}, \partial^{-}; \Z).
\end{aligned}
\end{equation}
\end{notation}
Also, let us consider the following set of partitions:
\begin{equation}
E_{n,m}=\{\bar{j}=(j_1,...,j_{n}) \mid j_1,...,j_{n} \in \Z, \ j_1+...+j_{n}=m \}.
\end{equation}
In the following part we consider a family of homology classes in the above homology groups, which will be given by lifts of submanifolds in the base configuration space. These submanifolds will be encoded by ``geometric supports'' which are collections of curves in the punctured disc. For this part, we fix $d_1,...,d_m$ on the boundary of the punctured disc and ${\bf d}:=(d_1,...,d_m)$ a base point in the configuration space. Moreover, let us fix $\tilde{\bf d}$ to be a lift of this base point in the covering $\tilde{C}_{n,m}$.
\begin{defn}[Homology classes] Let $\bar{j}=(j_1,...,j_{n}) \in E_{n,m}$. The product of ordered configuration spaces on the geometric support from picture \ref{fig3}, whose number of particles is given by the partition $\bar{j}$, quotiented to the unordered configuration space $C_{n,m}$, gives a submanifold: $$\mathbb U_{\bar{j}}\subseteq C_{n,m}.$$
Then, in order to lift this submanifold in the covering, we will use ``paths to the base points'' which are collections of arcs in the punctured disc, from the base points towards the geometric support. More precisely, the collection of dotted paths from the base points towards the arcs from figure \ref{fig3} gives a path in the configuration space $\eta_{\bar{j}}$ from $\bar{d}$ to $\mathbb U_{\bar{j}}$. Then we lift this path to a path $\tilde{\eta}_{\bar{j}}$ through the base point $\tilde{\bf{d}}$. Now, we lift the submanifold $\mathbb U_{\bar{j}}$ to a submanifold $\tilde{\mathbb U}_{\bar{j}}$ through $\tilde{\eta}_{\bar{j}}(1)$. We consider the homology class given by this submanifold and denote it by:
\begin{equation}
\mathscr U'_{j_1,...,j_{n}}:=[\tilde{\mathbb U}_{\bar{j}}] \in  H_{n,m}.
\end{equation}
\end{defn}

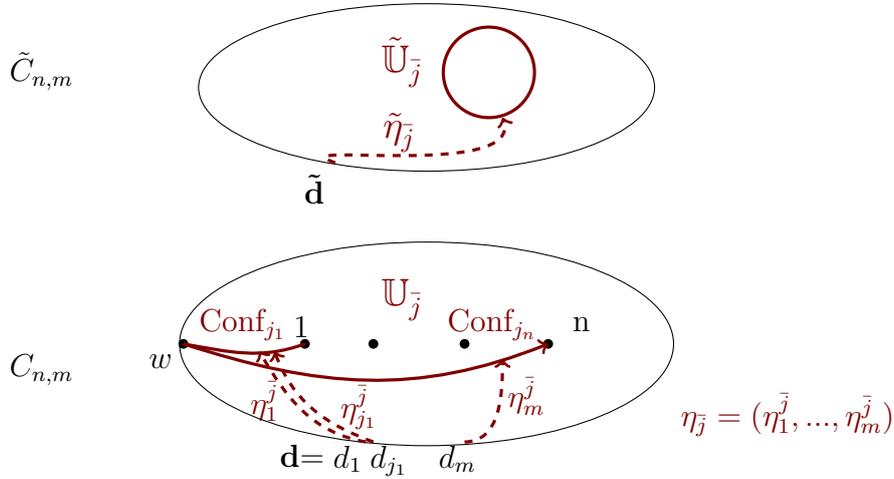
\begin{figure}[H]
\begin{center}
\begin{tikzpicture}\label{pic'}
[x=50 mm,y=10mm,font=\large]
\foreach \x/\y in {-1.2/2, 0.4/2 , 1.3/2 , 2.5/2 , 3.6/2 } {\node at (\x,\y) [circle,fill,inner sep=1.3pt] {};}
\node at (-1.2,2) [anchor=north east] {$w$};
\node at (0.6,2.5) [anchor=north east] {$1$};
\node at (0.2,1.65) [anchor=north east] {$\color{black!50!red}\eta^{\bar{j}}_1$};
\node at (1.5,1.6) [anchor=north east] {$\color{black!50!red} \eta^{\bar{j}}_{j_1}$};
\node at (3.7,1.7) [anchor=north east] {$\color{black!50!red} \eta^{\bar{j}}_{m}$};
\node (dn) at (8.3,1.5) [anchor=north east] {\large \color{black!50!red}$\eta_{\bar{j}}=(\eta^{\bar{j}}_1,...,\eta^{\bar{j}}_m)$};
\node at (2,5.2) [anchor=north east] {\Large \color{black!50!red}$\tilde{\eta}_{\bar{j}}$};

\node at (4.3,2.5) [anchor=north east] {n};
\node at (0.8,0.8) [anchor=north east] {\bf d$=$};
\node at (0.8,4.4) [anchor=north east] {\bf $\bf \tilde{d}$};
\node at (0.3,2.6) [anchor=north east] {$\color{black!50!red}\text{Conf}_{j_1}$};
\node at (3.6,2.6) [anchor=north east] {$\color{black!50!red}\text{Conf}_{j_{n}}$};
\node at (2.1,3) [anchor=north east] {\Large{\color{black!50!red}$\mathbb U_{\bar{j}}$}};
\node at (2.1,6.2) [anchor=north east] {\Large{\color{black!50!red}$\tilde{\mathbb U}_{\bar{j}}$}};
\node at (-2.5,2) [anchor=north east] {\large{$C_{n,m}$}};
\node at (-2.5,6) [anchor=north east] {\large{$\tilde{C}_{n,m}$}};
\draw [very thick,black!50!red,-][in=-160,out=-10](-1.2,2) to (0.4,2);
\draw [very thick,black!50!red,->] [in=-158,out=-18](-1.2,2) to (3.6,2);
 \draw[very thick,black!50!red] (2.82, 5.6) circle (0.6);
\draw (2,2) ellipse (3.25cm and 1.35cm);
\draw (2,5.4) ellipse (3cm and 1.11cm);
\node (d1) at (1.3,0.8) [anchor=north east] {$d_1$};
\node (d2) at (1.9,0.8) [anchor=north east] {$d_{j_1}$};
\node (dn) at (2.8,0.8) [anchor=north east] {$d_m$};
\draw [very thick,dashed, black!50!red,->][in=-60,out=-190](1.2,0.7) to  (-0.2,1.9);
\draw [very thick,dashed,black!50!red,->][in=-70,out=-200](1.3,0.7) to (0,1.9);
\draw [very thick,dashed,black!50!red,->][in=-90,out=0](2.5,0.7) to (3,1.8);
\draw [very thick,dashed,black!50!red,->][in=-70,out=-200](0.8,4.4) to (3,5);
\end{tikzpicture}
\end{center}
\vspace{-6mm}
\caption{Generators for the homology group $H_{n,m}$}
\label{fig3}
\end{figure}
\begin{prop}[Version of the Lawrence representation]\label{gen}
Following \cite{CrM}, this set of homology classes:
\begin{equation}
\mathscr{B}_{H_{n,m}}=\{ \mathscr U'_{j_1,...,j_{n}} \mid j_1,...,j_{n} \in \N, j_1+...+j_{n}=m\} 
\end{equation}
forms a basis for $H_{n,m}$ and there is a braid group action, denoted by:
$$L_{n,m}: B_n\rightarrow \Aut_{\Z[x^{\pm 1},d^{\pm 1}]}\left(H_{n,m}\right).$$
We called this the Lawrence representation.
\end{prop}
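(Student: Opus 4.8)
The plan is to treat the two assertions of the proposition separately: first the existence of the braid group action $L_{n,m}$, which follows by assembling the structural results recalled above, and then the basis property of $\mathscr{B}_{H_{n,m}}$, which carries the geometric content of \cite{CrM}. For the action, I would argue as follows. A braid $\beta \in B_n$, viewed as a mapping class of $\mathscr D_n$ fixing the boundary, induces a homeomorphism of $C_{n,m}$ that fixes the collar containing the base point ${\bf d}$. Since this homeomorphism preserves the local system $\phi'$, it lifts canonically to the covering $\tilde{C}_{n,m}$ fixing the chosen lift $\tilde{\bf d}$, and hence acts $\Z[x^{\pm1},{d'}^{\pm1}]$-linearly on $H^{\text{lf}}_m(\tilde{C}_{n,m},\pi^{-1}(w);\Z)$; this is exactly the action provided by the Proposition of \cite{Cr1}. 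By Proposition \ref{R:D} the inclusion $\iota$ is equivariant, so its image $H_{n,m}$ is a braid-invariant submodule, and restricting the ambient action yields the representation $L_{n,m}\colon B_n \to \Aut_{\Z[x^{\pm1},d^{\pm1}]}(H_{n,m})$ (after the identification of $d'$ with $d$ fixed in \cite{Cr}).

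For the basis property I would combine a freeness-and-rank input with a non-degeneracy check. First, each $\mathscr U'_{j_1,\dots,j_n}$ is a well-defined class of $H_{n,m}$: the submanifold $\mathbb{U}_{\bar j}$ is a product of configuration spaces supported on the disjoint arcs of Figure \ref{fig3}, hence a multifork lying in the image of $\iota$, and the paths $\eta_{\bar j}$ pin down its lift $\tilde{\mathbb{U}}_{\bar j}$ unambiguously. Second, I would use the handle/cell decomposition of $C_{n,m}$ underlying \cite{CrM} (in the spirit of the standard treatments of Lawrence representations) to show that $H_{n,m}$ is free and concentrated in degree $m$, with cells indexed exactly by $E_{n,m}$, so that its rank equals $\card(E_{n,m})$. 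Third, to identify these cells with our classes, I would pair the $\mathscr U'_{\bar j}$ against the dual barcode classes $\mathscr V_{\bar i}\in H^{\partial}_{n,m}$ and show that the Gram matrix $\big(\langle \mathscr U'_{\bar j},\mathscr V_{\bar i}\rangle\big)_{\bar i,\bar j}$ is, with respect to a suitable lexicographic order on $E_{n,m}$, triangular with diagonal entries equal to unit monomials in $x$ and $d$. Since the intersection pairing is perfect (\cite{CrM}), an invertible Gram matrix against a basis of $H^{\partial}_{n,m}$ forces $\mathscr{B}_{H_{n,m}}$ to be a basis of $H_{n,m}$.

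The main obstacle is the non-degeneracy computation of the third step. Verifying that the Gram matrix is unitriangular amounts to a delicate geometric count of how the horizontal arcs supporting $\mathbb{U}_{\bar j}$ cross the dual arcs in $\mathscr D_n$, weighted by the monomials that $\phi'$ assigns to the connecting paths; one must show that every entry strictly above the diagonal vanishes and that each diagonal entry is a unit. This is precisely the bookkeeping carried out in \cite{CrM}. Once triangularity is established, freeness, the rank count, and the basis property follow formally, and the braid-equivariance of $\iota$ from Proposition \ref{R:D} then upgrades the ambient action to the representation $L_{n,m}$, completing the proof.
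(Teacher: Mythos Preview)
The paper does not supply a proof of this proposition at all: it is stated as a citation, with the phrase ``Following \cite{CrM}'' signalling that both the basis property and the existence of the braid group action are being imported wholesale from the reference of Anghel--Palmer. There is therefore no argument in the present paper to compare your proposal against.

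That said, your sketch is a faithful outline of the kind of argument one finds in \cite{CrM} and in the surrounding literature on Lawrence representations. The two ingredients you isolate --- (i) lifting the mapping-class-group action to the covering via equivariance of $\iota$, and (ii) proving the basis property by pairing the multifork classes against dual barcode classes and checking that the resulting Gram matrix is unitriangular --- are exactly the strategy of that paper. Your identification of the ``main obstacle'' (the unitriangularity computation) is also accurate: this is where the geometric work lies, and it is precisely what \cite{CrM} carries out. So your proposal is correct in spirit and would, if executed, reproduce the cited argument; but within the scope of the present paper the proposition is treated as a black box and no proof is expected.
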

\begin{prop}(\cite{CrM}) There is a well defined intersection pairing between the two homology groups as follows:
$$\lll \ , \ \ggg:H_{n,m} \otimes H_{n,m}^{\partial}\rightarrow \Z[x^{\pm 1}, d^{\pm 1}].$$
In this formula $d$ should be thought as $-d'$ and we make this change for computational reasons, as we will see below. 
\end{prop}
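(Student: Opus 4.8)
The plan is to realise the pairing as a Poincar\'e--Lefschetz intersection form on the covering $\tilde{C}_{n,m}$, exploiting the fact that the two homology theories involved are set up with \emph{complementary} boundary conditions. The configuration space $C_{n,m}=\Conf_m(\mathscr D_n)$ is a $2m$-real-dimensional oriented manifold, and both $H_{n,m}$ and $H^{\partial}_{n,m}$ sit inside middle-dimensional homology groups of its $\Z\oplus\Z$-cover: the first inside the Borel--Moore homology $H^{\text{lf}}_m$ relative to the part $\pi^{-1}(w)$ of the boundary lying over configurations through $w$, the second inside the ordinary homology relative to the complementary boundary part $\partial^-$. First I would recall the classical statement that, for an oriented $N$-manifold with boundary split as $A\cup B$, the geometric intersection product pairs locally finite (Borel--Moore) $k$-cycles rel $A$ against ordinary $(N-k)$-cycles rel $B$; here $N=2m$ and $k=m$, so the pairing is middle-dimensional. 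Lifting everything to $\tilde{C}_{n,m}$ makes the count equivariant under the deck group $\Z\oplus\Z=\langle x\rangle\oplus\langle d'\rangle$, so that after fixing lifts $\tilde{\mathbf d}$ of the base point the pairing becomes $\Z[x^{\pm1},d'^{\pm1}]$-bilinear.

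Concretely I would define the pairing by a geometric count, mirroring the recipe of Definition \ref{D:int0} but now read in homology. Given classes represented by lifts $\tilde{\mathbb U}_{\bar j}$ (as in Figure \ref{fig3}) and a dual geometric support representing a class of $H^{\partial}_{n,m}$, I would isotope the two collections of curves in $\mathscr D_n$ so that the induced submanifolds in $C_{n,m}$ meet transversely in finitely many points. For each base intersection point $\bar y$ I compare the sheet of $\tilde{\mathbb U}_{\bar j}$ through it with the chosen lift of the dual class: concatenating the path along one support from $\tilde{\mathbf d}$ to $\bar y$ with the return path along the other support produces a loop $\ell_{\bar y}$ in $C_{n,m}$, and $\phi'(\ell_{\bar y})$ records the monomial $x^a (d')^b$ assigned to $\bar y$. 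The pairing is then $\sum_{\bar y}\epsilon_{\bar y}\,\phi'(\ell_{\bar y})$, with $\epsilon_{\bar y}$ the local intersection sign.

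The two properties to establish are \emph{finiteness} and \emph{independence of the chosen representatives}. Finiteness --- which is what guarantees the output is a genuine element of $\Z[x^{\pm1},d'^{\pm1}]$ rather than a power series --- is forced by the complementary boundary conditions: one support is pushed to infinity along $\pi^{-1}(w)$ while the other is constrained away from that part of the boundary, so every transverse intersection is trapped in a compact region of $\tilde{C}_{n,m}$, bounding both the number of points and the exponents $a,b$ that can occur. Independence of representatives I would prove by the standard cobordism argument: a homology between two representatives gives a compactly supported cobordism of geometric supports, and the deck-weighted signed intersection number with a fixed transverse cycle is unchanged across such a cobordism, again because the boundary hypotheses confine all intersections to a compact set where the signed count is a homological invariant. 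Equivariance over the ring is immediate from working on the cover, so bilinearity follows, and the substitution $d=-d'$ at the end is a purely formal change of variable chosen to normalise the signs $\epsilon_{\bar y}$ for the later state-sum formulae.

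I expect the genuine obstacle to be the simultaneous control of finiteness and well-definedness: proving that representatives can always be taken transverse \emph{and} with their intersection confined to a compact region, and that the connecting homologies can be realised with compact support relative to the prescribed boundary pieces. This is exactly the delicate technical point underlying Lawrence's and Bigelow's constructions, and it is what makes the complementary choice of $\pi^{-1}(w)$ versus $\partial^-$ essential rather than cosmetic; everything else reduces to the classical middle-dimensional duality once these transversality-and-compactness lemmas are in place.
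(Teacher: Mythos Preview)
The paper does not give its own proof of this proposition: it is stated as a citation from \cite{CrM}, and the only additional comment in the paper is that the pairing can be computed geometrically via supports, paths to base points, and the local system, with the precise formula deferred to \cite{Cr1}, Proposition 4.4.2. So there is no in-paper argument to compare your proposal against.

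That said, your sketch is the correct outline of how such pairings are constructed in \cite{CrM} and in the Bigelow--Lawrence tradition: middle-dimensional Poincar\'e--Lefschetz duality on an oriented $2m$-manifold with boundary, with the two homology groups carrying complementary relative boundary conditions so that the equivariant intersection number is finite and well defined over the deck-group ring. Your identification of the two genuine technical points---transversality together with compactness of the intersection locus, and independence of representatives via a cobordism argument compatible with the boundary decomposition---is exactly right, and these are indeed the substance of the cited result. The change of variable $d=-d'$ is, as you say, purely a sign normalisation so that the homological pairing matches the combinatorial grading formula \eqref{int0} used elsewhere in the paper.
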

This intersection pairing is defined at the level of homology groups, but it can be computed using the geometric supports, the paths to the base points and the local system, in the base configuration space. The precise formula is presented in \cite{Cr1} Proposition 4.4.2. For homology classes which are given by lifts of the geometric supports that we will work with, the formula for the pairing $\lll , \ggg$  at the homological level is actually the same formula as the one for the graded intersection $\ll , \gg$ from definition \ref{int0} in the situation where $l=0$. 
\clearpage
\subsubsection{Our context} Now we present a way to see our Lagrangian intersection through a state sum of intersection pairings between homology clsses belonging to these homology groups. This is based on two Theorems from \cite{Cr}, which we remind below.
Let us fix $n\in \N$. 
\subsubsection{Open model} First, we work with the configuration space $C_{2n-1,n-1}$ and the associated homology groups, where the parameter $k=n-1$. 
For any multi-index $\bar{i}=(i_1,...,i_{n-1}), i_k \in \{0,1\}, k\in \{1,...,n-1\}$ we look at the two homology classes $\mathscr F'_{\bar{i}} \in H_{2n-1,n-1}, \mathscr L'_{\bar{i}}\in H^{\partial}_{2n-1,n-1}$
obtained by the lifts of the geometric supports together with the paths to the base points from the picture below: 
\begin{center}
${\color{red} \mathscr F'_{\bar{i}} \in H_{2n-1,n-1}} \ \ \ \ \ \ \ \ \ \ \ \ \ \ \  \ \ \ \ \ \ \ \ \ \ \   \ \ \ \ \ \ \ \ \ \ \ \  {\color{dgreen} \mathscr L'_{\bar{i}}\in H^{\partial}_{2n-1,n-1}}.$
\begin{figure}[H]
\centering
\includegraphics[scale=0.4]{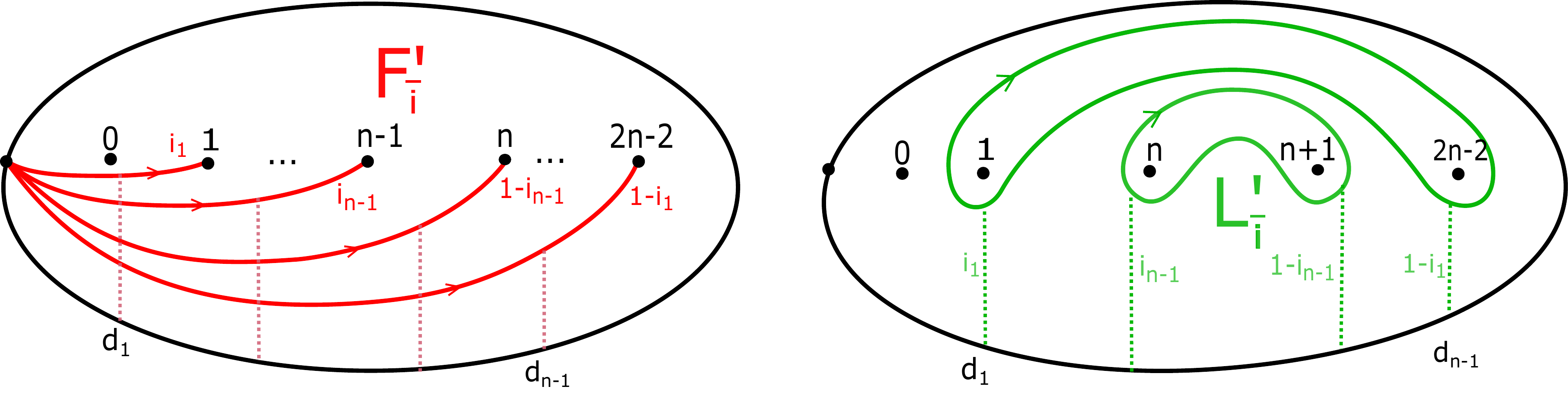}
\caption{State sum model}\label{Statesum'}
\vspace{-25mm}
$$\hspace{15mm} {\color{red}\eta^{F'_{\bar{i}}}} \hspace{53mm} {\color{dgreen}\eta^{L'_{\bar{i}}} }\hspace{14mm}$$
\vspace{2mm}
\end{figure}
\end{center}
\begin{defn}[Specialisations]\label{D:1'''}   Let $c \in \Z$ and consider  the morphism:
\begin{equation}
\begin{aligned}
&\gamma_{c,q,\lambda}: \Z[u^{\pm 1},x^{\pm1},d^{\pm1}]\rightarrow \Z[q^{\pm 1},q^{\pm \lambda}]\\
& \gamma_{c,q,\lambda}(u)= q^{c \lambda}; \ \ \gamma_{c,q,\lambda}(x)= q^{2 \lambda}; \ \ \gamma_{c,q,\lambda}(d)=q^{-2}.
\end{aligned}
\end{equation}
\end{defn}
Using the relations $(5.13),(5.14)$ from the proof of Theorem $5.1$ and Theorem $3.2$ from \cite{Cr}, we have the following model.
\begin{thm}[Unified embedded state sum model \cite{Cr}]\label{Thstate'}

Let $L$ be an oriented link and $\beta_n \in B_n$ such that $L=\hat{\beta}_n$.  Let us consider the polynomial in $3$ variables given by the following state sum:
\begin{equation}\label{Fstate'}
\begin{aligned}
\Lambda'_2(\beta_n)(u,x,d)&:=u^{-w(\beta_n)} u^{-(n-1)} \sum_{i_1,...,i_{n-1}=0}^{1} d^{-\sum_{k=1}^{n-1}i_k} \\
&\lll (\beta_{n} \cup {\mathbb I}_{n-1} ){ \mathscr F'_{\bar{i}}}, {\mathscr L'_{\bar{i}}}\ggg \ \in \Z[u^{\pm1},x^{\pm 1},d^{\pm 1}].
\end{aligned}
\end{equation}
Then we have: 
\begin{equation}\label{eq:4'}
\Omega'(\beta_n)=\Lambda'_2(\beta_n)|_{u=d^{-1}x^{-\frac{1}{2}}}.
\end{equation}
\end{thm}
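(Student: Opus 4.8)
The plan is to derive \eqref{eq:4'} by assembling two facts already available in \cite{Cr} and then carrying out a single change of variable. The two sides of the identity live in different places: $\Lambda'_2(\beta_n)$ is written through the homological pairing $\lll\ ,\ \ggg$ on the Lawrence representations $H_{2n-1,n-1}$ and $H^{\partial}_{2n-1,n-1}$ (the case $l=0$), whereas $\Omega'(\beta_n)$ is by definition the geometric graded intersection $\langle(\beta_n\cup\mathbb I_{2n-1})\cs',\ct'\rangle$ computed in the larger configuration space $\Cp$ with $l=n-1$ blue punctures. The core of the argument is therefore to bridge these two descriptions. First I would invoke the remark following the intersection-pairing proposition: for homology classes arising as lifts of the geometric supports we use, the formula computing $\lll\ ,\ \ggg$ agrees term-by-term with the formula \eqref{int0} defining $\ll\ ,\ \gg$ in the situation $l=0$, with $d$ read as $-d'$. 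This turns the statement into a purely combinatorial comparison of two weighted sums over intersection points.

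Next I would organise the intersection points of $(\beta_n\cup\mathbb I_{2n-1})\cs'\cap\ct'$ according to how the configuration sits relative to the $n-1$ blue punctures. The passage from $\mathscr D_{3n-2}$, with its splitting $3n-2=(2n-1)+(n-1)$ into black and blue punctures, down to the disc $\mathscr D_{2n-1}$ underlying the state sum is exactly the resolution of each blue puncture into the local binary datum recorded by $i_k\in\{0,1\}$; summing over the multi-index $\bar i$ then reconstitutes the full intersection. This is precisely the content of Theorem $3.2$ together with relations $(5.13)$ and $(5.14)$ of \cite{Cr}, which identify $\langle(\beta_n\cup\mathbb I_{2n-1})\cs',\ct'\rangle$ with the weighted state sum $\sum_{\bar i}d^{-\sum_k i_k}\lll(\beta_n\cup\mathbb I_{n-1})\mathscr F'_{\bar i},\mathscr L'_{\bar i}\ggg$. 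The weights $d^{-\sum_k i_k}$, and the signs accompanying them, come from the contribution of the variable $y$ (the monodromy around the blue punctures) under the specialisation $F(y)=-d$ that is built into the morphism $\varphi$ used to grade $\Omega'$.

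Finally I would match the normalising prefactors by direct substitution. Under $u=d^{-1}x^{-\frac12}$ one computes $u^{-w(\beta_n)-(n-1)}=(dx^{\frac12})^{\,w(\beta_n)+n-1}=(d^2x)^{\frac{w(\beta_n)+n-1}{2}}$, which reproduces the leading factor in the definition of $\Omega'(\beta_n)$; the residual factor $d^{-(n-1)}$ appearing in that definition is then absorbed by the $d^{-\sum_k i_k}$ weights and the overall $d$-shift recorded in relations $(5.13)$--$(5.14)$, so that \eqref{eq:4'} follows. The hard part will not be the algebra of prefactors but the faithful translation of the local system: one must check that evaluating $\varphi$ on the loops $l_{\bar x}$ in the large configuration space yields exactly the homological monodromy weights in $x$ and $d$ together with the blue-puncture weights $d^{-\sum_k i_k}$ computed in $C_{2n-1,n-1}$, with all signs consistent under the two conventions $y\mapsto -d$ and $d\mapsto -d'$. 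Once this compatibility of gradings is secured, the theorem is immediate from the cited statements of \cite{Cr}.
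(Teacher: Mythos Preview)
Your proposal is correct and follows essentially the same route as the paper: the paper's own justification of this theorem is simply the sentence ``Using the relations $(5.13),(5.14)$ from the proof of Theorem $5.1$ and Theorem $3.2$ from \cite{Cr}, we have the following model,'' and you have unpacked exactly this citation, explaining how the state sum over $\bar i$ reconstitutes the graded intersection in $\Cp$ and how the substitution $u=d^{-1}x^{-1/2}$ matches the prefactors. One small remark: your sentence that the residual $d^{-(n-1)}$ is ``absorbed by the $d^{-\sum_k i_k}$ weights'' is slightly imprecise as written (the former is a global constant, the latter varies with $\bar i$); what you mean, and what relations $(5.13)$--$(5.14)$ of \cite{Cr} actually provide, is that the decomposition of $\langle(\beta_n\cup\mathbb I_{2n-2})\cs',\ct'\rangle$ into the $\bar i$-indexed pieces produces, for each $\bar i$, a monodromy contribution from the blue punctures which, after multiplying by the global $d^{-(n-1)}$, yields exactly $d^{-\sum_k i_k}$.
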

\begin{coro}(State sum model for the open intersection)\label{CFstate'} As a consequence of the previous formula, we obtain the following sate model for the open intersection form, in the quotient ring:
\begin{equation}
\bar{\Omega}'(\beta_n)(x,d)=(xd^2)^{\frac{w(\beta_n)+(n-1)}{2}} \sum_{i_1,...,i_{n-1}=0}^{1} d^{-\sum_{k=1}^{n-1}i_k} \lll (\beta_{n} \cup {\mathbb I}_{n-1} ){ \mathscr F'_{\bar{i}}}, {\mathscr L'_{\bar{i}}}\ggg\mid_{s}
\end{equation}
Here, the specialisation $s$ is the one defined in \eqref{NN}.
\end{coro}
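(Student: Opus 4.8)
The plan is to read the corollary off directly from Theorem~\ref{Thstate'}, by substituting $u=d^{-1}x^{-\frac12}$ into the state sum $\Lambda'_2(\beta_n)$ and then taking the image of the resulting identity in the quotient ring $\LL$. Since this is advertised as a consequence of the previous formula, the whole argument is really a monomial computation together with a check that passing to the quotient is compatible with the substitution.

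First I would invoke the identity $\Omega'(\beta_n)=\Lambda'_2(\beta_n)|_{u=d^{-1}x^{-\frac12}}$ from \eqref{eq:4'} and expand $\Lambda'_2(\beta_n)$ using its definition \eqref{Fstate'}. The variable $u$ occurs in \eqref{Fstate'} only through the scalar prefactor $u^{-w(\beta_n)}u^{-(n-1)}$, while the state sum $\sum_{\bar i} d^{-\sum_k i_k}\lll(\beta_n\cup\mathbb{I}_{n-1})\mathscr{F}'_{\bar i},\mathscr{L}'_{\bar i}\ggg$ lies in $\Z[x^{\pm1},d^{\pm1}]$ and is independent of $u$. Hence the substitution modifies only the prefactor, which I would simplify via
\[
u^{-\left(w(\beta_n)+(n-1)\right)}\Big|_{u=d^{-1}x^{-\frac12}}=\left(d\,x^{\frac12}\right)^{w(\beta_n)+(n-1)}=(xd^2)^{\frac{w(\beta_n)+(n-1)}{2}},
\]
using $(xd^2)^{\frac12}=x^{\frac12}d$. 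This yields the pre-quotient identity
\[
\Omega'(\beta_n)(x,d)=(xd^2)^{\frac{w(\beta_n)+(n-1)}{2}}\sum_{i_1,\dots,i_{n-1}=0}^{1} d^{-\sum_{k=1}^{n-1}i_k}\,\lll(\beta_n\cup\mathbb{I}_{n-1})\mathscr{F}'_{\bar i},\mathscr{L}'_{\bar i}\ggg
\]
already in $\Z[x^{\pm\frac12},d^{\pm1}]$.

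Finally I would apply the quotient morphism onto $\LL$ to both sides. On the left this gives $\bar\Omega'(\beta_n)(x,d)$ by definition of the overbar; on the right the monomial prefactor is simply carried along, and the state sum is transported by specialising each pairing value $\lll\,\cdot\,,\cdot\,\ggg$ through the morphism $s$ of \eqref{NN}, producing the claimed formula. The only genuine subtlety I anticipate is the sign bookkeeping between the variable $d$ of the graded intersection $\Omega'$ and the variable of the homological pairing $\lll\,,\ggg$ (in which, as recalled in the construction of that pairing, $d$ is to be read as $-d'$): it is precisely this identification that reconciles the defining relation $(d+1)(dx-1)$ of $\LL$ with the relation $(d-1)(xd-1)$ used to define $s$, and hence guarantees that the image of the left-hand side under the quotient onto $\LL$ coincides with applying $s$ term-by-term on the right. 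Everything else is the monomial bookkeeping above.
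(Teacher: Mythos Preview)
Your proposal is correct and follows exactly the route the paper intends: the corollary is stated without proof as an immediate consequence of \eqref{eq:4'}, and you have simply made explicit the substitution $u=d^{-1}x^{-\frac12}$ into \eqref{Fstate'} followed by passage to the quotient. Your extra remark about the $d\leftrightarrow -d'$ bookkeeping is a useful sanity check that the paper itself glosses over (and which also explains the apparent discrepancy between the relation defining $\LL$ and the relation appearing in the definition of $s$).
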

\clearpage 
\subsubsection{Closed model} For the second model, we work with the configuration space $C_{2n,n}$ and the homology groups associated to the parameter $k=n$. 
For any multi-index $\bar{i}=(i_1,...,i_{n}), i_k \in \{0,1\}, k\in \{1,...,n\}$ we consider the homology classes obtained by the lifts of the following geometric supports: 
\begin{figure}[H]
\centering
${\color{red} \mathscr F_{\bar{i}} \in H_{2n,n}} \ \ \ \ \ \ \ \ \ \ \ \ \ \ \  \ \ \text{ and }\ \ \ \ \   \ \ \ \ \ \ \ \ \ \ \ \   {\color{dgreen} \mathscr L_{\bar{i}}\in H^{\partial}_{2n,n}}.$
\vspace{5mm}

\includegraphics[scale=0.4]{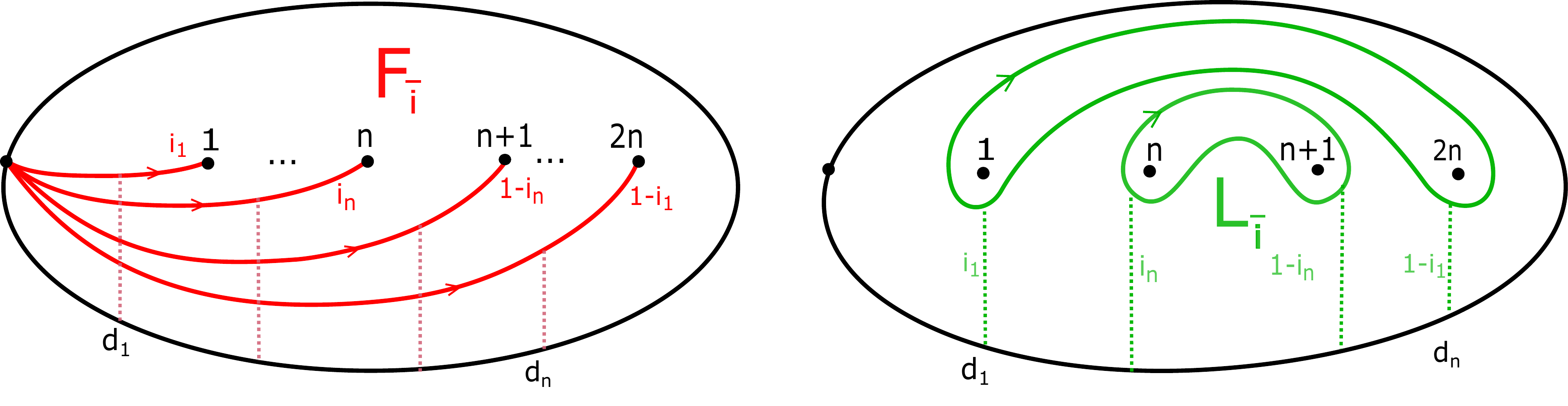}
\caption{State sum model-closed version}\label{Statesum}
\vspace{-25mm}
$$\hspace{15mm} {\color{red}\eta^{F_{\bar{i}}}} \hspace{53mm} {\color{dgreen}\eta^{L_{\bar{i}}} }\hspace{14mm}$$
\vspace{2mm}
\end{figure}

By a similar method as the one used for Theorem \ref{Thstate'}, we deduce a state sum description for the closed intersection model, as follows.
\begin{coro}[Unified embedded state sum model- closed version]\label{Thstate''}

Let $L$ be an oriented link such that $L=\hat{\beta}_n$ for $\beta_n \in B_n$. Let us consider the following state sum:
\begin{equation}\label{Fstate}
\begin{aligned}
\Lambda_2(\beta_n)(u,x,d)&:=u^{-w(\beta_n)} u^{-n} \sum_{i_1,...,i_{n}=0}^{1} d^{-\sum_{k=1}^{n}i_k} \\
&\lll (\beta_{n} \cup {\mathbb I}_{n} ){ \mathscr F_{\bar{i}}}, {\mathscr L_{\bar{i}}}\ggg \ \in \Z[u^{\pm1},x^{\pm 1},d^{\pm 1}].
\end{aligned}
\end{equation}
Then we have:
\begin{equation}\label{eq:4}
\Omega(\beta_n)=\Lambda_2(\beta_n)|_{u=d^{-1}x^{-\frac{1}{2}}}.
\end{equation}
\end{coro}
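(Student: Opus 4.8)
The plan is to mirror the argument establishing Theorem~\ref{Thstate'} for the open model, transplanting it from the configuration space $C_{2n-1,n-1}$ to the closed setting $C_{2n,n}$, where one additional particle (and one additional green circle) is present. The conceptual backbone is the principle recorded in Section~\ref{S:2}: for homology classes arising as lifts of the geometric supports we use, the homological intersection pairing $\lll\,,\,\ggg$ is computed by exactly the same formula as the geometric graded intersection $\ll\,,\,\gg$ of Definition~\ref{D:int0} (in the case $l=0$). Thus the entire problem is to realise the single geometric intersection $\langle(\beta_n\cup\mathbb I_{2n})\cs,\ct\rangle$ in $\Conf_n(\mathscr D_{3n})$ as the promised sum of $2^n$ homological pairings in $\Conf_n(\mathscr D_{2n})$, and then to reconcile the normalisation prefactors under $u=d^{-1}x^{-\frac12}$.

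First I would set up the state-sum decomposition geometrically. Each of the $n$ green circles of $\ct$ encircles one pair of punctures $(k,2n+1-k)$, and precisely one component $x_{\iota(k)}$ of every intersection point $\bar x$ sits on that circle, either on its braided (left) side or on its closure (right) side. Recording this binary choice for each circle produces a multi-index $\bar i=(i_1,\dots,i_n)\in\{0,1\}^n$ and partitions $I_{\beta_n}$ accordingly. For a fixed $\bar i$ I would then identify the corresponding subcollection of intersection points, together with its paths to the base points and its $\varphi$-grading, with the data computing $\lll(\beta_n\cup\mathbb I_n)\mathscr F_{\bar i},\mathscr L_{\bar i}\ggg$ in $C_{2n,n}$: the branches of $\cs$ and $\ct$ selected by $\bar i$ become the geometric supports of $\mathscr F_{\bar i}$ and $\mathscr L_{\bar i}$ respectively, and the $n$ blue punctures of $\mathscr D_{3n}$ are exactly what is collapsed when passing to $\mathscr D_{2n}$. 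Summing over $\bar i$ rebuilds the graded intersection as a state sum of homological pairings.

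The role of the local system is where I would locate the main subtlety. The $n$ blue punctures carry the variable $y$, and the change of coefficients $F$ with $F(y)=-d$ is precisely the quantum-trace device that turns a loop around a blue puncture into a $d$-weight; tracking this through the partition produces the state-sum weight $d^{-\sum_k i_k}$ (the signs being absorbed into the local intersection signs $\epsilon_{x_i}$, consistently with the convention $d=-d'$ for the pairing). Concretely, I expect the identity to reduce to
\begin{equation*}
\langle(\beta_n\cup\mathbb I_{2n})\cs,\ct\rangle=d^{\,n}\sum_{i_1,\dots,i_n=0}^{1} d^{-\sum_{k=1}^n i_k}\;\lll(\beta_n\cup\mathbb I_n)\mathscr F_{\bar i},\mathscr L_{\bar i}\ggg,
\end{equation*}
after which the prefactor check is routine: multiplying by $(d^2x)^{\frac{w(\beta_n)+n}{2}}d^{-n}=d^{\,w(\beta_n)}x^{\frac{w(\beta_n)+n}{2}}$ gives $\Omega(\beta_n)=d^{\,w(\beta_n)+n}x^{\frac{w(\beta_n)+n}{2}}\sum_{\bar i} d^{-\sum_k i_k}\lll\cdots\ggg$, which matches $\Lambda_2(\beta_n)|_{u=d^{-1}x^{-\frac12}}$ since $u^{-w(\beta_n)-n}\big|_{u=d^{-1}x^{-1/2}}=d^{\,w(\beta_n)+n}x^{\frac{w(\beta_n)+n}{2}}$.

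The hard part will be the bookkeeping in the second and third steps: verifying that the paths $\eta_k,\eta'_k$ to the base points, once split according to $\bar i$, are genuinely homotopic to the paths $\eta^{F_{\bar i}}$ and $\eta^{L_{\bar i}}$ defining the homology classes, so that the $\varphi$-gradings of individual intersection points match the deck-transformation weights of the pairing term by term. This is the step that makes the ``same formula'' principle applicable here, and it is essentially the closed-model analogue of the relations $(5.13)$ and $(5.14)$ invoked for Theorem~\ref{Thstate'}; once it is checked, the remaining normalisation and summation are immediate.
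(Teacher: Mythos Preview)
Your proposal is correct and follows essentially the same approach as the paper: the paper's own justification is the single sentence ``By a similar method as the one used for Theorem~\ref{Thstate'}, we deduce a state sum description for the closed intersection model,'' and your plan is precisely an elaboration of that similar method (partitioning intersection points by the left/right choice on each green circle, removing the blue punctures via the $y\mapsto -d$ change of coefficients to produce the $d^{-\sum_k i_k}$ weights, and matching prefactors under $u=d^{-1}x^{-1/2}$). Your normalisation check and the identification of the hard bookkeeping step with the closed analogue of relations~(5.13)--(5.14) from \cite{Cr} are both accurate.
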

\begin{coro}[State sum model for the closed intersection]\label{CFstate} From this formula we conclude the following sate model for the closed intersection form, in the quotient ring:
\begin{equation}
\bar{\Omega}(\beta_n)(x,d)=(xd^2)^{\frac{w(\beta_n)+n}{2}} \sum_{i_1,...,i_{n}=0}^{1} d^{-\sum_{k=1}^{n}i_k} \lll (\beta_{n} \cup {\mathbb I}_{n} ){ \mathscr F'_{\bar{i}}}, {\mathscr L'_{\bar{i}}}\ggg\mid_{s} \in \LL.
\end{equation}
\end{coro}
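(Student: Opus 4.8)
The plan is to derive Corollary \ref{CFstate} directly from the preceding Corollary \ref{Thstate''} by substituting the prescribed value of $u$ into the state sum and then projecting into the quotient ring $\LL$. No new geometry is needed: the homological state sum description of $\Omega(\beta_n)$ has already been established, so the present statement is a purely algebraic consequence obtained by specialising one variable and applying the quotient morphism.

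First I would invoke the identity \eqref{eq:4}, namely $\Omega(\beta_n)=\Lambda_2(\beta_n)|_{u=d^{-1}x^{-\frac{1}{2}}}$, which holds in $\Z[x^{\pm\frac{1}{2}},d^{\pm1}]$. Substituting $u=d^{-1}x^{-\frac{1}{2}}$ into the definition \eqref{Fstate} of $\Lambda_2$, I note that the only $u$-dependence sits in the overall prefactor $u^{-w(\beta_n)}u^{-n}=u^{-(w(\beta_n)+n)}$, while the state sum $\sum_{\bar i} d^{-\sum_k i_k}\lll(\beta_n\cup\mathbb I_n)\mathscr F_{\bar i},\mathscr L_{\bar i}\ggg$ carries no $u$. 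Evaluating the prefactor gives
$$
\left(d^{-1}x^{-\frac{1}{2}}\right)^{-(w(\beta_n)+n)}=\left(d\,x^{\frac{1}{2}}\right)^{w(\beta_n)+n}=(xd^2)^{\frac{w(\beta_n)+n}{2}},
$$
so that $\Omega(\beta_n)=(xd^2)^{\frac{w(\beta_n)+n}{2}}\sum_{\bar i} d^{-\sum_k i_k}\lll(\beta_n\cup\mathbb I_n)\mathscr F_{\bar i},\mathscr L_{\bar i}\ggg$ in the Laurent polynomial ring.

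Finally I would apply the quotient morphism $s$ of \eqref{NN} to both sides. Since $s$ is a ring homomorphism it commutes with the scalar prefactor and with the finite sum, and by definition $s(\Omega(\beta_n))=\bar\Omega(\beta_n)$; this yields exactly the asserted formula in $\LL$. The computation is entirely formal once \eqref{eq:4} is in hand.

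The one point that genuinely requires care, and which I regard as the main (mild) obstacle, is the bookkeeping of the sign convention relating the homological variable to the intersection variable: the pairing $\lll\,,\ggg$ is defined over the covering with deck variable $d'$, and throughout this section $d$ stands for $-d'$. Consequently one must confirm that the relation defining the target of $s$ in \eqref{NN}, written as $(d-1)(xd-1)$, matches the defining relation $(d+1)(dx-1)$ of $\LL$ under $d=-d'$, so that the image of the state sum indeed lands in $\LL$ as stated. Verifying this compatibility is the only non-routine step; everything else is the substitution and prefactor simplification above.
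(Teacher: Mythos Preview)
Your proposal is correct and matches the paper's own treatment: the paper gives no separate proof of Corollary~\ref{CFstate}, stating only ``From this formula we conclude\ldots'', so the argument is exactly the substitution $u=d^{-1}x^{-1/2}$ into \eqref{Fstate} followed by applying the quotient map, precisely as you outline. Your extra caution about the $d=-d'$ convention is understandable but unnecessary here: the pairing $\lll\,,\,\ggg$ already outputs values in $\Z[x^{\pm1},d^{\pm1}]$ (with the variable $d$, not $d'$), and the discrepancy you noticed between $(d-1)(xd-1)$ in \eqref{NN} and $(d+1)(dx-1)$ in the definition of $\LL$ is simply a typographical slip in the paper rather than a substantive sign issue to be resolved.
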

\section{Invariance under the Markov moves} \label{S:4}
In this part we prove Theorem \ref{THEOREM} and show that it is enough to quotient towards $\LL$ in order to have a link invariant.
More specifically, we will present a topological proof of the invariance of the closed model $\bar{\Omega}(L)$. We split the proof into two main steps.
\begin{itemize}
\setlength\itemsep{-0.2em}
\item[•] The first step concerns the invariance with respect to the Markov II move. For this, we compute the intersection pairing $\Omega(\beta_n)$ before and after pursuing a stabilisation move, and show that the two formulas become equal if we impose the relation $(d+1)(dx-1)=0$ (which means precisely to consider the quotient morphism and work over $\LL$). 
\item[•]Secondly, we prove that the intersection form is invariant with respect to the Markov I move. We do this by proving that if we pass to the quotient $\LL$, the image $\bar{\Omega}(\beta_n)$ can be interpreted as a sum of traces of braid group representations. Then we conclude that this sum is invariant with respect to braid conjugation.
\end{itemize}

\subsection{Markov II} In this part we want to prove that the intersection $\bar{\Omega}$ is invariant under stabilisations:
\begin{equation}
\bar{\Omega}(\beta_n)=\bar{\Omega}(\sigma^{\pm1}_n\circ \beta_n).
\end{equation}
We will do this by checking this move via the state sum $\Lambda_2$. More precisely, we compute which relation is needed in order to obtain the same result before and after the stabilisation. Following relation \eqref{Fstate} we have: 
\begin{equation}\label{equations}
\begin{aligned}
&\Lambda_2(\beta_n)(u,x,d)=u^{-w(\beta_n)} u^{-n} \sum_{i_1,...,i_{n}=0}^{1} d^{-\sum_{k=1}^{n}i_k} \lll (\beta_{n} \cup {\mathbb I}_{n} ){ \mathscr F'_{\bar{i}}}, {\mathscr L'_{\bar{i}}}\ggg.\\
&\Lambda_2(\sigma^{\pm1}_n\circ\beta_n)(u,x,d)=u^{-w(\sigma^{\pm1}_n\circ\beta_n)} u^{-(n+1)} \sum_{i_1,...,i_{n}=0}^{1} d^{-\sum_{k=1}^{n}i_k}\cdot \\
&\hspace{15mm}\cdot \left(\lll (\sigma^{\pm1}_n\circ\beta_{n} \cup {\mathbb I}_{n+1} ){ \mathscr F'_{\bar{i},0}}, {\mathscr L'_{\bar{i},0}}\ggg
+ d^{-1}\lll (\sigma^{\pm1}_n\circ\beta_{n} \cup {\mathbb I}_{n+1} ){ \mathscr F'_{\bar{i},1}}, {\mathscr L'_{\bar{i},1}}\ggg \right).
\end{aligned}
\end{equation}
In the next part, for a set of indices $\bar{j}=(j_1,...,j_n)$ we denote their sum by: 
$$w(\bar{j}):=j_1+...+j_n.$$
Now we fix an index $\bar{i}$ and we look at the terms from the above state sums that are associated to this index. Let us denote by $m=w(\bar{i})$. From the structure of the homology group $H_{2n,m}$, presented in Proposition \ref{gen}, there exists a collection of coefficients $\alpha_{\bar{j}}\in \Z[x^{\pm1}, d^{\pm1}]$ such that:
\begin{equation}\label{coeff2}
(\beta_{n} \cup {\mathbb I}_{n} ){ \mathscr F'_{\bar{i}}}=\sum_{\substack{\bar{j}=(j_1,...,j_{n})\in E_{n,m}}}\alpha_{\bar{j}} \cdot \mathscr U'_{\bar{j},1-\bar{i}}
\end{equation}
(here we used Notation \ref{ind}).

Then, in the first state sum, the term associated to the index $\bar{i}$ can be expressed as:
\begin{equation}
d^{-m}\sum_{\substack{\bar{j}\in E_{n,m}}} \alpha_{\bar{j}} \cdot \lll \mathscr U'_{\bar{j},1-\bar{i}}, {\mathscr L'_{\bar{i}}}\ggg.
\end{equation}
For the next part we are interested in the intersection with the dual class.
\begin{prop} We have the following property of the intersection form:
\begin{equation}\label{prop}
\hspace{-3mm}\lll  \mathscr U'_{\bar{j},1-\bar{i}}, {\mathscr L'_{\bar{i}}}\ggg=\begin{cases}
1, \text{ if } (j_1,...,j_{n})=(i_1,...,i_{n})\\
0, \text{ otherwise}.
\end{cases}
\end{equation}
\end{prop}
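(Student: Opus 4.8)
The plan is to transport the homological pairing to a transverse intersection of curves in the punctured disc and then to show, arc by arc, that the count is governed by a matching condition which forces $\bar j=\bar i$.

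\emph{Reduction to the base.} First I would rewrite $\lll \mathscr U'_{\bar j,1-\bar i},\mathscr L'_{\bar i}\ggg$ using the geometric formula for the intersection pairing from \cite{Cr1}, which coincides with the graded intersection $\ll\,,\,\gg$ of Definition \ref{D:int0} in the case $l=0$. Concretely this expresses the pairing as $\sum_{\bar x}\big(\prod_k \epsilon_{x_k}\big)\,\phi'(l_{\bar x})$, the sum running over the points $\bar x$ of the transverse intersection, in $C_{2n,n}$, of the submanifold $\mathbb U_{\bar j,1-\bar i}$ underlying $\mathscr U'_{\bar j,1-\bar i}$ with the geometric support of $\mathscr L'_{\bar i}$. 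The problem is thereby reduced to counting configurations of curves in the disc.

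\emph{The matching dichotomy.} Recall from Figure \ref{Statesum} that the red support of $\mathscr U'_{\bar j,1-\bar i}$ carries $j_1,\dots,j_n$ particles on horizontal arcs over the first block of punctures and the complementary amounts $1-i_k$ over the paired punctures of the second block, while the green support of $\mathscr L'_{\bar i}$ consists of $n$ arcs, the $k$-th joining puncture $k$ to the puncture $2n+1-k$ with which it is closed up. Each such green arc can meet the red support only in a single admissible slot, and the side on which this slot lies is prescribed by $i_k$. Since $\bar j\in E_{n,m}$ with $m=w(\bar i)$ and the pair of punctures associated to the $k$-th green arc holds a total of $j_k+(1-i_k)$ particles, a simultaneous intersection point on every arc exists precisely when $j_k=i_k$ for all $k$; otherwise some green arc either finds no particle to meet or meets it on the wrong side, the supports are disjoint, and the pairing vanishes. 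This gives the off-diagonal case.

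\emph{The diagonal term and the main obstacle.} When $\bar j=\bar i$ the intersection reduces to a single transverse configuration, one particle per green arc. Because the arcs of Figure \ref{Statesum} together with the chosen paths to the base points run along the boundary without encircling any puncture and without permuting the particles, the associated loop $l_{\bar x}$ is trivial for the coordinates seen by $\phi'$, so $\phi'(l_{\bar x})=1$, and the local signs $\epsilon_{x_k}$ are all $+1$; hence the contribution is exactly $1$, as asserted. I expect the main obstacle to be the geometric core of the dichotomy, namely the verification that the $k$-th green arc admits exactly one admissible slot whose side is dictated by $i_k$, with no spurious intersection points coming from particles on the other block. This is the combinatorial heart of the duality between the basis $\mathscr B_{H_{2n,n}}$ of Proposition \ref{gen} and the dual family $\{\mathscr L'_{\bar i}\}$, and it depends on the precise staircase arrangement of the curves in Figure \ref{Statesum}; the triviality of the grading in the diagonal case is a secondary point, settled by following the base-point paths explicitly.
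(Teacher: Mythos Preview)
The paper does not prove this proposition; it simply refers to an analogous argument, Lemma~7.7.1 of \cite{Cr1}. Your proposal therefore supplies what the paper omits, and its structure---reducing the homological pairing to a transverse intersection count in the base configuration space and then running an arc-by-arc matching---is exactly the content of that referenced duality lemma.

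One point to tighten. Your description of the green support of $\mathscr L'_{\bar i}$ as arcs ``joining puncture $k$ to the puncture $2n+1-k$'' does not match the barcode picture of Figure~\ref{Statesum}: the dual classes there are arcs from the upper boundary terminating at a \emph{single} puncture, and it is the choice of which puncture in the pair $\{k,\,2n+1-k\}$ each one reaches that is governed by $i_k$. This is what makes $\mathscr L'_{\bar i}$ genuinely depend on $\bar i$ (your description, read literally, does not), and it is the real mechanism behind your phrase ``the side on which this slot lies is prescribed by $i_k$''. With this correction the matching dichotomy is clean: when the $k$-th barcode targets puncture $k$ it meets only the red arc there, forcing exactly one particle on it and hence $j_k=i_k$; when it targets puncture $2n+1-k$ that red arc already carries its $1-i_k$ particle, while no barcode reaches red arc $k$, forcing $j_k=0=i_k$. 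The diagonal evaluation to $1$ then goes through as you indicate, since the base-point paths and supports lie along the boundary and enclose no puncture. So the argument is correct in substance; only the description of the dual support needs to be brought in line with the figure.
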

The proposition follows by an analog argument as the one from Lemma 7.7.1 from \cite{Cr1}. This shows that if
$$\lll \mathscr U'_{\bar{j},1-\bar{i}}, {\mathscr L'_{\bar{i}}}\ggg\neq 0$$
then we have to have $\bar{j}=\bar{i}$. So, in the first state sum, associated to the index $\bar{i}$ we have:
\begin{equation}\label{1}
\alpha_{\bar{i}} \cdot \lll \mathscr U'_{\bar{i},1-\bar{i}}, {\mathscr L'_{\bar{i}}}\ggg.
\end{equation}

For the next part we look at the classes from the second state sum (which are associated to a set of $(n+1)$ indices). Using relation \eqref{coeff2} we have the decomposition from below:
\begin{equation}\label{coeff3}
\left((\beta_{n}\cup{\mathbb I})\cup {\mathbb I}_{n+1}\right){ \mathscr F'_{\bar{i},\epsilon}}=\sum_{\substack{\bar{j}=(j_1,...,j_{n})\in E_{n,m}}}\alpha_{\bar{j}} \cdot \mathscr U'_{\bar{j},\epsilon,1-\epsilon,1-\bar{i}}, \forall \epsilon \in \{0,1\}.
\end{equation}
Then, in the second state sum, the term associated to the index $\bar{i}$ has the following formula:
\begin{equation}
d^{-m}\sum_{\substack{\bar{j}\in E_{n,m}}} \alpha_{\bar{j}} \cdot \left(\lll (\sigma^{\pm1}_n\cup {\mathbb I}_{n+1}) \mathscr U'_{\bar{j},0,1,1-\bar{i}}, {\mathscr L'_{\bar{i},0}}\ggg + d^{-1} \lll (\sigma^{\pm1}_n\cup {\mathbb I}_{n+1}) \mathscr  U'_{\bar{j},1,0,1-\bar{i}}, {\mathscr L'_{\bar{i},1}}\ggg \right).
\end{equation}
We remark that the $(\sigma^{\pm1}_n\cup {\mathbb I}_{n+1})$-action on $\mathscr U'_{\bar{j},0,1,1-\bar{i}}$ will be a linear combination of classes which correspond to indices that have the first components $j_1,...,j_{n-1}$. Just the indices which are associated to the $n^{th}$ and $(n+1)^{st}$ positions can be modified. 
Since we are intersecting with the dual class $\mathscr L'_{\bar{i},0}$, using the property from relation \eqref{prop}, we conclude that the above intersections give a non-trivial term just in the situation where:
\begin{equation}
(j_1,...,j_{n-1})=(i_1,...,i_{n-1}).
\end{equation}
Since $j_1+...+j_n=i_1+...+i_n=m$, we conclude that actually the two indexing sets have to coincide:
\begin{equation}
(j_1,...,j_{n})=(i_1,...,i_{n}), \text{ and so } \bar{j}=\bar{i}.
\end{equation}
As a conclusion, in the second state sum, corresponding to the index $\bar{i}$ we have:
\begin{equation}\label{2}
\alpha_{\bar{i}} \cdot \left(\lll (\sigma^{\pm1}_n\cup {\mathbb I}_{n+1}) \mathscr  U'_{\bar{i},0,1,1-\bar{i}}, {\mathscr L'_{\bar{i},0}}\ggg + d^{-1} \lll (\sigma^{\pm1}_n\cup {\mathbb I}_{n+1}) \mathscr  U'_{\bar{i},1,0,1-\bar{i}}, {\mathscr L'_{\bar{i},1}}\ggg \right).
\end{equation}
Putting together relations \eqref{equations}, \eqref{1} and \eqref{2} we conclude that the invariance at the Markov II move is true if for any $\bar{i}=(i_1,...,i_n)$ with $i_1,...,i_n\in \{0,1\}$:
\begin{equation}
\begin{aligned}
&\lll \mathscr U'_{\bar{i},1-\bar{i}}, {\mathscr L'_{\bar{i}}}\ggg=\\
&=u^{\mp-1} \cdot \left(\lll (\sigma^{\pm1}_n\cup {\mathbb I}_{n+1}) \mathscr  U'_{\bar{i},0,1,1-\bar{i}}, {\mathscr L'_{\bar{i},0}}\ggg + d^{-1} \lll (\sigma^{\pm1}_n\cup {\mathbb I}_{n+1}) \mathscr  U'_{\bar{i},1,0,1-\bar{i}}, {\mathscr L'_{\bar{i},1}}\ggg \right).
\end{aligned}
\end{equation}
On the other hand, all the coefficients that appear at the intersections $$\lll (\sigma^{\pm1}_n\cup {\mathbb I}_{n+1}) \mathscr  U'_{\bar{i},0,1,1-\bar{i}}, {\mathscr L'_{\bar{i},0}}\ggg \text { and }\lll (\sigma^{\pm1}_n\cup {\mathbb I}_{n+1}) \mathscr  U'_{\bar{j},1,0,1-\bar{i}}, {\mathscr L'_{\bar{i},1}}\ggg$$ come from the intersection points which belong to the two inner green circles, all the other points contribute by coefficients which are $1$. 

From this remark, we conclude that is enough to check the Markov II move for braids with two strands (which correspond to $n=1$). This means that the following condition should be satisfied:
\begin{equation}
\begin{aligned}
&\lll \mathscr U'_{i,1-i}, {\mathscr L'_{i}}\ggg=\\
&=u^{\mp-1} \cdot \left(\lll (\sigma^{\pm1}\cup {\mathbb I}_{2}) \mathscr  U'_{i,0,1,1-i}, {\mathscr L'_{i,0}}\ggg + d^{-1} \lll (\sigma^{\pm1}\cup {\mathbb I}_{2}) \mathscr  U'_{i,1,0,1-i}, {\mathscr L'_{i,1}}\ggg \right).
\end{aligned}
\end{equation}
for any $i\in \{0,1\}$.
So, we have two conditions:
\begin{equation}\label{conditions}
\hspace{-3mm}\begin{cases}
\begin{aligned}
\lll \mathscr U'_{0,1},& {\mathscr L'_{0}}\ggg=\\
&=u^{\mp-1}\left(\lll (\sigma^{\pm}\cup {\mathbb I}_{2}) \mathscr  U'_{0,0,1,1}, {\mathscr L'_{0,0}}\ggg + d^{-1} \lll (\sigma^{\pm1}\cup {\mathbb I}_{2}) \mathscr  U'_{0,1,0,1}, {\mathscr L'_{0,1}}\ggg \right).\\
\lll \mathscr U'_{1,0},& {\mathscr L'_{1}}\ggg=\\
&=u^{\mp-1}\left(\lll (\sigma^{\pm1}\cup {\mathbb I}_{2}) \mathscr  U'_{1,0,1,0}, {\mathscr L'_{1,0}}\ggg + d^{-1} \lll (\sigma^{\pm1}\cup {\mathbb I}_{2}) \mathscr  U'_{1,1,0,0}, {\mathscr L'_{1,1}}\ggg \right).
\end{aligned}
\end{cases}
\end{equation}
For the left hand side of the above equations, we have the intersections from below:
\begin{figure}[H]
\centering
$$ \ \ \ \ \ \lll \mathscr U'_{0,1}, {\mathscr L'_{0}}\ggg=1 \ \ \ \ \ \ \ \ \ \ \ \ \ \ \ \ \ \ \ \ \ \  \lll \mathscr U'_{1,0}, {\mathscr L'_{1}}\ggg=1 \ \ \ \ \ \ $$
\hspace{-3mm}\includegraphics[scale=0.4]{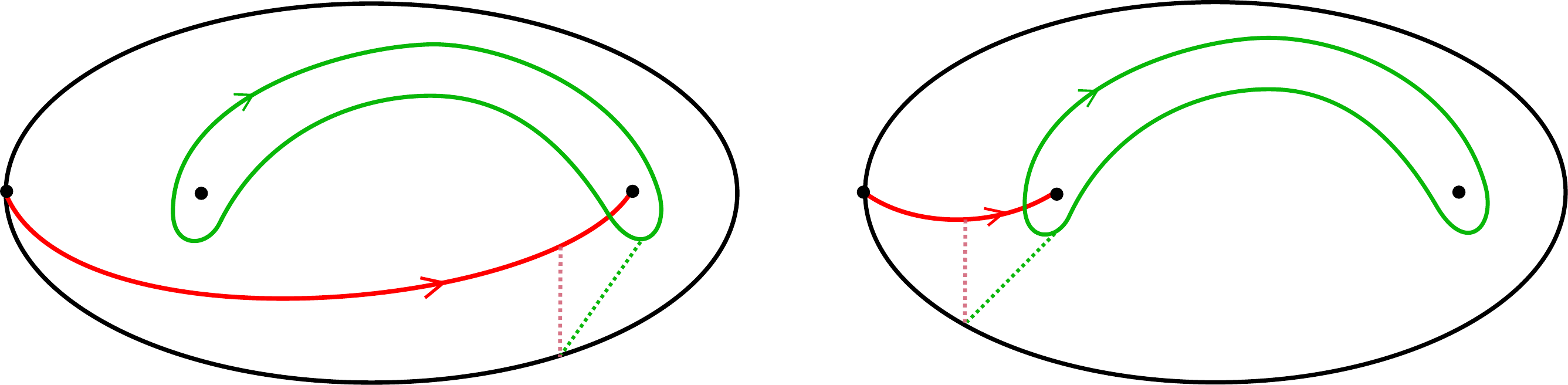}
\caption{Coefficients before the stabilisation}
\end{figure}
In the following part we investigate the conditions from relation \eqref{conditions} for two cases, given by positive stabilisation or the negative stabilisation.  
\subsubsection{Positive stabilisation} We compute the intersections which appear in condition \eqref{conditions} for the case when we act with the positive generator $\sigma$. We have the following intersections:
\begin{figure}[H]
\centering
$$\lll (\sigma\cup {\mathbb I}_{2}) \mathscr  U'_{0,0,1,1}, {\mathscr L'_{0,0}}\ggg=1 \ \ \ \ \ \ \ \ \ \ \ \ \ \ \ \ \  \lll (\sigma\cup {\mathbb I}_{2}) \mathscr  U'_{1,0,1,0}, {\mathscr L'_{1,0}}\ggg=0$$
\includegraphics[scale=0.3]{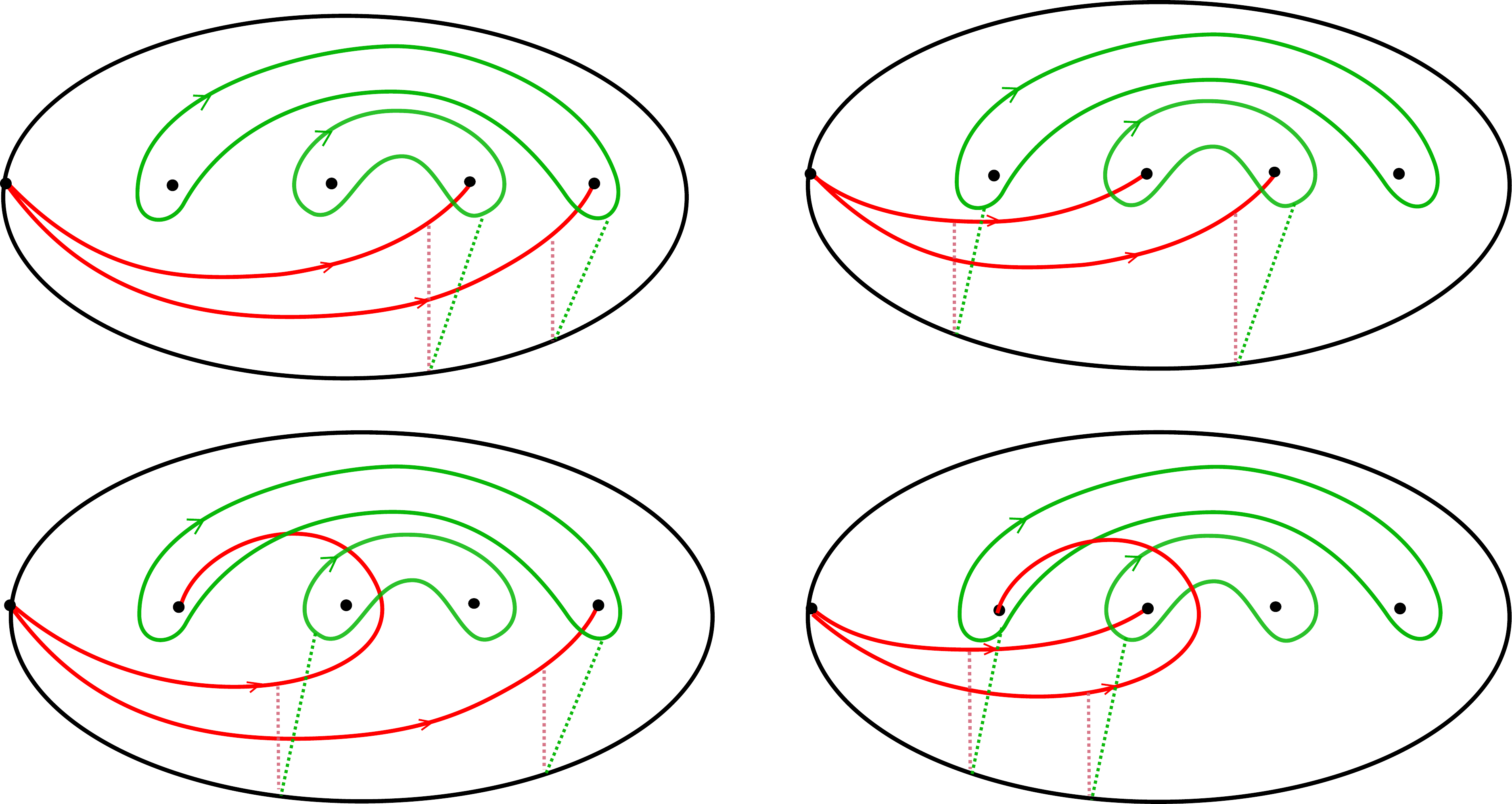}
$$\lll (\sigma\cup {\mathbb I}_{2}) \mathscr  U'_{0,1,1,0}, {\mathscr L'_{0,1}}\ggg=1-x^{-1} \ \ \ \ \ \ \ \ \ \ \ \ \ \ \ \ \  \lll (\sigma \cup {\mathbb I}_{2}) \mathscr  U'_{1,1,0,0}, {\mathscr L'_{1,1}}\ggg=d^{-1}x^{-1}$$
\caption{Coefficients of the positive stabilisation}
\end{figure}
We obtain the system:
\begin{equation}
\begin{cases}
1=u^{-2}\left(1+d^{-1}(1-x^{-1})\right)\\
1=u^{-2}d^{-2}x^{-1}.
\end{cases}
\end{equation}
This is equivalent to:
\begin{equation}
\begin{cases}
1=d^2x\left(1+d^{-1}(1-x^{-1})\right)\\
u^{-2}=d^2x.
\end{cases}
\end{equation}
Then, the first condition becomes:
\begin{equation}
\begin{aligned}
1=d^2x+dx-d & \Leftrightarrow \ \ d^2x-1+dx-d=0 \ \ \Leftrightarrow \ \ dx(d+1)-(d+1)=0 \ \ \Leftrightarrow\\
& \Leftrightarrow (d+1)(dx-1)=0.
\end{aligned}
\end{equation}
This is precisely the condition that we have in the quotient ring, so this equation is satisfied.
For the second one, we choose $u=d^{-1}x^{-\frac{1}{2}}$, which is precisely the specialisation used in relation \eqref{eq:4}. 

\subsubsection{Negative stabilisation} In this part we investigate the invariance at the negative stabilisation and check relations \eqref{conditions} in this situation. We have the coefficients given by the following intersections:
\begin{figure}[H]
\centering
$$\lll (\sigma^{-1}\cup {\mathbb I}_{2}) \mathscr  U'_{0,0,1,1}, {\mathscr L'_{0,0}}\ggg=1 \ \ \ \ \ \ \ \ \ \ \ \ \ \ \ \ \  \lll (\sigma^{-1}\cup {\mathbb I}_{2}) \mathscr  U'_{1,0,1,0}, {\mathscr L'_{1,0}}\ggg=1-x$$
\includegraphics[scale=0.3]{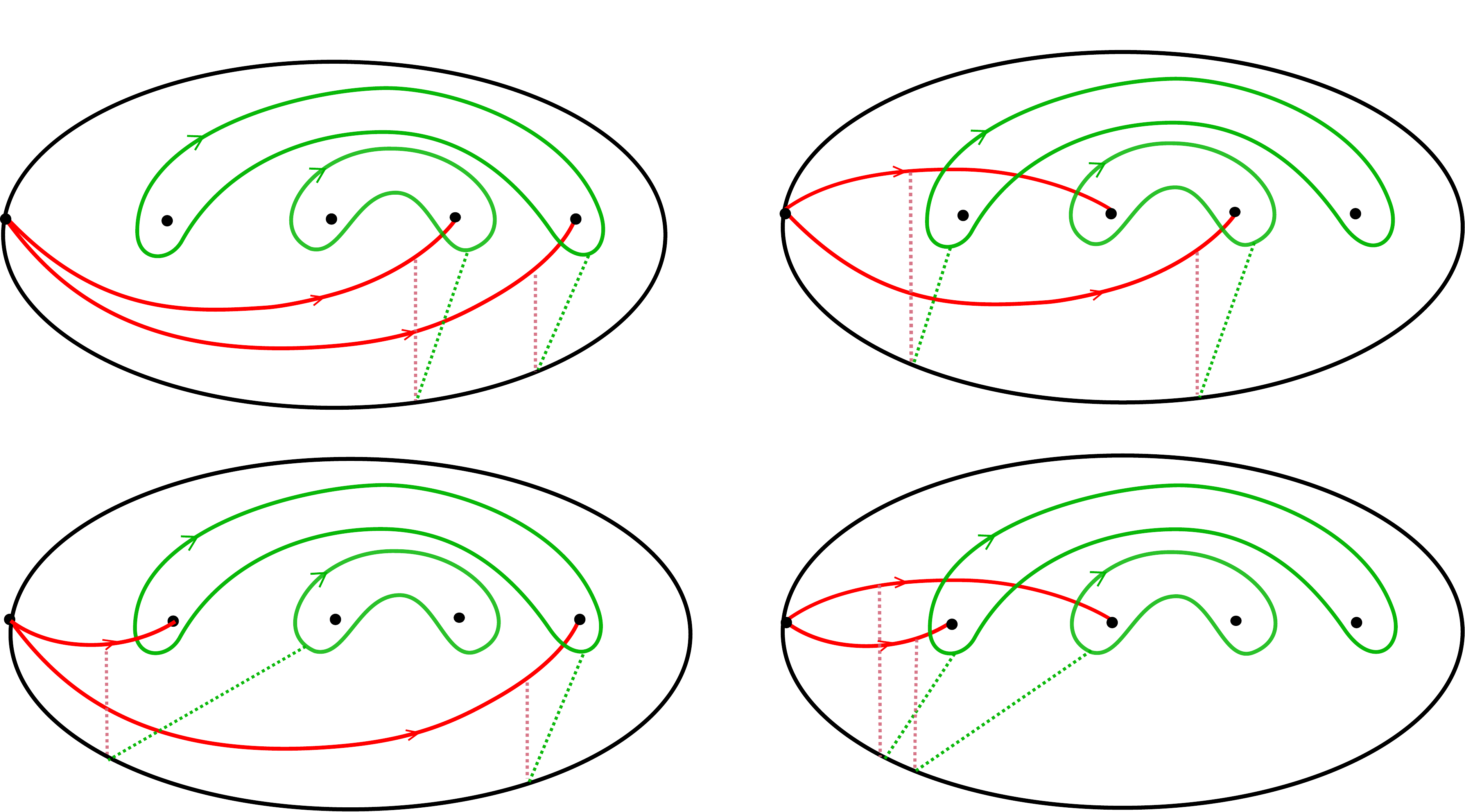}
$$\lll (\sigma^{-1}\cup {\mathbb I}_{2}) \mathscr  U'_{0,1,1,0}, {\mathscr L'_{0,1}}\ggg=0 \ \ \ \ \ \ \ \ \ \ \ \ \ \ \ \ \  \lll (\sigma^{-1} \cup {\mathbb I}_{2}) \mathscr  U'_{1,1,0,0}, {\mathscr L'_{1,1}}\ggg=dx$$
\caption{Coefficients of the negative stabilisation}
\end{figure}
We obtain the relations:
\begin{equation}
\begin{cases}
1=1+d^{-1}\cdot 0\\
1=1-x+d^{-1}(dx).
\end{cases}
\end{equation}
These are both true, so we conclude that the negative stabilisation is always satisfied (even before we quotient to the quotient ring). 

This concludes the invariance of the intersection $\bar{\Omega}(\beta_n)$ with respect to the second Markov move. For the intersection form $\Omega'(\beta_n)$ we can pursue an analog argument, this time having $n-1$ particles in the configuration space, and conclude in a similar way that $\bar{\Omega}'(\beta_n)$ is invariant at stabilisations.

\subsection{Markov I}In this part we aim to prove the invariance of the form $\bar{\Omega}$ with respect to braid conjugation. Let $\beta_n,\gamma_n \in B_n$. We want to show that:
\begin{equation}\label{eq:3}
\bar{\Omega}\left(\beta_n\right)(x,d)=\bar{\Omega}\left(\gamma_n \circ \beta_n \circ \gamma^{-1}_n\right)(x,d).
\end{equation}
We follow the formula presented in Corollary \ref{Thstate''}, use the intersection form $\Lambda_2(\beta_n)(u,x,d)$ and prove that after we take the quotient it becomes invariant under conjugation. We notice that the writhe and number of strands remain unchanged under conjugation, so the framing part from $\Lambda_2(\beta_n)(u,x,d)$ (which is given by the power of $u$) is invariant under conjugation. We will show that if we impose the condition $(d-1)(xd-1)=0$ then $\Lambda_2(\beta_n)(u,x,d)$ is invariant under conjugation.

Our strategy is to prove that this state sum specialised by the above condition can be seen as a sum of traces of braid group representations, which in turn are conjugacy invariants. 

We start by introducing the following subspace in the Lawrence representation.
\begin{defn}(Subspace in the Lawrence representation) Let us consider the set of partitions from $E_{n,m}$ with multiplicities at most one:
\begin{equation}
E^{1}_{n,m}=\{\bar{j}=(j_1,...,j_{n}) \mid j_1,...,j_{n} \in \Z, j_1+...+j_{n}=m, 0 \leq j_1,...,j_{n} \leq 1\}.
\end{equation}
Then, we consider the subspace $H^{1}_{n,m}\subseteq H_{n,m}$ generated by classes which are prescribed by such partitions, as below:
\begin{equation}
H^{1}_{n,m}=\langle \mathscr U'_{j_1,...,j_{n}} \mid \bar{j}=(j_1,...,j_{n}) \in E^{1}_{n,m}\rangle_{\Z[x^{\pm1},d^{\pm1}]}. 
\end{equation}
\end{defn}
As we will see in the next part, this subspace is not preserved by the braid group action given by the Lawrence representation on $H_{n,m}$. However, if we impose the extra relation, then the subspace will be preserved under the $B_n$-action and we will have a well defined sub-representation.
\begin{lem}[Sub-representation of the Lawrence representation] We consider the quotient morphism
$s:\Z[x^{\pm1},d^{\pm1}]\rightarrow\Z[x^{\pm1},d^{\pm1}]/((d-1)(xd-1))$ from \eqref{NN}.
Then there is a well defined induced representation of the braid group on the following subspace:
\begin{equation}
B_n\curvearrowright H^{1}_{n,m}|_{s}
\end{equation}
(here, we use Notation \ref{N:spec}).
\end{lem}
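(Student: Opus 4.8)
The lemma claims that after specializing coefficients via $s$ (imposing $(d-1)(xd-1)=0$), the subspace $H^1_{n,m}|_s$ generated by the multiplicity-$\leq 1$ classes $\mathscr U'_{j_1,\dots,j_n}$ with $\bar j \in E^1_{n,m}$ becomes invariant under the braid group action. Before seeing the author's proof, let me think about how I'd approach this.

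**What structure do I have?** The Lawrence representation $L_{n,m}: B_n \to \Aut(H_{n,m})$ acts on a module with basis $\{\mathscr U'_{j_1,\dots,j_n}\}$ indexed by all partitions in $E_{n,m}$. The subspace $H^1_{n,m}$ is spanned by the sub-basis indexed by $E^1_{n,m}$ (entries in $\{0,1\}$). The key point is that this subspace is generally NOT preserved — a braid generator $\sigma_i$ can raise the multiplicity at a puncture above $1$, producing classes $\mathscr U'_{\dots,2,\dots}$ that lie outside $H^1_{n,m}$. The claim is that such "overflow" terms all carry coefficients divisible by $(d-1)(xd-1)$ (or reduce to lower-multiplicity classes modulo this relation), so they vanish after applying $s$.

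**How the pieces combine.** Let me think about what I'd write. The braid group $B_n$ is generated by the standard generators $\sigma_1,\dots,\sigma_{n-1}$, so it suffices to check that each $\sigma_i$ (and $\sigma_i^{-1}$) maps $H^1_{n,m}|_s$ into itself. Because the Lawrence action is local — $\sigma_i$ only affects the configuration near punctures $i$ and $i+1$ — the action on a basis element $\mathscr U'_{\bar j}$ depends only on $(j_i, j_{i+1})$, with the other coordinates left as spectators. So the whole problem reduces to understanding the action of a single crossing $\sigma$ on the two-puncture picture, i.e. computing $\sigma \cdot \mathscr U'_{a,b}$ in $H_{2,a+b}$ for $(a,b) \in \{(0,0),(0,1),(1,0),(1,1)\}$. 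For $a+b \leq 1$ nothing can overflow. The critical case is $(a,b)=(1,1)$, where $\sigma$ can produce a term $\mathscr U'_{2,0}$ or $\mathscr U'_{0,2}$ (multiplicity $2$ at one puncture), which leaves $H^1$.

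**The plan.** First I would reduce to generators and invoke locality of the Lawrence action, so that only the two-strand computation matters. Second, I would write out the explicit expansion of $\sigma \cdot \mathscr U'_{1,1}$ in the basis $\{\mathscr U'_{2,0}, \mathscr U'_{1,1}, \mathscr U'_{0,2}\}$ of $H_{2,2}$ — this is a finite, known computation from the Lawrence/Burau-type formulas (one can read the coefficients off the geometric supports and the local system $\phi'$, exactly as the intersection coefficients are computed in Section~\ref{S:4}). Third, I would observe that the coefficient of each overflow class $\mathscr U'_{2,0}$, $\mathscr U'_{0,2}$ is a multiple of $(d-1)(xd-1)$, so it maps to $0$ under $s$; equivalently, after applying $s$, $\sigma\cdot \mathscr U'_{1,1}$ lands back in $\langle \mathscr U'_{1,1}\rangle$ (plus possibly $\mathscr U'_{1,1}$ itself with some coefficient in $\Z[x^{\pm1},d^{\pm1}]/((d-1)(xd-1))$). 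Then the same must be checked for $\sigma^{-1}$. I would phrase this as: the span of the higher-multiplicity classes is, modulo $((d-1)(xd-1))$, contained in the span of the $E^1$-classes for the relevant local two-puncture block.

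**Main obstacle.** The crux is the explicit two-strand computation and verifying that the overflow coefficients are exactly divisible by $(d-1)(xd-1)$. This is where the specific quadratic relation gets forced — it is the same relation that appeared in the Markov~II computation of Section~\ref{S:3}–\ref{S:4}, which is reassuring and strongly suggests the divisibility holds. The risk is bookkeeping: getting the Lawrence action formulas right (signs, powers of $x$ and $d'=-d$), correctly handling both $\sigma$ and $\sigma^{-1}$, and confirming that the spectator coordinates genuinely do not interfere. I would rely on Proposition~\ref{gen} and the pairing formulas from \cite{Cr1}, \cite{CrM} for the precise action, rather than rederiving it. I expect the proof to conclude by stating that, since each generator preserves $H^1_{n,m}|_s$ and these generate $B_n$, the full braid group preserves it, giving the desired sub-representation $B_n \curvearrowright H^1_{n,m}|_s$.
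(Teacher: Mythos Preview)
Your proposal is correct and follows essentially the same approach as the paper: reduce to generators, use locality of the Lawrence action to pass to the two-puncture picture, and then explicitly compute the overflow coefficients in the decomposition of $\sigma^{\pm 1}\mathscr U'_{1,1}$ to see that they are killed by the quadratic relation. The paper carries out exactly this computation by pairing against suitably chosen dual ``barcode'' classes, finding (for $\sigma$) that the $\mathscr U'_{2,0}$-coefficient is already zero while the $\mathscr U'_{0,2}$-coefficient is $(1+d)(1-x^{-1}d^{-1})$, and symmetrically for $\sigma^{-1}$; so your expectation that both overflow terms carry the factor is slightly stronger than what actually happens, but harmlessly so.
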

\begin{proof}
We have to prove that for all $\bar{j}=(j_1,...,j_n)\in E^{1}_{n,m}$ and any $\beta_n\in B_n$ we have:
$$\beta_n \mathscr U'_{j_1,...,j_{n}} \in H^{1}_{n,m}|_{s}.$$
It is enough to show this for the generators of the braid group and moreover, since the action of such generator is local and acts non-trivially just on a disc around two punctures, it is enough to check this in that punctured disc with two punctures. 
Let $j_0,j_1\in \{0,1\}$ and denote $j_0+j_1=m$. Then we will prove that: $$\sigma \mathscr U'_{j_0,j_{1}} \in H^{1}_{2,m}|_{s}.$$ 
If $j_1=0$, looking directly on the picture we see that we obtain another basis element associated to a partition without multiplicities. The only check that needs to be done is for $j_1=1$.
The only case when we could get a multiplicity at least $2$ is if $j_0=1$. Using the structure of $H_{2,m}$, we know that in this homology group we have a decomposition:
\begin{equation}\label{eq:40}
\sigma \mathscr U'_{1,1}=\alpha_1 \mathscr  U'_{1,1}+ \alpha_2 \mathscr U'_{0,2} + \alpha_3 \mathscr U'_{2,0}. 
\end{equation}
\begin{figure}[H]
\centering
\includegraphics[scale=0.3]{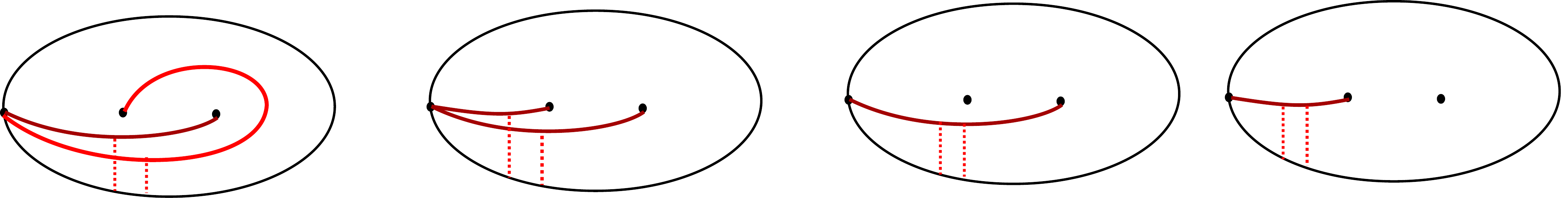}
\vspace{-3mm}
$$\sigma \mathscr U'_{1,1} \ \ \hspace{24mm} \ \mathscr  U'_{1,1} \ \ \hspace{24mm} \ \mathscr U'_{0,2} \ \ \hspace{24mm} \ \mathscr U'_{2,0}
$$
\caption{Coefficients}
\end{figure}
Now, intersecting with a dual class whose support has two semi-circles which start from the upper boundary of the disc and go around the first puncture, we see that all intersections vanish except the one with the class $\mathscr U'_{2,0}$-which is $1$. This shows that the coefficient $\alpha_3=0$. 

In the next part we compute the coefficient $\alpha_2$. We do this by intersecting with the dual class $\mathscr V$ given by the following green barcode:
\begin{figure}[H]
\centering
\includegraphics[scale=0.3]{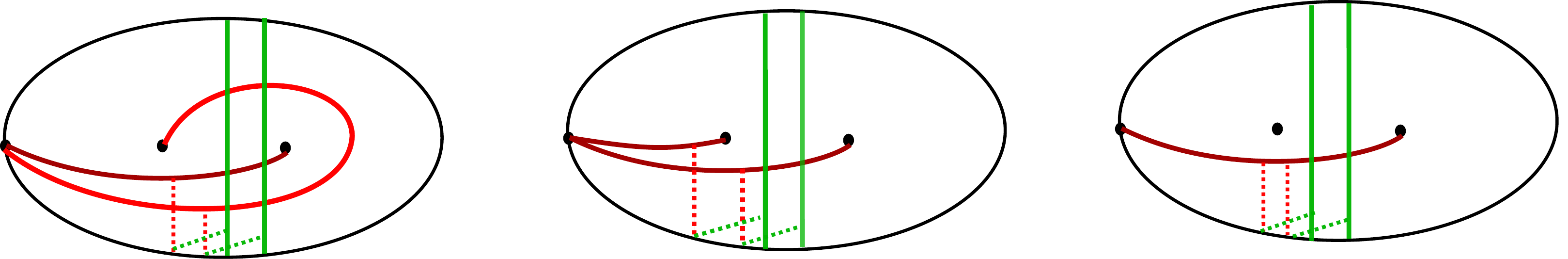}
\vspace{-3mm}
$$ \ \ \hspace{54mm} \ 0 \ \ \hspace{29mm} \ 1 \ \ \hspace{24mm} 
$$
\caption{Computing $\alpha_2$}\label{A}
\end{figure}
Following relation \eqref{eq:40}, we have:
\begin{equation}
\lll \sigma \mathscr U'_{1,1},\mathscr V \ggg =  \alpha_1 \lll \mathscr  U'_{1,1},\mathscr V \ggg+ \alpha_2 \lll \mathscr U'_{0,2},\mathscr V \ggg. 
\end{equation}
From the pictures from Figure \ref{A}, we see that:
\begin{equation}
\begin{aligned}
\lll \mathscr  U'_{1,1},\mathscr V \ggg=0 \\
\lll \mathscr  U'_{0,2},\mathscr V \ggg=1.
\end{aligned}
\end{equation}
Now we compute the intersection $\lll \sigma \mathscr U'_{1,1},\mathscr V \ggg$. 
\begin{figure}[H]
\centering
\includegraphics[scale=0.7]{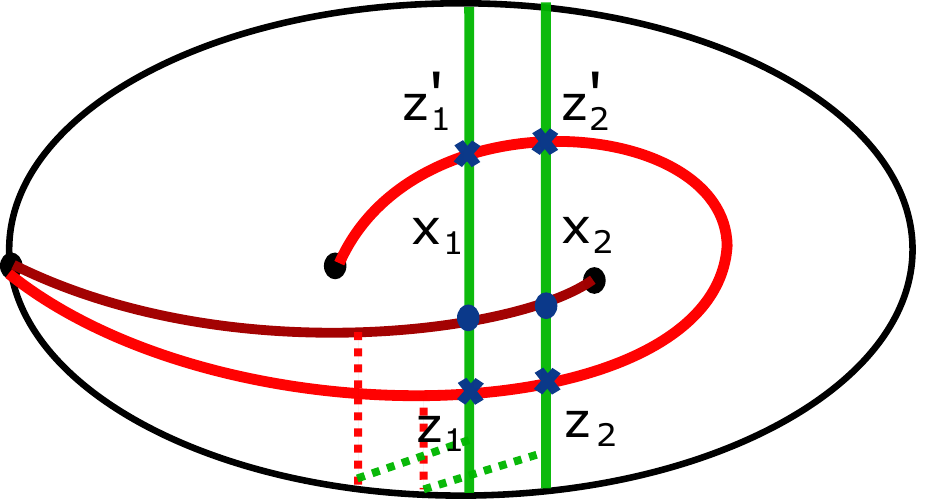}
\caption{Coefficients $\sigma$}\label{AA}
\end{figure}
For this, we use Figure \ref{AA}, where we see that we have 4 intersection points, which carry the following gradings:
\begin{center}
\label{tabel}
{\renewcommand{\arraystretch}{1.7}\begin{tabular}{ | c | c | c | c | }
 \hline
$(x_1,z_2)$ & $(x_1,z'_2)$ & $(x_2,z_1)$ & $(x_2,z'_1)$  \\ 
\hline  
$1$ & $-x^{-1}$ & $d$ & $-x^{-1}d^{-1}$ \\ 
\hline
\hline                                    
\end{tabular}}
\end{center}
It follows that the intersection is:
\begin{equation}
\lll \sigma \mathscr U'_{1,1},\mathscr V \ggg=1+d-(x^{-1}+x^{-1}d^{-1})=(1+d)(1-x^{-1}d^{-1}).
\end{equation}
From this, we obtain the coefficient $\alpha_2$:
\begin{equation}
\alpha_2=(1+d)(1-x^{-1}d^{-1}).
\end{equation}

In order to compute the coefficient $\alpha_1$, we intersect with the barcode  $\mathscr W$ from the picture below:
\begin{figure}[H]
\centering
\includegraphics[scale=0.7]{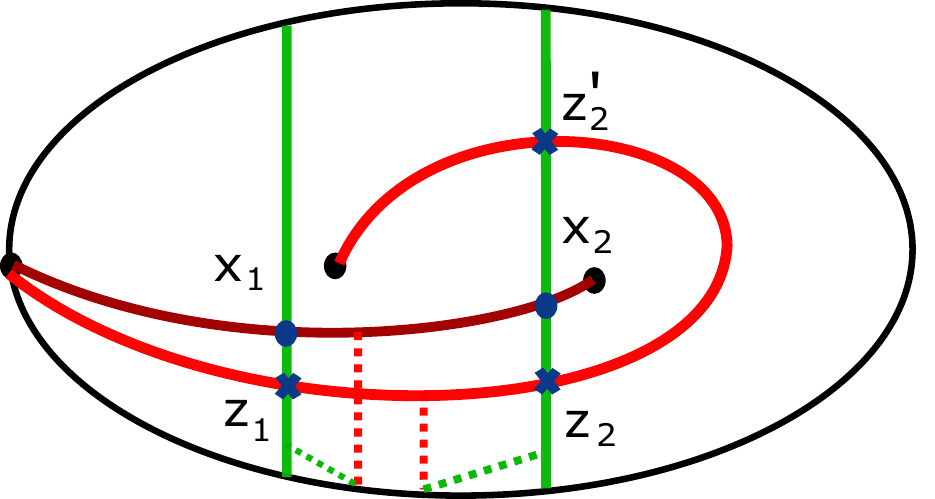}
\caption{Computing $\alpha_1$}
\end{figure}
We remark that it has the following intersections with the basis elements:
\begin{equation}
\begin{aligned}
\lll \mathscr  U'_{1,1},\mathscr W \ggg=1 \\
\lll \mathscr  U'_{0,2},\mathscr W \ggg=0.
\end{aligned}
\end{equation}
For the intersection $\lll \sigma \mathscr U'_{1,1},\mathscr W \ggg$ we have 3 intersection points which carry the gradings from below:
\begin{center}
{\renewcommand{\arraystretch}{1.7}\begin{tabular}{ | c | c | c | c | }
 \hline
$(x_1,z_2)$ & $(x_1,z'_2)$ & $(x_2,z_1)$   \\ 
\hline  
$1$ & $-x^{-1}$ & $d$  \\ 
\hline
\hline                                    
\end{tabular}}
\end{center}

It follows that $\lll \sigma \mathscr U'_{1,1},\mathscr W \ggg=1-x^{-1}+d$, and using the decomposition \eqref{eq:40} we conclude that:
\begin{equation}
\alpha_1=1-x^{-1}+d.
\end{equation}
So, we have the decomposition:
\begin{equation}
\sigma \mathscr U'_{1,1}=(1-x^{-1}+d) \mathscr  U'_{1,1}+ (1+d)(1-x^{-1}d^{-1}) \mathscr U'_{0,2}. 
\end{equation}
It follows that if we impose the relation $(1+d)(1-x^{-1}d^{-1})$, the coefficient $\alpha_2$ vanishes. This shows that $\sigma \mathscr U'_{1,1} \in H^1_{2,2}|_s$, so we remain in the homological subspace given by multiplicity free partitions. 

In the next part we do the same procedure for the action of the elementary braid $\sigma^{-1}$. Similar to the previous case, the only situation that we need to check is given by the action on the class $\mathscr U'_{1,1}$ and we have a decomposition as below:
\begin{equation}\label{eq:41}
\sigma^{-1} \mathscr U'_{1,1}=\alpha'_1 \mathscr  U'_{1,1}+ \alpha'_2 \mathscr U'_{0,2} + \alpha'_3 \mathscr U'_{2,0}. 
\end{equation}
It is clear that we have the coefficient $\alpha'_2=0$, because we cannot get a geometric support with multiplicity two that ends in the second puncture. So, we have the following classes that appear in the decomposition:
\begin{figure}[H]
\centering
\includegraphics[scale=0.3]{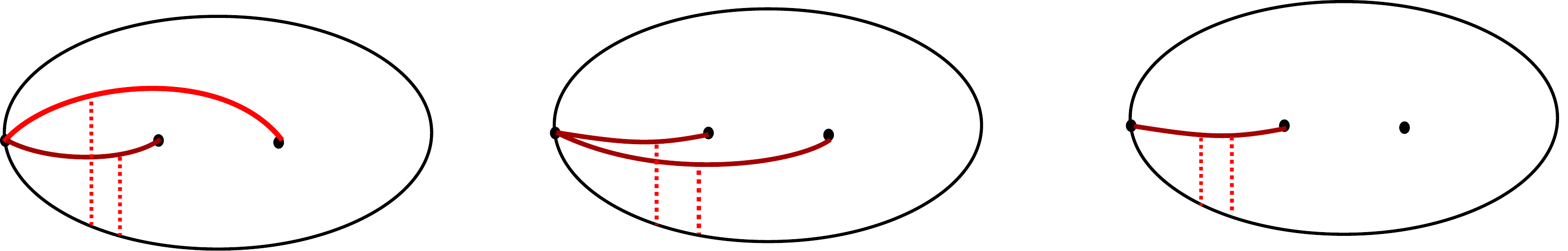}
\vspace{-3mm}
$$\sigma^{-1} \mathscr U'_{1,1} \ \ \hspace{24mm} \ \mathscr  U'_{1,1} \ \ \hspace{24mm} \ \mathscr U'_{2,0}
 \ \ \ \  $$
\caption{Coefficients $\sigma^{-1}$}
\end{figure}
First of all, we want to compute $\alpha_1'$. In order to do this, we intersect with the barcode $\mathscr W$:
\begin{figure}[H]
\centering
\includegraphics[scale=0.3]{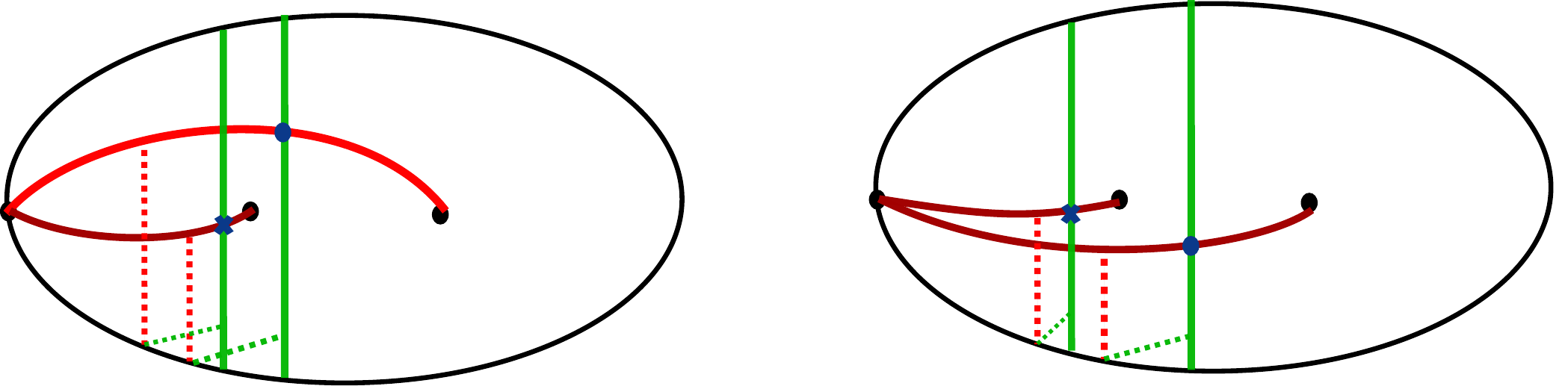}
\vspace{-3mm}
$$ \ \ \hspace{18mm} \ xd \ \ \hspace{29mm} \ 1 \ \ \hspace{24mm} 
$$
\caption{Computing $\alpha'_1$}
\end{figure}
Computing the coefficients from the two intersections, we obtain:
\begin{equation}
\begin{aligned}
& \lll \sigma^{-1} \mathscr U'_{1,1},\mathscr W \ggg=xd \\
& \lll \mathscr  U'_{1,1},\mathscr W \ggg=1 \\
& \lll \mathscr  U'_{2,0},\mathscr W \ggg=0.
\end{aligned}
\end{equation}
From relation \eqref{eq:41} we get:
\begin{equation}
\alpha'_1=xd.
\end{equation}
Further on, we want to find $\alpha_3'$. We intersect with the following barcode $\mathscr Z$:
\begin{figure}[H]
\centering
\includegraphics[scale=0.3]{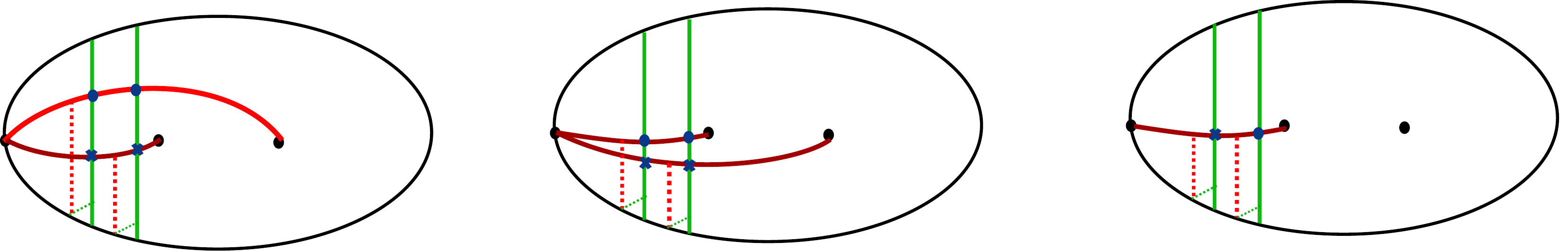}
\vspace{-3mm}
$$\hspace{17mm} 1+d \ \ \hspace{24mm} \ 1+d \ \ \hspace{27mm} \ 1 \ \ \hspace{20mm} 
$$
\caption{Computing $\alpha'_3$}
\end{figure}
We have the following intersections:
\begin{equation}
\begin{aligned}
& \lll \sigma^{-1} \mathscr U'_{1,1},\mathscr Z \ggg=xd \\
& \lll \mathscr  U'_{1,1},\mathscr Z \ggg=1+d \\
& \lll \mathscr  U'_{2,0},\mathscr Z \ggg=1.
\end{aligned}
\end{equation}

This together with the decomposition \eqref{eq:41} show the following relation:
\begin{equation*}
1+d=(1+d)\alpha'_1+\alpha'_3.
\end{equation*}
Using that $\alpha'_1=xd$ we conclude:
\begin{equation}
\alpha'_3=1+d-(1+d)xd=(1+d)(1-xd).
\end{equation}
We obtain the following decomposition:
\begin{equation}
\sigma^{-1} \mathscr U'_{1,1}= xd \ \mathscr  U'_{1,1}+ (1+d)(1-xd) \ \mathscr U'_{2,0}. 
\end{equation}

 This shows that if we pass to the quotient and impose the quadratic relation, then we remain in the subspace: $$\sigma^{-1} \mathscr U'_{1,1} \in H^1_{2,2}|_s.$$ This concludes the proof that the subspace $H_{n,m}^1$ remains invariant under the braid group action once we specialise the coefficients via the function $s$.
 \end{proof}
 \begin{notation}[Lawrence sub-representation]
We denote this well defined sub-representation by:
\begin{equation}
L^1_{n,m}: B_n\rightarrow \Aut\left(H^{1}_{n,m}|_{s}\right).
\end{equation}
\end{notation}

Now we are ready to show that the $s$-specialised intersection form is invariant under conjugation. We remind the formula from Corollary \ref{CFstate}:
\begin{equation*}
\begin{aligned}
\bar{\Omega}(\beta_n)(x,d)=(xd^2)^{\frac{w(\beta_n)+n}{2}} \sum_{i_1,...,i_{n}=0}^{1} d^{-\sum_{k=1}^{n}i_k} \lll (\beta_{n} \cup {\mathbb I}_{n} ){ \mathscr F'_{\bar{i}}}, {\mathscr L'_{\bar{i}}}\ggg\mid_{s}\ \in \LL.
\end{aligned}
\end{equation*}
In the next part, we will show that the above state sum can be interpreted using the Lawrence sub-representations.
\begin{prop} The state sum of intersections is a sum of traces of Lawrence sub-representations, as below:
\begin{equation}\label{traceformula}
\begin{aligned}
\sum_{i_1,...,i_{n}=0}^{1} d^{-\sum_{k=1}^{n}i_k}
\lll (\beta_{n} \cup {\mathbb I}_{n} ){ \mathscr F'_{\bar{i}}}, {\mathscr L'_{\bar{i}}}\ggg|_{s} ~=\sum_{m=0}^{n} d^{-m} \  tr(L^1_{n,m}(\beta_n)).
\end{aligned}
\end{equation}
\end{prop}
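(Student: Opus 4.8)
The plan is to collapse each pairing in the state sum to a single coefficient using the orthogonality relation \eqref{prop}, to regroup the resulting sum according to the total weight, and then to recognise the surviving coefficients as the diagonal entries of the Lawrence sub-representation $L^1_{n,m}$.

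First I would fix a multi-index $\bar{i}=(i_1,\dots,i_n)$ with $i_k\in\{0,1\}$ and put $m=w(\bar{i})=\sum_k i_k$. Expanding $(\beta_n\cup\mathbb I_n)\mathscr F'_{\bar{i}}$ in the basis of $H_{2n,n}$ as in \eqref{coeff2} and pairing with the dual class $\mathscr L'_{\bar{i}}$, the orthogonality property \eqref{prop} annihilates every term except the one indexed by $\bar{j}=\bar{i}$, so that
\begin{equation*}
\lll (\beta_n\cup\mathbb I_n)\mathscr F'_{\bar{i}},\,\mathscr L'_{\bar{i}}\ggg=\alpha_{\bar{i}},
\end{equation*}
where $\alpha_{\bar{i}}$ is the coefficient of $\mathscr U'_{\bar{i},1-\bar{i}}$ in \eqref{coeff2}. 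Since every $i_k$ lies in $\{0,1\}$, collecting the terms of the left-hand side according to the value $m$ of $\sum_k i_k$ makes the indexing set of each group equal to $E^1_{n,m}$, and the left-hand side becomes $\sum_{m=0}^n d^{-m}\sum_{\bar{i}\in E^1_{n,m}}\alpha_{\bar{i}}|_{s}$.

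The crucial step is to identify $\alpha_{\bar{i}}$ with a diagonal matrix entry of the Lawrence action. Here I would argue geometrically: the support of $\mathscr F'_{\bar{i}}\subseteq C_{2n,n}$ is arranged so that, on the first $n$ punctures, it carries the $m$ particles distributed exactly as the Lawrence generator $\mathscr U'_{\bar{i}}\in H_{n,m}$, while a configuration of type $1-\bar{i}$ is frozen on the last $n$ punctures (Figure \ref{Statesum}). Because $\beta_n\cup\mathbb I_n$ is supported away from the last $n$ punctures, the braid action pins those coordinates at $1-\bar{i}$ — which is why only classes $\mathscr U'_{\bar{j},1-\bar{i}}$ appear in \eqref{coeff2} — and on the first $n$ punctures it reproduces the Lawrence action $L_{n,m}(\beta_n)$ applied to $\mathscr U'_{\bar{i}}$. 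Matching the coefficients of $\mathscr U'_{\bar{i},1-\bar{i}}$ then yields $\alpha_{\bar{i}}=\big(L_{n,m}(\beta_n)\big)_{\bar{i},\bar{i}}$, the diagonal entry in the basis $\{\mathscr U'_{\bar{j}}\}_{\bar{j}\in E_{n,m}}$ of Proposition \ref{gen}. I expect this factorisation to be the main obstacle, since it requires checking carefully that the geometric supports and the chosen paths to the base point in the first $n$ punctures match those defining $\mathscr U'_{\bar{i}}$, so that no spurious monodromy is introduced by the doubling.

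Finally I would specialise via $s$ and invoke the preceding lemma: the subspace $H^1_{n,m}|_s$ is $B_n$-invariant, so the matrix of $L^1_{n,m}(\beta_n)$ in the basis $\{\mathscr U'_{\bar{i}}\}_{\bar{i}\in E^1_{n,m}}$ is the principal submatrix of $L_{n,m}(\beta_n)|_s$ indexed by the multiplicity-free partitions, and in particular its diagonal entries coincide with $\alpha_{\bar{i}}|_s$. Summing over $\bar{i}\in E^1_{n,m}$ gives $\sum_{\bar{i}\in E^1_{n,m}}\alpha_{\bar{i}}|_s=\mathrm{tr}\big(L^1_{n,m}(\beta_n)\big)$ for each $m$, and substituting into the grouped expression above produces the asserted identity.
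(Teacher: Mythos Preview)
Your proof is correct and follows essentially the same route as the paper: group the state sum by weight $m$, use the orthogonality \eqref{prop} to reduce each pairing to the single coefficient $\alpha_{\bar i}$, identify $\alpha_{\bar i}$ with the diagonal entry of $L_{n,m}(\beta_n)$ at $\mathscr U'_{\bar i}$, and then specialise via $s$ so that the sub-representation $L^1_{n,m}$ is well defined and its trace is exactly $\sum_{\bar i\in E^1_{n,m}}\alpha_{\bar i}|_s$. One small slip: the expansion \eqref{coeff2} lives in $H_{2n,m}$ (with $m=w(\bar i)$), not $H_{2n,n}$.
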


\begin{proof}
The state sum from the left hand side can be expressed as:
\begin{equation}\label{relstate}
\begin{aligned}
&\sum_{i_1,...,i_{n}=0}^{1} d^{-\sum_{k=1}^{n}i_k}
 \lll (\beta_{n} \cup {\mathbb I}_{n} ){ \mathscr F'_{\bar{i}}}, {\mathscr L'_{\bar{i}}}\ggg=\\
&=\sum_{m=0}^{n} d^{-m} \sum_{\bar{i}=(i_1,...,i_{n})\in E_{n,m}^1}
 \lll (\beta_{n} \cup {\mathbb I}_{n} ){ \mathscr F'_{\bar{i}}}, {\mathscr L'_{\bar{i}}}\ggg.
\end{aligned}
\end{equation}
Now, using the structure of the homology group $H_{2n,m}$ from proposition \ref{gen}, for $\bar{i}\in E_{n,m}^1$ there exists a collection of coefficients $\alpha_{\bar{j}} \in \Z[x^{\pm1},d^{\pm1}]$ such that:
\begin{equation}\label{coeff1}
 L_{2n,m}(\beta_{n} \cup {\mathbb I}_{n} ){ \mathscr F'_{\bar{i}}}=\sum_{\substack{\bar{j}=(j_1,...,j_{n})\in E_{n,m}}}\alpha_{\bar{j}} \cdot \mathscr U'_{\bar{j},1-\bar{i}}
\end{equation}
(following notation \ref{ind}). 

This comes from the fact that on the last components we act with ${\mathbb I}_{n}$, so we do not change the associated indices of $\mathscr F'_{\bar{i}}$ through this action and so they remain $1-\bar{i}$. On the other hand, we will obtain a linear combination of classes associated to partitions whose first components are $\bar{j}$ for arbitrary $\bar{j}=(j_1,...,j_n)$. Since the Lawrence representation preserve the total sum of indices, it follows that we will get classes associated to $\bar{j}$ such that $$w(\bar{j})=w(\bar{i})=m.$$This explains relation \eqref{coeff1}.
For the intersection with the dual class, we remind relation \eqref{prop}: 
\begin{equation}
\hspace{-3mm}\lll  \mathscr U'_{\bar{j},1-\bar{i}}, {\mathscr L'_{\bar{i}}}\ggg=\begin{cases}
1, \text{ if } (j_1,...,j_{n})=(i_1,...,i_{n})\\
0, \text{ otherwise}.
\end{cases}
\end{equation}
 So, the pairing with the dual class $\mathscr L'_{\bar{i}}$ encodes precisely the diagonal coefficient and for any $\bar{i}\in E_{n,m}^{1}$ we have:
\begin{equation}\label{diagonalelement}
\begin{aligned}
\lll (\beta_{n} \cup {\mathbb I}_{n} ){ \mathscr F'_{\bar{i}}}, {\mathscr L'_{\bar{i}}}\ggg ~=\sum_{\substack{\bar{j}= (j_1,...,j_{n})\in E_{n,m}}}\alpha_{\bar{j}} \lll \mathscr U'_{\bar{j},1-\bar{i}}, \mathscr L'_{\bar{i}}\ggg~=\alpha_{\bar{i}} .
\end{aligned}
\end{equation}
On the other hand, we remark that these $\alpha$-coefficients are the same as the ones that give the decomposition of the $\beta_n$-action on the basis element $\mathscr U'_{\bar{i}}$ from the homology group $H_{n,m}$:
\begin{equation}
L_{n,m}(\beta_{n}){ \mathscr U'_{\bar{i}}}=\sum_{\substack{\bar{j}=(j_1,...,j_{n})\in E_{n,m}}}\alpha_{\bar{j}} \cdot \mathscr U'_{\bar{j}}.
\end{equation}
We notice that the above sum is indexed by elements from $E_{n,m}$, not necessarily from $E^{1}_{n,m}$.

From the relation \eqref{diagonalelement} we see that for any index $\bar{i}\in E_{n,m}^{1}$ the pairing $$\lll (\beta_{n} \cup {\mathbb I}_{n} ){ \mathscr F'_{\bar{i}}}, {\mathscr L'_{\bar{i}}}\ggg$$ encodes precisely the coefficient of the homology class $\mathscr U'_{\bar{i}}$ that appear in the decomposition of $L_{n,m}(\beta_{n}){ \mathscr U'_{\bar{i}}}$. 

Now we specialise through $s$ and use the property that we have a well defined action $L^1_{n,m}$ on the subspace $H^1_{n,m}$ (which is spanned by all $\mathscr U'_{\bar{j}}$ for $\bar{j}\in E_{n,m}^{1}$). From these remarks we obtain that:
\begin{equation}
\begin{aligned}
\sum_{\bar{i}=(i_1,...,i_{n})\in E_{n,m}^1}
 \lll (\beta_{n} \cup {\mathbb I}_{n} ){ \mathscr F'_{\bar{i}}}, {\mathscr L'_{\bar{i}}}\ggg|_{s}&=\sum_{\bar{i}=(i_1,...,i_{n})\in E_{n,m}^1} \alpha_{\bar{i}} \ =\\
 &= tr \left(L^1_{n,m}(\beta_n)\right).
\end{aligned}
\end{equation}
This together with relation \eqref{relstate} conclude the trace formula interpretation for the state sum, as presented in the statement.
\end{proof}
From this, we obtain a trace formula for our intersection form.
\begin{coro}The specialised intersection form $\bar{\Omega}(\beta_n)$ is given by the following sum of traces of Lawrence sub-representations:
\begin{equation}
\begin{aligned}
\bar{\Omega}(\beta_n)(x,d)&:=(xd^2)^{\frac{w(\beta_n)+n}{2}} \sum_{m=0}^{n} d^{-m} \  tr(L^1_{n,m}(\beta_n)).
\end{aligned}
\end{equation}

\end{coro}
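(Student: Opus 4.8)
The plan is to combine the two results established immediately above: the state-sum presentation of $\bar{\Omega}(\beta_n)$ recorded in Corollary \ref{CFstate}, and the trace identity \eqref{traceformula} proved in the preceding Proposition. Concretely, Corollary \ref{CFstate} gives
$$
\bar{\Omega}(\beta_n)(x,d)=(xd^2)^{\frac{w(\beta_n)+n}{2}} \sum_{i_1,\dots,i_{n}=0}^{1} d^{-\sum_{k=1}^{n}i_k}\, \lll (\beta_{n} \cup {\mathbb I}_{n} ){ \mathscr F'_{\bar{i}}}, {\mathscr L'_{\bar{i}}}\ggg\big|_{s},
$$
and the framing prefactor $(xd^2)^{\frac{w(\beta_n)+n}{2}}$ is a unit in $\LL$ that is untouched by the specialisation $s$. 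So I would pull it out of the sum and concentrate entirely on the $s$-specialised state sum that follows it.

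Second, I would substitute the trace formula \eqref{traceformula}, namely
$$
\sum_{i_1,\dots,i_{n}=0}^{1} d^{-\sum_{k=1}^{n}i_k}\, \lll (\beta_{n} \cup {\mathbb I}_{n} ){ \mathscr F'_{\bar{i}}}, {\mathscr L'_{\bar{i}}}\ggg\big|_{s}=\sum_{m=0}^{n} d^{-m}\, tr(L^1_{n,m}(\beta_n)),
$$
directly into the displayed expression. Re-multiplying by the prefactor then yields exactly the claimed formula, so the argument closes in a single line.

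Accordingly, there is essentially no obstacle left at this stage: all of the genuine content sits in the earlier Proposition and the Lemma preceding it, where one must (i) verify that the subspace $H^1_{n,m}$ spanned by multiplicity-free partitions is preserved by the $B_n$-action after the specialisation $s$ --- this is precisely the role of the quadratic relation $(d-1)(xd-1)$ together with the explicit $\sigma^{\pm1}$-computations --- and (ii) check, via relation \eqref{prop}, that pairing $(\beta_n\cup\mathbb I_n)\mathscr F'_{\bar{i}}$ with the dual class $\mathscr L'_{\bar{i}}$ extracts precisely the diagonal coefficient $\alpha_{\bar{i}}$ of $L_{n,m}(\beta_n)$ acting on $\mathscr U'_{\bar{i}}$, so that summing over $\bar{i}\in E^1_{n,m}$ reconstructs the trace of $L^1_{n,m}(\beta_n)$. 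Once those facts are in hand the corollary is a purely formal consequence, and I would present it as such rather than repeating the homological computations.
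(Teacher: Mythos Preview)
Your proposal is correct and matches the paper's approach exactly: the corollary is presented in the paper as an immediate consequence (``From this, we obtain a trace formula for our intersection form''), obtained by inserting the trace identity \eqref{traceformula} into the state-sum expression of Corollary~\ref{CFstate}. Your additional remarks on where the actual work lies (the Lemma and Proposition preceding the corollary) are accurate and helpful context, though not required for the proof itself.
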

Using the property that the trace is invariant under conjugation, we conclude that when we impose relation $(1+d)(xd-1)$ the intersection form $\bar{\Omega}$ is invariant at conjugation, so the first Markov move is satisfied. 

The invariance of $\bar{\Omega}$ with respect to the two Markov moves shows that it is a well-defined link invariant with values in $\LL$, and concludes Theorem \ref{THEOREM}.
\subsection{Markov I move for specialisations of the open model $\Omega'(\beta_n)$} 
This subsection concerns the invariance of two specialisations of the form $\bar{\Omega}'(x,d)$ with respect to braid conjugation.  All the proofs so far were topological. This is the only place where we use one result from to our previous work. More precisely, in \cite{Cr} we have proved that the open intersection model recovers the Jones and Alexander polynomials of the closure of the braid, through the following specialisations of coefficients:
\begin{equation}\label{eq:11}
\begin{aligned}
&\Omega'(\beta_n)(x,d)|_{x=d^{-1}}=\tilde{J}(\hat{\beta}_n,x)\\
&\Omega'(\beta_n)|_{d=-1}=\Delta(\hat{\beta}_n,x).
\end{aligned}
\end{equation}
We will re-prove this in the next section by checking the skein relation. However, for that we will use the property that these two specialisations are invariant under braid conjugation, namely: 
\begin{equation}\label{eq:conj}
\begin{aligned}
&\Omega'(\beta_n)(x,d)|_{x=d^{-1}}=\Omega'(\gamma \circ \beta_n \circ \gamma^{-1})(x,d)|_{x=d^{-1}}\\
&\Omega'(\beta_n)(x,d)|_{x={-1}}=\Omega'(\gamma \circ \beta_n \circ \gamma^{-1})(x,d)|_{x={-1}}.
\end{aligned}
\end{equation}
for any braid $\gamma \in B_n$.
\section{Identification of the specialisations of the intersection form with Jones and Alexander invariants, via skein relations}\label{S:7}

In this part we aim to prove that the specialisations of the intersection forms $\bar{\Omega}'$ and $\bar{\Omega}$ recover the Jones and Alexander polynomials, as presented in Theorem \ref{Tsk} and Theorem \ref{Tsk'}. We do this by checking that they satisfy the skein relations that characterise these two polynomials. 

Let $L$ be the closure of $\beta_n$ and suppose that we want to investigate a crossing change. We denote by $L_+$ and $L_-$ the two links obtained from $L$ by performing a positive or negative crossing change respectively. We have proved that the intersection form $\bar{\Omega}$ is a link invariant, in particular it is invariant under conjugation. Also, from \eqref{eq:conj} we know that $\Omega'$ is also conjugation invariant. So, for both intersection models we can suppose that the crossing change is performed at the top of the braid. Let $i$ and $i+1$ be the two adjacent strands that form this crossing. 

Let us denote the two braids obtained from $\beta_n$ by adding a positive/ negative crossing as:
\begin{equation} 
\begin{aligned}
&\beta_n^{+}=({\mathbb I}_{i-1} \cup \sigma_i \cup {\mathbb I}_{n-i-1}) \circ \beta_n;\\ &\beta_n^{-}=({\mathbb I}_{i-1} \cup \sigma_i^{-1} \cup {\mathbb I}_{n-i-1}) \circ \beta_n.
\end{aligned}
\end{equation}

We will use an argument which is similar to the one presented in section \ref{S:4}, which permits us to check the skein computation just for two strands.

\subsection{Open intersection model}
We consider the graded intersections which are associated to the crossing change and following Corollary \ref{CFstate'} we have:
\begin{equation}\label{sk}
\begin{aligned}
\bar{\Omega}'(\beta_n)(x,d)&=(xd^2)^{\frac{w(\beta_n)+(n-1)}{2}} \sum_{i_1,...,i_{n-1}=0}^{1} d^{-\sum_{k=1}^{n-1}i_k} \lll (\beta_{n} \cup {\mathbb I}_{n-1} ){ \mathscr F'_{\bar{i}}}, {\mathscr L'_{\bar{i}}}\ggg\mid_{s}\\
\bar{\Omega}'(\beta^{+}_n)(x,d)&=(x^{\frac{1}{2}}d)(xd^2)^{\frac{w(\beta_n)+(n-1)}{2}} \sum_{i_1,...,i_{n-1}=0}^{1} d^{-\sum_{k=1}^{n-1}i_k}\\
& \hspace{20mm} \lll (({\mathbb I}_{i-1} \cup \sigma_i \cup {\mathbb I}_{n-i-1}) \cup {\mathbb I}_{n-1} )(\beta_{n} \cup {\mathbb I}_{n-1} ){ \mathscr F'_{\bar{i}}}, {\mathscr L'_{\bar{i}}}\ggg\mid_{s}\\
\bar{\Omega}'(\beta^{-}_n)(x,d)&=(x^{-\frac{1}{2}}d^{-1})(xd^2)^{\frac{w(\beta_n)+(n-1)}{2}} \sum_{i_1,...,i_{n-1}=0}^{1} d^{-\sum_{k=1}^{n-1}i_k}\\
& \hspace{20mm} \lll (({\mathbb I}_{i-1} \cup \sigma_i \cup {\mathbb I}_{n-i-1}) \cup {\mathbb I}_{n-1} )(\beta_{n} \cup {\mathbb I}_{n-1} ){ \mathscr F'_{\bar{i}}}, {\mathscr L'_{\bar{i}}}\ggg\mid_{s}.
\end{aligned}
\end{equation}
For a multi-index $\bar{i}$, we have a common part which appears in all these intersections, namely: $$(\beta_{n} \cup {\mathbb I}_{n-1} ){ \mathscr F'_{\bar{i}}}|_{s}.$$
Let us denote the sum of the components of this multi-index by $w(\bar{i})=m$. Using a similar decomposition as the one presented in relation \eqref{coeff1} for $n-1$ instead of $n$ and the specialisation $s$, we remark that this homology class decomposes as a sum of homology classes with total weight $(n-1)$:
\begin{equation}
 (\beta_{n} \cup {\mathbb I}_{n-1} ){ \mathscr F'_{\bar{i}}|_{s}}=L^1_{2n-1,m}(\beta_{n} \cup {\mathbb I}_{n-1} ){ \mathscr F'_{\bar{i}}}=\sum_{\substack{\bar{j}=(j_1,...,j_{n})\in E^1_{n,m}}}\alpha_{\bar{j}} \cdot \mathscr U'_{\bar{j},1-\bar{i}}.
\end{equation}
Then, in order to compute $\bar{\Omega}'(\beta^{+}_n)$ and $\bar{\Omega}'(\beta^{-}_n)$, for each such $\bar{j}\in E^1_{n,m}$ we have to act with $\sigma^{\pm1}_i$, which gives the following homology class:
\begin{equation}
({\mathbb I}_{i-1} \cup \sigma_i^{\pm1} \cup {\mathbb I}_{n-i-1})\mathscr U'_{\bar{j},1-\bar{i}}.
\end{equation}
 This class will be a linear combination of homology classes which have the same indices as $\mathscr U'_{\bar{j},1-\bar{i}}$ except possibly the indices which are located in positions $i$ and $i+1$. Thus, in the three intersection forms that characterise a crossing change, the only difference occurs when we compute the intersections associated to the strands $i$ and $i+1$ with the $i$ and $i+1$ green circles (all the other terms are evaluated by the same scalar when we intersect with the dual class).

Based on this, we conclude that it would be enough to check that the skein relation holds for the intersections associated to any any multi-index $\bar{i}=(i_1,i_2)$ where $i_1,i_2 \in \{0,1\}$. They arise from the following geometric supports: 
\begin{itemize}
\item[•] I) $\mathscr U'_{0,0}$
\item[•] II) $\mathscr U'_{1,0}$
\item[•] III) $\mathscr U'_{0,1}$
\item[•] IV) $\mathscr U'_{1,1}$.
\end{itemize}
We are going to check the skein relation for each of these four cases. We will be using the models with arcs and circles, that give the intersections $\Omega'(\beta_n)$, $\Omega'(\beta^{+}_n)$ and $\Omega'(\beta^{-}_n)$. In this geometric picture, the four cases can be distinguished based on the number of components of an intersection point that are chosen on the right hand side of the picture. More precisely, each intersection point will have two components in the punctured disc. In picture \ref{Skein}, we denote the points in the punctured disc that belong to the right hand side of the disc by $x_0$ and $y_0$. The four cases correspond to the following situations:
\begin{itemize}
\item[•] I) intersection points which have both components $x_0$ and $y_0$  (so it is the uniqe point $(x_0,y_0)$).
\item[•] II) intersection points which have as component $x_0$ but not $y_0$.  
\item[•] III) intersection points which have as component $y_0$ but not $x_0$.
\item[•] IV) intersection points which do not have as component $y_0$ nor $x_0$.
\end{itemize}
 In figure \ref{Skein}, we present the three pictures that are associated to the intersections $\Omega(\mathbb I_2)$, $\Omega(\sigma)$ and $\Omega(\sigma^{-1})$. In the tables which are next to each picture we have: the intersection points, their associated gradings and we also indicate on the first row which of the four cases they belong to.
\begin{figure}[H]
\centering
$$\hspace{85mm} \bar{\Omega}'(\beta^{+}_n) $$
\hspace{78mm}{\renewcommand{\arraystretch}{1.7}\begin{tabular}{ | c | c | c | c | }
 \hline
I & II & II & IV  \\ 
\hline                                    
\hline
$(x_0,y_0)$ & $(x_0,y_1)$ & $(x_0,y_2)$ & $(x_1,y_3)$  \\ 
\hline  
$d^2$ & $d$ & $-dx^{-1}$ & $d^{-1}x^{-1}$ \\ 
\hline
\end{tabular}}
\vspace{10mm}

\includegraphics[scale=0.38]{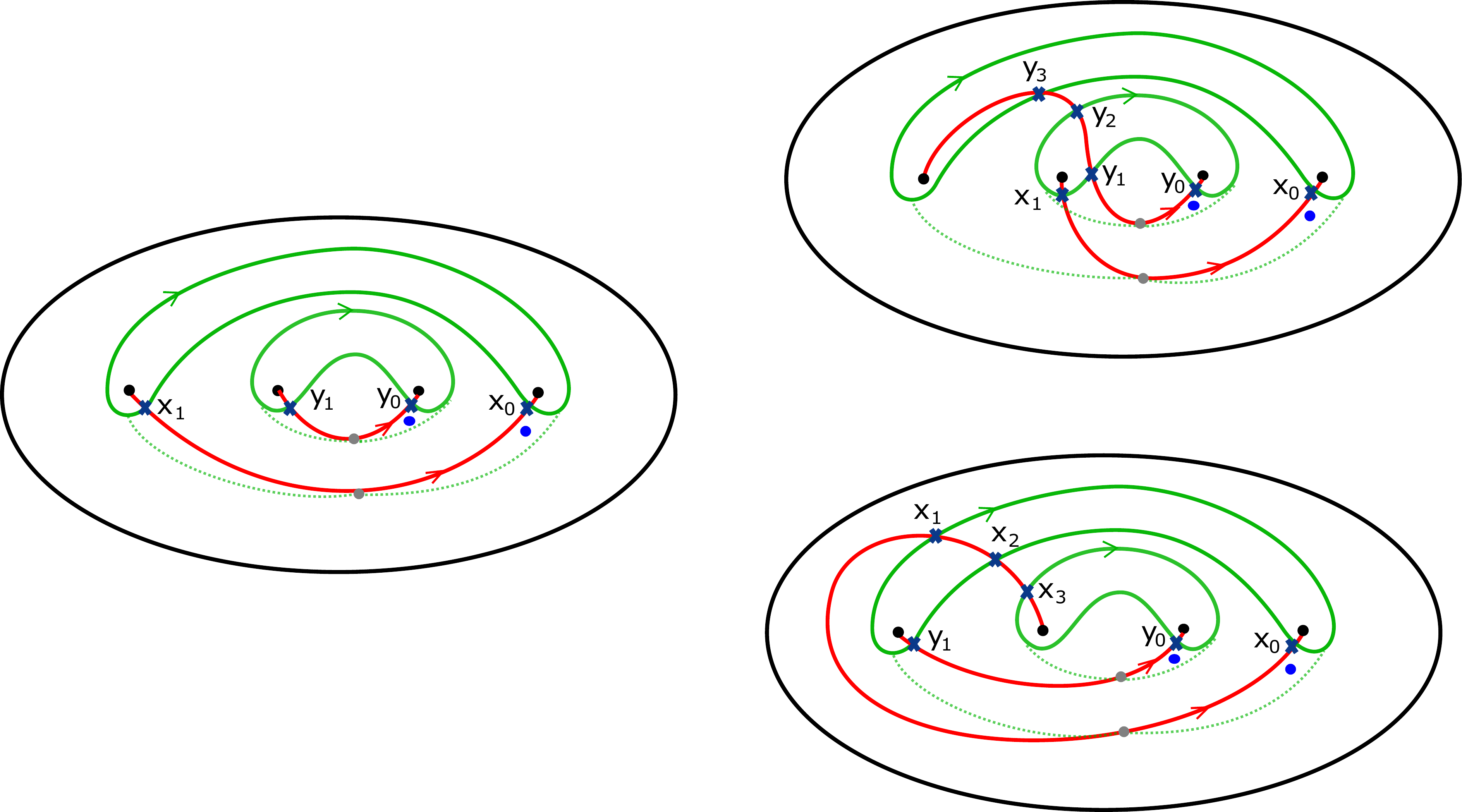}

\vspace{-27mm}
$$\hspace{-80mm} \bar{\Omega}'(\beta_n) $$
\vspace{15mm}
$$\hspace{85mm} \bar{\Omega}'(\beta^{-}_n) $$
\begin{center}
{\renewcommand{\arraystretch}{1.7}\begin{tabular}{ | c | c | c | c | }
 \hline
I & II & III & IV  \\ 
\hline                                    
\hline
$(x_0,y_0)$ & $(x_0,y_1)$ & $(x_1,y_0)$ & $(x_1,y_1)$  \\ 
\hline  
$d^2$ & $d$ & $d$ & $1$ \\ 
\hline
\end{tabular}}
\hspace{15mm}{\renewcommand{\arraystretch}{1.7}\begin{tabular}{ | c | c | c | c | }
 \hline
I & III & III & IV  \\ 
\hline                                    
\hline
$(x_0,y_0)$ & $(x_1,y_0)$ & $(x_2,y_0)$ & $(x_3,y_1)$  \\ 
\hline  
$d^2$ & $d$ & $-dx$ & $dx$ \\ 
\hline
\end{tabular}}
\end{center}
\hspace{-10mm}\caption{Skein relation}\label{Skein}
\end{figure}
Taking into account the extra framing contributions, which are $x^{\frac{1}{2}}d$ for $\bar{\Omega}'(\beta^{+}_n)$ and $x^{-\frac{1}{2}}d^{-1}$ for $\bar{\Omega}'(\beta^{-}_n)$, we obtain the following coefficients:
\begin{table}[H]
\hspace{10mm}{\renewcommand{\arraystretch}{1.9}\begin{tabular}{| C{2.5cm} | C{2cm} | C{2cm} | C{2cm} | C{2cm} | }
 \hline
& I & II & II & IV  \\ 
\hline                                    
\hline
$\bar{\Omega}'(\beta_n)$ &  $d^2$ & $d$ & $d$ & $1$  \\ 
\hline  
$\bar{\Omega}'(\beta^{+}_n)$&  $d^3x^{\frac{1}{2}}$ & $d^2(x^{\frac{1}{2}}-x^{-\frac{1}{2}})$ & & $x^{-\frac{1}{2}}$  \\ 
\hline  
$\bar{\Omega}'(\beta^{-}_n)$&  $dx^{-\frac{1}{2}}$ &  & $-(x^{\frac{1}{2}}-x^{-\frac{1}{2}})$ & $x^{\frac{1}{2}}$  \\ 
\hline  
\end{tabular}}
\caption{\label{Skein1}Coefficients of the intersections $\bar{\Omega}'(\beta_n), \bar{\Omega}'(\beta^{+}_n)$, $\bar{\Omega}'(\beta^{-}_n)$}
\end{table}
Let us denote:
$$\nu:=x^{\frac{1}{2}}-x^{-\frac{1}{2}}.$$
\subsubsection{Jones polynomial} Let us look at the specialisation $d=x^{-1}$ of these three intersection forms (denoted by $\psi_J$, as in Definition \ref{N}). We check the skein relation where we multiply the coefficients associated to $\beta_n^{+}$ by $x$, the ones associated to $\beta_n^{+}$ by $x^{-1}$ and the ones for $\beta_n$ by $\nu$. Doing this using Table \ref{Skein1}, we obtain the following coefficients:
\begin{table}[H]
\hspace{10mm}{\renewcommand{\arraystretch}{1.9}\begin{tabular}{| C{2.5cm} | C{2cm} | C{2cm} | C{2cm} | C{2cm} | }
 \hline
& I & II & II & IV  \\ 
\hline                                    
\hline
$\nu \cdot \bar{\Omega}'(\beta_n)|_{\psi_J}$ &  $\nu x^{-2}$ & $\nu x^{-1}$ & $\nu x^{-1}$ & $\nu$  \\ 
\hline  
$x \cdot \bar{\Omega}'(\beta^+_n)|_{\psi_J}$&  $x^{-\frac{3}{2}}$ & $\nu x^{-1}$ & & $x^{\frac{1}{2}}$  \\ 
\hline  
$x^{-1} \cdot \bar{\Omega}'(\beta^-_n)|_{\psi_J}$&  $x^{-\frac{5}{2}}$ &  & $\nu x^{-1}$ & $-x^{-\frac{1}{2}}$  \\ 
\hline  
\end{tabular}}
\caption{\label{Skein2}Coefficients of the skein relation for the specialisation $\psi_J$}
\end{table}
Following the columns of this table, we see that the skein relation holds for each of the four cases I, II, III and IV. Based on this, we conclude that the link invariant $\bar{\Omega}'|_{\psi_J}(L)$ satisfies the skein relation:
\begin{equation}\label{skeinJ}
x \cdot \bar{\Omega}'|_{\psi_J}(L_{+})-x^{-1} \cdot \bar{\Omega}'|_{\psi_J}(L_{-})= 
\left( x^{\frac{1}{2}}-x^{-\frac{1}{2}} \right) \bar{\Omega}'|_{\psi_J}(L).
\end{equation}
Also, for the unknot we have: $$\bar{\Omega}'(\mathscr U)=1.$$
\begin{rmk}
If we start with the skein relation \eqref{skeinJ} and change the variable $x=q^{2}$ we obtain:
\begin{equation*}
q^{2} \cdot \bar{\Omega}'|_{\psi_J}(L_{+})-q^{-2} \cdot \bar{\Omega}'|_{\psi_J}(L_{-})= 
\left( q-q^{-1} \right) \bar{\Omega}'|_{\psi_J}(L).
\end{equation*}
Next, if we change the variable $q$ to $-q^{-1}$ we obtain the usual version of the skein relation for the Jones polynomial:
\begin{equation*}
q^{-2} \cdot \bar{\Omega}'|_{\psi_J}(L_{+})-q^{2} \cdot \bar{\Omega}'|_{\psi_J}(L_{-})= 
\left( q-q^{-1} \right) \bar{\Omega}'|_{\psi_J}(L).
\end{equation*}

\end{rmk}
This shows that the specialisation $\psi_J$ of the intersection form gives the normalised Jones polynomial:
\begin{equation}
\bar{\Omega}'|_{d=x^{-1}}(L,x)=\tilde{J}(L,x).
\end{equation}

\subsubsection{Alexander polynomial} In the next part we look at the specialisation $\psi_\Delta$, given by $d=-1$. We multiply the coefficients associated to $\beta_n$ by $-\nu$ and the ones associated to $\beta_n^{-}$ by $-1$ and we get:
\begin{table}[H]
\hspace{10mm}{\renewcommand{\arraystretch}{1.9}\begin{tabular}{| C{2.5cm} | C{2cm} | C{2cm} | C{2cm} | C{2cm} | }
 \hline
& I & II & II & IV  \\ 
\hline                                    
\hline
$(-\nu) \cdot \bar{\Omega}'(\beta_n)|_{\psi_{\Delta}}$ &  $-\nu$ & $\nu$ & $\nu$ & $-\nu$  \\ 
\hline  
$\bar{\Omega}'(\beta^+_n)|_{\psi_{\Delta}}$&  $-x^{\frac{1}{2}}$ & $\nu$ & & $x^{-\frac{1}{2}}$  \\ 
\hline  
-$\bar{\Omega}'(\beta^-_n)|_{\psi_{\Delta}}$&  $x^{-\frac{1}{2}}$ &  & $\nu$ & $-x^{\frac{1}{2}}$  \\ 
\hline  
\end{tabular}}
\caption{\label{Skein3}Coefficients of the skein relation for the specialisation $\psi_\Delta$}
\end{table}
This shows us that the associated skein relation is true in all four cases I, II, III, IV, so we have:
\begin{equation}\label{skeinA}
\bar{\Omega}'|_{\psi_{\Delta}}(L_{+})- \bar{\Omega}'|_{\psi_{\Delta}}(L_{-})= 
\left( x^{-\frac{1}{2}}-x^{\frac{1}{2}} \right) \bar{\Omega}'|_{\psi_{\Delta}}(L).
\end{equation}
\begin{rmk}
The skein relation \eqref{skeinA} with the change of variable $x$ to $-x$ gives the usual skein relation for the Alexander polynomial:
\begin{equation*}
\bar{\Omega}'|_{\psi_{\Delta}}(L_{-})- \bar{\Omega}'|_{\psi_{\Delta}}(L_{+})= 
\left( x^{\frac{1}{2}}-x^{-\frac{1}{2}} \right) \bar{\Omega}'|_{\psi_{\Delta}}(L).
\end{equation*}
\end{rmk}
Since for the unknot $\bar{\Omega}'(\mathscr U)=1$, we conclude that the specialisation $\psi_{\Delta}$ of the intersection form gives the Alexander polynomial:
\begin{equation}
\bar{\Omega}'|_{d=-1}(L)(x)=\Delta(L,x).
\end{equation}
This concludes the relations from Theorem \ref{Tsk}.
\subsection{Closed intersection model} For the closed intersection form, the computation of the relation that is satisfied by a crossing change follows with the same argument as the one for the open intersection form, and we obtain:
\begin{equation}\label{skeinJJ}
x \cdot \bar{\Omega}|_{\psi_J}(L_{+})-x^{-1} \cdot \bar{\Omega}|_{\psi_J}(L_{-})= 
\left( x^{\frac{1}{2}}-x^{-\frac{1}{2}} \right) \bar{\Omega}|_{\psi_J}(L)
\end{equation}
\begin{equation}\label{skeinAA}
\bar{\Omega}|_{\psi_{\Delta}}(L_{+})- \bar{\Omega}|_{\psi_{\Delta}}(L_{-})= 
\left( x^{-\frac{1}{2}}-x^{\frac{1}{2}} \right) \bar{\Omega}|_{\psi_{\Delta}}(L).
\end{equation}
The difference comes from the evaluation of this invariant on the unknot. 
More precisely, in section \ref{S:3} we computed the value of $\Omega$ on the trivial braid:
\begin{equation*}
\Omega(\mathbb I_1)=x^{\frac{1}{2}}(1+d) \in \Z[x^{\pm\frac{1}{2}},d^{\pm1}].
\end{equation*}
This means that the invariant $\bar{\Omega}$ evaluated on the unknot $\mathscr U$ is:
\begin{equation}\label{eq:v}
\bar{\Omega}(\mathscr U)=x^{\frac{1}{2}}(1+d) \in \Z[x^{\pm \frac{1}{2}}, d^{\pm 1}]/\left((d+1)(dx-1) \right).
\end{equation}
Thus, if we look at the specialisation $d=x^{-1}$ it has the normalisation:
\begin{equation*}
\bar{\Omega}|_{\psi_J}(\mathscr U)=x^{\frac{1}{2}}(1+x^{-1})= x^{\frac{1}{2}}+x^{-\frac{1}{2}} \in \Z[x^{\pm \frac{1}{2}}].
\end{equation*}
This relation together with the skein relation \ref{skeinJJ} concludes that the $\psi_J$ specialisation of the closed intersection form is the un-normalised Jones polynomial:  
\begin{equation}
\bar{\Omega}|_{\psi_J}(L)(x)=J(L,x). 
\end{equation}
On the other hand, following equation \eqref{eq:v} we remark that the $\psi_{\Delta}$ specialisation ($d=-1$) of the closed intersection form vanishes for the unknot:
\begin{equation*}
\bar{\Omega}|_{\psi_\Delta}(\mathscr U)=0.
\end{equation*}
On the other hand, it satisfies the skein relation from \eqref{skeinAA}. This shows that the specialisation $\psi_\Delta$ of this invariant vanishes for any link
\begin{equation}
\bar{\Omega}|_{\psi_\Delta}(L)=0
\end{equation}
and concludes the statement from Theorem \ref{Tsk'}.
\section{Formulas for these invariants as interpolations of Jones and Alexander polynomials} \label{S:5}
This section arose from joint discussions with Rinat Kashaev, and I would like to thank him  for this. In this part, we consider algebraic varieties which are quotients of the Laurent polynomial ring by the product of two irreducible factors without multiplicity. Then, if we have an invariant taking values in this algebraic variety, and we consider its specialisations associated to the two irreducible factors then this invariant in the variety is forced to be an interpolation between these two specialisations. Let us make it precise for our cases. 

So far we have the intersection form $\bar{\Omega}$ which we know that it is a link invariant. We want to describe the precise form of this invariant. We will see that the fact that $\bar{\Omega}(\beta_n)(x,d)$ recovers the Jones polynomial and vanishes through the second specialisation of coefficients forces it to be a multiple of the Jones polynomial.

On the other hand, something interesting happens with the open intersection form. In the second part of this section we will see that the fact that $\bar{\Omega}'(\beta_n)(x,d)$ is an element in the quotient ring which recovers the Jones and Alexander polynomials through the two specialisations forces it to be a specific interpolation between the Jones and Alexander polynomials. 

\subsection{The closed model in the quotient ring} Following Theorem \ref{Tsk} and Definition \ref{N} we have:
\begin{center}
\begin{tikzpicture}
[x=1.2mm,y=1.4mm]

\node (b1)  at   (44,0)  {$0 \in \Z[x^{\pm 1}]$};
\node (b5)  at  (0,0) {$J(L)(x) \in \Z[x^{\pm \frac{1}{2}}]$};
\node (b4) at (22,15)   {$\bar{\Omega}(L)(x,d) \in \Z[x^{\pm \frac{1}{2}}, d^{\pm 1}]/\left((d+1)(dx-1) \right)$};
\node (s1)  at   (0,6)  {$d=x^{-1}$};
\node (s2)  at   (44,6)  {$d=-1$};
\draw[<-]  (b1)      to node [left,yshift=-2mm,font=\large]{$\psi_{\Delta}$}   (b4);
\draw[<-]   (b5) to node [right,yshift=-2mm,font=\large] {$\psi_{J}$}                        (b4);
\end{tikzpicture}
\end{center}
The specialisation $\psi_\Delta$ gives $\bar{\Omega}(L)(x,d)|_{d=-1}=0$, so there exists $B(x,d) \in \LL$ such that:
\begin{equation}
\bar{\Omega}(L)(x,d)=(d+1) \cdot B(x,d).
\end{equation}
On the other hand the specialisation $\psi_J$ gives $\bar{\Omega}(L)(x,d)|_{d=x^{-1}}=J(L)(x)$. Using this property combined with the above relation we have:
\begin{equation*}
J(L)(x)=(x^{-1}+1) \cdot B(x,d)|_{d=x^{-1}}.
\end{equation*}
In other words, in the ring $\Z[x^{\pm \frac{1}{2}}]$ we have:
\begin{equation*}
\frac{J(L)(x)}{(x^{-1}+1)}=B(x,d)|_{d=x^{-1}}.
\end{equation*}
This shows that there exists $B'(x,d) \in \LL$ with the property:
\begin{equation}
B(x,d)=\frac{J(L)(x)}{(x^{-1}+1)}+B'(x,d) \cdot (xd-1).
\end{equation}
This implies that that in the quotient ring we have:
\begin{equation*}
\bar{\Omega}(L)(x,d)=(d+1) \cdot \frac{J(L)(x)}{(x^{-1}+1)}+B'(x,d) \cdot (d+1)(xd-1)=(d+1) \cdot \frac{J(L)(x)}{(x^{-1}+1)}.
\end{equation*}

Using the normalised version of the Jones polynomial we obtain:
\begin{equation*}
\bar{\Omega}(L)(x,d)=x^{\frac{1}{2}}(d+1) \cdot \tilde{J}(L)(x).
\end{equation*}

This concludes the relation from Theorem \ref{THEOREM3}.

\subsection{The open model in the quotient ring}
In this part we study which information we get from the fact that the open intersection form recovers the Jones and Alexander polynomials of the closure. Let $L$ be a link and we choose a braid representative $\beta_n\in B_n$. Then, following Theorem \ref{Tsk} we have :
\begin{center}
\begin{tikzpicture}
[x=1.2mm,y=1.4mm]

\node (b1)  at   (44,0)  {$\Delta(L)(x) \in \Z[x^{\pm 1}]$};
\node (b5)  at  (0,0) {$\tilde{J}(L)(x) \in \Z[x^{\pm \frac{1}{2}}]$};
\node (b4) at (22,15)   {$\bar{\Omega}'(\beta_n)(x,d) \in \Z[x^{\pm \frac{1}{2}}, d^{\pm 1}]/\left((d+1)(dx-1) \right)$};
\node (s1)  at   (0,6)  {$d=x^{-1}$};
\node (s2)  at   (44,6)  {$d=-1$};
\draw[<-]  (b1)      to node [left,yshift=-2mm,font=\large]{$\psi_{\Delta}$}   (b4);
\draw[<-]   (b5) to node [right,yshift=-2mm,font=\large] {$\psi_{J}$}                        (b4);
\end{tikzpicture}
\end{center}

We start with the specialisation $\psi_\Delta$ and we know:
\begin{equation}
\bar{\Omega}'(\beta_n)(x,d)|_{d=-1}=\Delta(L)(x).
\end{equation}
Then this means that the difference between the intersection form and the Alexander polynomial is in the kernel of the specialisation. More precisely, there exists $A(x,d)\in \LL$ such that:
\begin{equation*}
\bar{\Omega}'(\beta_n)(x,d)-\Delta(L)(x)=A(x,d) \cdot (d+1).
\end{equation*}
This is equivalent to:
\begin{equation}\label{rel1}
\bar{\Omega}'(\beta_n)(x,d)=\Delta(L)(x)+A(x,d) \cdot (d+1).
\end{equation}
Now, we look at the specialisation $\psi_J$ and we have $\bar{\Omega}'(\beta_n)(x,d)|_{d=x^{-1}}=\tilde{J}(L)(x)$, so:
\begin{equation*}
\tilde{J}(L)(x)=\Delta(L)(x)+A(x,d)|_{d=x^{-1}} \cdot (x^{-1}+1).
\end{equation*}
This shows that:
\begin{equation}
A(x,d)|_{d=x^{-1}}=\frac{\tilde{J}(L)(x)-\Delta(L)(x)}{(x^{-1}+1)}.
\end{equation}
This implies that there exists $A'(x,d)\in \LL$ such that:
\begin{equation*}
A(x,d)-\frac{\tilde{J}(L)(x)-\Delta(L)(x)}{(x^{-1}+1)}=A'(x,d) \cdot (xd-1).
\end{equation*}
The last relation gives:
\begin{equation}
A(x,d)=\frac{\tilde{J}(L)(x)-\Delta(L)(x)}{(x^{-1}+1)}+A'(x,d) \cdot (xd-1).
\end{equation}
Following the last relation and equation \eqref{rel1} we have:
\begin{equation*}
\bar{\Omega}'(\beta_n)(x,d)=\Delta(L)(x)+(d+1)\left(\frac{\tilde{J}(L)(x)-\Delta(L)(x)}{(x^{-1}+1)}+A'(x,d) \cdot (xd-1)\right).
\end{equation*}
This shows that the open intersection form in the quotient ring is the following interpolation:
\begin{equation}
\bar{\Omega}'(\beta_n)(x,d)=\Delta(L)(x)+(d+1)\frac{\tilde{J}(L)(x)-\Delta(L)(x)}{(x^{-1}+1)}.
\end{equation}
So $\bar{\Omega}'(\beta_n)$ is a well-defined link invariant, denoted by $\bar{\Omega}'(L)$ which has the formula:
\begin{equation}
\bar{\Omega}'(L)(x,d)=\Delta(L)(x)+(d+1)\frac{\tilde{J}(L)(x)-\Delta(L)(x)}{(x^{-1}+1)}.
\end{equation}
This concludes the statement of Theorem \ref{THEOREM2}.
\section{Example of computation}\label{S:6}
\subsection{Trefoil knot} Let us compute the intersection model for the trefoil knot $T$, seen as the closure of the braid $\sigma^3 \in B_2$. We have the following curves in the punctured disc $\mathscr D_5$:
 $$ (\sigma^3 \cup \mathbb I_{3})\mathscr S'\cap\mathscr T'$$
\vspace{-10mm}
\begin{center}
\begin{figure}[H]
\centering
\includegraphics[scale=0.6]{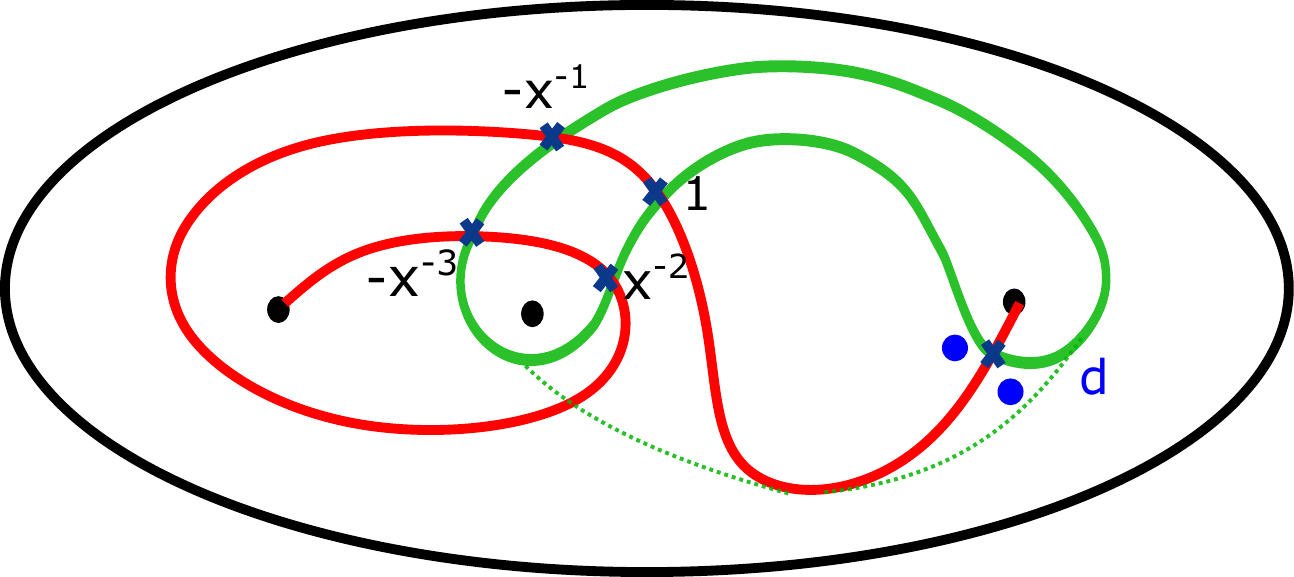}
\caption{Trefoil knot}
\end{figure}
\end{center}
\vspace{-5mm}
Then, we compute the gradings of the $5$ intersection points as in the above picture and we obtain:
\begin{equation}
\begin{aligned}
&\Omega'(\sigma^{3})(x,d)=x^2d^3
\left( -x^{-3}+x^{-2}-x^{-1}+1+d\right).
\end{aligned}
\end{equation}
In the next part, we compute the formula for this intersection in the quotient ring:
$$\bar{\Omega}'(T)(x,d) \in \Z[x^{\pm \frac{1}{2}}, d^{\pm 1}]/\left( (d+1)(dx-1)\right).$$
Replacing $d^3$ in terms of the basis using relation \eqref{relations} we obtain:
\begin{equation}
\begin{aligned}
&\bar{\Omega}'(T)(x,d)=x^2\left((x^{-2}-x^{-1}+1)d+x^2-x^{-1}\right)
\left( -x^{-3}+x^{-2}-x^{-1}+1+d\right).
\end{aligned}
\end{equation}
Now, we prove that this expression is an interpolation between the Jones and Alexander invariants of the trefoil knot, which have the following formulas:
\begin{equation}
\begin{aligned}
&\Delta(T,x)=x-1+x^{-1}\\
&J(T,q)=-x^{-4}+x^{-1}+x^{-3}.
\end{aligned}
\end{equation}
This means that we have:
\begin{equation}
\begin{aligned}
\bar{\Omega}'(T)(x,d)&=\left((\Delta(T)(x) \cdot xd-x+1\right)
\left( -x^{-3}+x^{-2}-x^{-1}+1+d\right)=\\
&=\left( (-x^{-2}+x^{-1}-1+x)d+xd^2\right)\Delta(T)(x)+\\
& \ \ \ +x^{-2}-x^{-1}+1-x-dx-x^{-3}+x^{-2}-x^{-1}+1+d.
\end{aligned}
\end{equation}
Replacing $d^2$ using formula \eqref{relations} we obtain: 
\begin{equation}
\begin{aligned}
\bar{\Omega}'(T)(x,d)&=\Delta(T)(x)+d(1-x^{-1})(1-x^{-1}+x^{-2}-x)+\\
&\ \ \ +(1-x^{-1})(1-x^{-1}+x^{-2}-x)=\\
&=\Delta(T)(x)+(d+1)(1-x^{-1})(1-x)(1+x^{-2}).
\end{aligned}
\end{equation}
On the other hand, the difference between Jones and Alexander polynomials of the trefoil has the expression:
\begin{equation*}
\tilde{J}(T)(x)-\Delta(T)(x)=(1+x^{-1})(1-x^{-1})(1-x)(1+x^{-2}).
\end{equation*}
From the previous two relations we conclude the interpolation model:
\begin{equation}
\bar{\Omega}'(T)(x,d)=\Delta(T)(x)+(d+1) \cdot \frac{\tilde{J}(T)(x)-\Delta(T)(x)}{(x^{-1}+1)}.
\end{equation}

\noindent {\itshape University of Geneva, Switzerland}

\noindent {\tt Cristina.Palmer-Anghel@unige.ch}

\noindent \href{http://www.cristinaanghel.ro/}{www.cristinaanghel.ro}


\begin{thebibliography}{99}
\bibitem {ADO} Y. Akustu, T. Deguchi, T. Ohtsuki -{ \em  Invariants of colored links}, J. Knot Theory Ramifications 1  161–184, (1992).
\bibitem{Cr} C. Anghel-{ \em $U_q(sl(2))$-quantum invariants from an intersection of two Lagrangians in a symmetric power of a surface}, arxiv.org/abs/2111.01125, 26 pages, (2021).
\bibitem{Cr2} C. Anghel-{ \em $U_q(sl(2))-$quantum invariants unified via intersections of embedded Lagrangians}, math.GT arxiv: 2010.05890, 26 pages, (2020).
\bibitem{Cr1} C. Anghel-{ \em Coloured Jones and Alexander polynomials as topological intersections of cycles in configuration spaces}, math.GT arXiv:2002.09390, 47 pages, (2020)
\bibitem{CrM} C. Anghel, M. Palmer- {\em Lawrence-Bigelow representations, bases and duality}, math.GT arXiv:2011.02388 (25 pages) (2020).
\bibitem{D} N. Dowlin- {\em A spectral sequence from Khovanov homology to knot Floer homology}, math.GT arXiv:1811.07848v1, 44 pages, (2018).
\bibitem{Big}  Stephen Bigelow - {A homological definition of the Jones polynomial.} In Invariants of knots and 3-manifolds (Kyoto, 2001), volume 4 of Geom. Topol. Monogr., pages 29-41. Geom. Topol. Publ., Coventry, (2002).
\bibitem{Fathi} F. B. Aribi- {\em Link invariants from L2-Burau maps of braids}, 24 pages, arXiv 2101.01678v4 (2021)
\bibitem{RT} N. Reshetikhin, V. Turaev- {\em Invariants of 3-manifolds via link polynomials and quantum groups}, Invent. Math. 103, 547-597, (1991).




\bibitem{Law}  R. J. Lawrence - {A functorial approach to the one-variable Jones polynomial.} J. Differential Geom., 37(3):689-710, (1993).
\bibitem{Ras} J. Rasmussen- {\em Some differentials on Khovanov-Rozansky homology}, Geom. Topol. 19, 3031-3104, (2015).
\bibitem {SM} P. Seidel, I. Smith- {\em A link invariant from the symplectic geometry of nilpotent slices}. Duke Math. J.134:453-514, (2006).

\end{thebibliography}
\end{document}